\documentclass[12pt,reqno]{amsart}

\setlength{\columnseprule}{0.4pt}
\setlength{\topmargin}{0cm}
\setlength{\oddsidemargin}{.25cm}
\setlength{\evensidemargin}{.25cm}
\setlength{\textheight}{22.5cm}
\setlength{\textwidth}{15.5cm}

\usepackage{amsfonts,amsmath,amsthm}
\usepackage{amssymb,epsfig}
\usepackage{enumerate} 

\usepackage[utf8]{inputenc}
\usepackage[euler-digits]{eulervm}
\usepackage{eucal}
\usepackage[unicode=true]{hyperref}
\hypersetup{colorlinks = true,pdftex}
\hypersetup{
     colorlinks,
     linkcolor={black!50!black},
     citecolor={blue}
}


\usepackage{pdfsync}

\usepackage{geometry}
\geometry{verbose,tmargin=2.5cm,bmargin=2.5cm,lmargin=2.5cm,rmargin=2.5cm,headheight=2.5cm}

\usepackage{graphicx}
\usepackage{epsfig}
\usepackage{tikz}
\usepackage{caption}
\usepackage{color} 
\definecolor{vert}{rgb}{0,0.6,0}

\usepackage{comment}
\numberwithin{figure}{section}

\theoremstyle{plain}
\newtheorem{thm}{Theorem}[section]

\newtheorem{defn}{Definition}
\newtheorem{quest}{Question}

\newtheorem{ex}{Example}
\newtheorem{lem}[thm]{Lemma}
\newtheorem{cor}[thm]{Corollary}
\newtheorem{prop}[thm]{Proposition}
\theoremstyle{remark}
\newtheorem{rem}{\bf{Remark}}
\numberwithin{equation}{section}



\newcommand{\N}{\mathbb{N}}

\newcommand{\R}{\mathbb{R}}


\newcommand{\BUC}{{\rm BUC\,}}

\newcommand{\Lip}{{\rm Lip\,}}


\newcommand{\al}{\alpha}

\newcommand{\ol}{\overline}




\begin{document}

\title[Rate of convergence]
{State-constraint static Hamilton-Jacobi equations in nested domains}

\thanks{The authors were supported in part by NSF grant DMS-1664424.
The second and the third authors are supported in part by NSF CAREER grant DMS-1843320.}

\begin{abstract}
We study state-constraint static Hamilton--Jacobi equations in a  sequence of domains $\{\Omega_k\}_{k \in \N}$ in $\R^n$ such that $\Omega_k \subset \Omega_{k+1}$ for all $k\in \N$.
We obtain rates of convergence of $u_k$, the solution to the state-constraint problem in $\Omega_k$, to $u$, the solution to the corresponding problem in $\Omega =  \bigcup_{k \in \N} \Omega_k$.
In many cases,  the rates obtained are proven to be optimal.
Various new examples and discussions are provided at the end of the paper.
\end{abstract}

\author{Yeoneung Kim}
\address[Y. Kim]
{
Department of Mathematics, 
University of Wisconsin Madison, 480 Lincoln  Drive, Madison, WI 53706, USA}
\email{yeonkim@math.wisc.edu}

\author{Hung V. Tran}
\address[H. V. Tran]
{
Department of Mathematics, 
University of Wisconsin Madison, 480 Lincoln  Drive, Madison, WI 53706, USA}
\email{hung@math.wisc.edu}

\author{Son N. T. Tu}
\address[S. N.T. Tu]
{
Department of Mathematics, 
University of Wisconsin Madison, 480 Lincoln  Drive, Madison, WI 53706, USA}
\email{thaison@math.wisc.edu}

\keywords{first-order Hamilton--Jacobi equations; state-constraint problems; optimal control theory; rate of convergence; viscosity solutions.}
\subjclass[2010]{
35B40, 
35D40, 
49J20, 
49L25, 
70H20 
}

\maketitle



\section{Introduction}\label{sec:intro}
Let $\{\Omega_k\}_{k \in \N}$ be a sequence of domains in $\R^n$ such that $\Omega_k \subset \Omega_{k+1}$ for all $k\in \N$.
We say that $\{\Omega_k\}_{k \in \N}$ is a sequence of nested domains.
Then, $\Omega = \bigcup_{k \in \N} \Omega_k$ is also a domain in $\R^n$.
Let $H:\ol{\Omega} \times \mathbb{R}^n\rightarrow \mathbb{R}$ be a given continuous Hamiltonian. In this paper, we are interested in studying state-constraint solutions to the following static Hamilton-Jacobi equations:
\begin{equation}\label{HJ-static-k}
u_k(x) + H\big(x,Du_k(x)\big) = 0 \qquad\text{in}\; \Omega_k, \tag{HJ$_k$}
\end{equation}
and
\begin{equation}\label{HJ-static}
u(x) + H\big(x,Du(x)\big) = 0 \qquad\text{in}\; \Omega \tag{HJ}.
\end{equation}
The precise definition of state-constraint viscosity solutions is given in Section \ref{sec:prelim}. Under some appropriate conditions, \eqref{HJ-static-k} has a unique  state-constraint viscosity solution $u_k \in \mathrm{C}(\ol \Omega_k)$ for each $k \in \N$,
and  \eqref{HJ-static} has a unique  state-constraint viscosity solution $u \in \mathrm{C}(\ol \Omega)$.
Furthermore, by a priori estimates and the stability results of viscosity solutions, we have that $u_k \to u$ locally uniformly on $\overline{\Omega}$.
Our main focus here is to study how fast this convergence is in two different types of nested domains.

\subsection{Assumptions} 
In the paper, we consider the following two prototypes of nested domains, which are
\begin{itemize}
\item[(P1)] $\Omega_k = B(0,k)$ and $\Omega = \bigcup_{ k \in \N} B(0,k) = \R^n$,
\item[(P2)] $\Omega_k = B(0,1-\frac{1}{k})$, and $\Omega = B(0,1)$. 
\end{itemize}
We list the main assumptions that will be used throughout the paper. 
\begin{itemize}
\item[(H1)] There exists $C_1>0$ such that
\begin{equation}\label{H0}
H(x,p) \geq -C_1 \qquad \text{ for all } (x,p)\in \overline{\Omega}\times \mathbb{R}^n. \tag{H1}
\end{equation}

\item[(H2)] There exists $C_2>0$ such that
\begin{equation}\label{H1}
\sup_{x\in \overline{\Omega}} |H(x,0)| \leq C_2. \tag{H2}
\end{equation}
\item[(H3a)] There exists a modulus $\omega_H:[0,\infty) \rightarrow [0,\infty)$, which is a nondecreasing function satisfying $\omega_H(0^+) = 0$ and
\begin{equation}\label{H3a}
\begin{cases}
|H(x,p) - H(y,p)| \leq \omega_H\big(|x-y|(1+|p|)\big),\\
|H(x,p) - H(x,q)| \leq \omega_H(|p-q|),
\end{cases} \tag{H3a}
\end{equation}
for $x,y \in \overline{\Omega}$ and $p,q\in \mathbb{R}^n$.

\item[(H3b)] For every $R>0$, there exists a modulus $\omega_R:[0,+\infty) \rightarrow [0,+\infty)$, which is nondecreasing with $\omega_R(0^+) = 0$ and
\begin{equation}\label{H3b}
\begin{cases}
|H(x,p) - H(y,p)| \leq \omega_R\big(|x-y|),\\
|H(x,p) - H(x,q)| \leq \omega_R(|p-q|),
\end{cases} \tag{H3b}
\end{equation}
for $x,y \in \overline{\Omega}$ and $p,q\in \mathbb{R}^n$ with $|p|,|q|\leq R$.
\item[(H3c)] For each $R>0$ there exists a constant $C_R$ such that
\begin{equation}\label{H3c}
\begin{cases}
|H(x,p) - H(y,p)| \leq C_R|x-y|,\\
|H(x,p) - H(x,q)| \leq C_R|p-q|, 
\end{cases} \tag{H3c}
\end{equation}
for $x,y\in \overline{\Omega}$ and $p,q \in \mathbb{R}^n$ with $|p|,|q|\leq R$.

\item[(H4)] $H$ satisfies the coercivity assumption
\begin{equation}\label{H4}
\lim_{|p|\rightarrow \infty} \left(\inf_{x\in \overline{\Omega}} H(x,p)\right) = +\infty. \tag{H4}
\end{equation}
\item[(H5)] $p\mapsto H(x,p)$ is convex for each $x\in\overline{\Omega}$.
\end{itemize}
Let us give some quick comments on the assumptions here.
Assumption \eqref{H0} is necessary to ask a meaningful question about the rate of convergence of $u_k$ to $u$.
See the discussion in Section \ref{Sec:discussion} in case where \eqref{H0} fails to hold.
Besides, it is clear that \eqref{H3b} is weaker than both \eqref{H3a} and \eqref{H3c}.

\subsection{Main results}
There have been many works in the literature on the well-posedness of state-constraint Hamilton-Jacobi equations and fully nonlinear elliptic equations.
The state-constraint problem for first-order convex Hamilton-Jacobi equations using optimal control frameworks was first studied in \cite{Soner1986, Soner1986a}.
The general nonconvex, coercive first-order equations was then discussed in \cite{Capuzzo-Dolcetta1990}.
For further developments in using optimal control formulation and obtaining optimal paths, we refer the readers to \cite{ Ishii1996,  Kim2000, ClarkeFrancisH.2002, CarlierGuillaume2010, BokanowskiOlivier2011, AltaroviciAlbert2013,Frankowska2013, Sedrakyan2016, WilliamH.Sandholm2018} for the finite dimensional cases, and \cite{Cannarsa1991, Kocan1998} for the infinite dimensional cases. 
See \cite{Camilli1996} for discrete numerical schemes, and \cite{Mitake2008} for large time behavior results.
We also refer to the classical books \cite{Barles1994, Bardi1997} and the references therein.

The state-constraint problem for second-order equations was first studied in \cite{Lasry1989} for the Laplacian, and in \cite{Armstrong2015a} for the general possibly degenerate diffusion matrices.
Boundary behavior of blow-up solutions was discussed  in \cite{Lasry1989, Leonori2008, Armstrong2015a}. 
Convex solutions with state-constraint boundary were constructed in \cite{Alvarez1997, Feldman2007}.
The convergence of solutions to the vanishing discount problems was proved in \cite{Ishii2017a}.

In terms of state-constraint problems in nested domains, up to our knowledge, there are only qualitative results in the literature in \cite{Capuzzo-Dolcetta1990, Armstrong2015a} where certain approximations were needed for the analysis of solutions. 
We provide here some first quantitative results on the rate of convergence of the solutions to \eqref{HJ-static-k} as $k$ goes to infinity in two different types ((P1) or (P2)) of nested domains.

\medskip

First of all, we show that the rate of convergence is $\mathcal{O}\left(\frac{1}{k^2}\right)$ for the prototype (P1) for general nonconvex Hamiltonians.

\begin{thm}\label{thm:approx1/k^2} 
Under the assumptions $\mathrm{(P1)}$, \eqref{H0}, \eqref{H1}, \eqref{H3c}, and \eqref{H4}, we have
\begin{itemize}
\item[(i)] $u(x) \leq u_k(x)$ for every $x\in \overline{B(0,k)}$,
\item[(ii)] there exists a constant $C>0$ depending only on $H$ such that
\begin{equation*}
0\leq u_k(x) - u(x) \leq \frac{C(1+|x|^2)}{k^{2}}
\end{equation*}
for $k \in \N$ and $x\in \overline{B(0,k)}$.
\end{itemize}
In particular, for any fixed $R>0$ and $|x|\leq R$,
\[
0\leq u_k(x) - u(x) \leq \frac{C(1+R^2)}{k^{2}}.
\]
\end{thm}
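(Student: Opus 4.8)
The plan is to derive both parts from the comparison principles for \eqref{HJ-static-k} recalled in Section~\ref{sec:prelim}; the only genuine work is to construct a quadratic perturbation of $u$ that serves as a supersolution of \eqref{HJ-static-k} inside $\Omega_k$, together with the obvious ordering of the two solutions on $\partial\Omega_k$.

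\emph{Part (i).} Since $u$ solves \eqref{HJ-static} on $\R^n$, its restriction to $\Omega_k$ is a subsolution of \eqref{HJ-static-k} in $\Omega_k$, while $u_k$ — being the state-constraint solution — is in particular a state-constraint supersolution on all of $\overline{\Omega_k}$. The comparison principle for the state-constraint problem then yields $u\le u_k$ on $\overline{\Omega_k}$.

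\emph{Part (ii).} I would first record two a priori estimates. Comparing $u$ and each $u_k$ with the constant sub/supersolutions $\pm C_2$ (admissible by \eqref{H1}) gives the uniform bounds $\|u\|_{L^\infty(\R^n)}\le C_2$ and $\|u_k\|_{L^\infty(\overline{\Omega_k})}\le C_2$. Next, by the coercivity \eqref{H4} there is a radius $R_0$, depending only on $H$, such that every viscosity subgradient of any subsolution of $w+H(x,Dw)=0$ lies in $\overline{B(0,R_0)}$; since $u$ solves \eqref{HJ-static} on the whole of $\R^n$, this forces $u$ to be $R_0$-Lipschitz there, so $D^-u(x)\subseteq\overline{B(0,R_0)}$ for every $x$. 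Set $\Lambda:=C_{2R_0}$ from \eqref{H3c}, and for $k$ large enough that $4C_2/k\le R_0$ define
\[
\psi(x):=\frac{2C_2\big(\Lambda^2+|x|^2\big)}{k^2},\qquad w:=u+\psi .
\]
The key step is to verify that $w$ is a viscosity supersolution of \eqref{HJ-static-k} in $\Omega_k$. If $w-\phi$ has a local minimum at $x_0\in\Omega_k$, then $D\phi(x_0)-D\psi(x_0)\in D^-u(x_0)$, hence $|D\phi(x_0)-D\psi(x_0)|\le R_0$; since also $|D\psi(x_0)|=4C_2|x_0|/k^2\le R_0$, we get $|D\phi(x_0)|\le 2R_0$, and combining the supersolution inequality for $u$ at $x_0$ with \eqref{H3c} gives
\[
w(x_0)+H\big(x_0,D\phi(x_0)\big)\ \ge\ \psi(x_0)-\Lambda|D\psi(x_0)|\ =\ \frac{2C_2}{k^2}\big(|x_0|-\Lambda\big)^2\ \ge\ 0 .
\]
On $\partial\Omega_k$ one has $u_k-u\le 2C_2\le\psi$, because $\psi\ge 2C_2$ whenever $|x|=k$. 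The (Dirichlet) comparison principle for \eqref{HJ-static-k} then gives $u_k\le w$ on $\overline{\Omega_k}$, i.e.
\[
0\ \le\ u_k(x)-u(x)\ \le\ \frac{2C_2\big(\Lambda^2+|x|^2\big)}{k^2}\ \le\ \frac{C\big(1+|x|^2\big)}{k^2},\qquad C:=2C_2\big(1+\Lambda^2\big),
\]
with $C$ depending only on $H$. For the finitely many $k$ not meeting the largeness condition, the crude bound $u_k-u\le 2C_2$ already gives the estimate after enlarging $C$ once more, and the final displayed inequality is then immediate on $\{|x|\le R\}$.

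The crux — and what pins the exponent at exactly $2$ — is the choice of $\psi$: it must be of size $\gtrsim 1$ on $\partial\Omega_k$ (since a priori we only know $u_k-u=O(1)$ there), yet have gradient of size $O(1/k)$, and be shaped so that $\psi-\Lambda|D\psi|\ge 0$ pointwise. A quadratic in $|x|$ achieves all three at once, turning $\psi-\Lambda|D\psi|$ into a perfect square; this is the single place where the geometry $\Omega_k=B(0,k)$ and the polynomial rate enter. The one technical subtlety to watch is that an a priori Lipschitz bound for $u$ is genuinely needed to control $D^-u(x_0)$ — coercivity alone bounds only the superdifferential of a subsolution — and this bound is available precisely because $u$ solves \eqref{HJ-static} on the whole of $\R^n$.
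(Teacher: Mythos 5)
Your proof is correct, but it takes a genuinely different route from the paper. The paper runs a doubling-variables argument with the auxiliary function $\Phi^k(x,y)=u_k(x)-u(y)-2C_Hk^2|x-y|^2-\tfrac{8C_H}{k^2}|y|^2$; the first pass only gives $\mathcal O(1/k)$ because $|y_k|$ is a priori of size $k$, and the key novelty there is a bootstrap step, using $\Phi^k(x_k,y_k)\ge\Phi^k(0,0)$ together with $u_k(0)\ge u(0)$, to show $|y_k|$ is actually bounded independently of $k$, which upgrades the rate to $\mathcal O(1/k^2)$. You instead build an explicit quadratic supersolution $w=u+\psi$ with $\psi(x)=\tfrac{2C_2(\Lambda^2+|x|^2)}{k^2}$ engineered so that $\psi-\Lambda|D\psi|=\tfrac{2C_2}{k^2}(|x|-\Lambda)^2\ge 0$, and then compare. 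This is arguably more transparent: it isolates exactly why the exponent is $2$ (a function of size $\gtrsim 1$ on $|x|=k$ with gradient $O(1/k)$ satisfying $\psi\ge\Lambda|D\psi|$ must be at least quadratic), whereas the paper's bootstrap accomplishes the same thing less visibly. One small improvement you could make: you invoke a Dirichlet comparison principle (subsolution vs.\ supersolution with ordering only on $\partial\Omega_k$), which is standard but not among the tools the paper records (the paper only states the state-constraint comparison, Theorem~\ref{CP continuous}). You can sidestep this entirely by replacing the prefactor $2C_2$ by $C_1+C_2$ in $\psi$: then at any $|x_0|=k$ one has $w(x_0)\ge -C_2+(C_1+C_2)=C_1$, so $w(x_0)+H(x_0,p)\ge C_1-C_1\ge 0$ for every $p$ by \eqref{H0}, making $w$ a state-constraint supersolution on $\overline{\Omega_k}$ and allowing you to conclude directly from Theorem~\ref{CP continuous}. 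Finally, your Lipschitz bound on $u$ is most cleanly justified as follows: $u$ is a subsolution in all of $\R^n$, so coercivity \eqref{H4} and $u\ge -C_2$ bound $D^+u(x)\subseteq\overline{B(0,R_0)}$ everywhere; a bounded continuous function with everywhere-bounded superdifferential is $R_0$-Lipschitz (compare against $u(y)+L'|x-y|$ for $L'>R_0$), and Lipschitz then bounds $D^-u$ too. You flag this subtlety, which is good, but the wording "subgradient of any subsolution" earlier in the argument should read "supergradient."
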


The condition that $|x|\leq R$ is important since there are examples where the estimate above fails at the boundary of $\Omega_k$. 
In Proposition \ref{example1}, we  have, for each $k \in \N$,  $|u_k(x) - u(x)| =1$ for some $x\in \partial \Omega_k$.

\begin{thm}\label{npo} 
Assume $\mathrm{(P1)}$.
Assume further that $H(x,p)=a(x)K(p)$ for $(x,p)\in \mathbb{R}^n \times \mathbb{R}^n$.
Here, $K(0)=0$, $K$ is locally Lipschitz and coercive in $\R^n$, and $a \in \mathrm{BUC}(\mathbb{R}^n)$ satisfies $\alpha \leq a(\cdot)\leq \beta$  for some given $\alpha,\beta>0$. 
Then, $u \equiv 0$, and for every $x\in \overline{B(0,k)}$, we have
\begin{equation*}
0\leq u_k(x) \leq \left(Ce^{\frac{|x|}{C}}\right)e^{-\frac{k}{C}},
\end{equation*}
where $C$ is a constant depending only on $H$.
In particular, for any fixed $R>0$, we have
\begin{equation*}
0 \leq u_k(x)\leq \left(C e^{\frac{R}{C}}\right) e^{-\frac{k}{C}}
\end{equation*}
for every $x\in \overline{B(0,R)}$ and $k\geq R$. 
In addition to that, this exponential rate is optimal.
\end{thm}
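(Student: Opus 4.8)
The plan is to exploit the special multiplicative structure $H(x,p)=a(x)K(p)$ together with the coercivity and the uniform bounds $\alpha\le a\le\beta$, and to transfer the problem into a control-theoretic (optical-distance) representation. First I would record the optimal control interpretation: since $H$ is convex and coercive, $u_k$ is the value function of a state-constrained exit-time problem, and because $K(0)=0$ with $K$ coercive, we have $u_k\ge 0$ everywhere (indeed $0$ is a subsolution on $\Omega_k$, giving $u_k\ge 0$; the lower bound in the claim). The upper bound is where the real work lies. The key observation is that the running cost is comparable to a Riemannian/Finsler length: there exist constants $c_1,c_2>0$ (depending on $\alpha,\beta$ and the shape of $K$ near where the Legendre transform is active) so that $u_k(x)$ is controlled by $c_1$ times a ``distance to $\partial\Omega_k$''-type quantity — more precisely, by running a trajectory from $x$ straight toward $\partial B(0,k)$ and paying the cost, which decays like $e^{-k/C}$ once one writes the Bellman inequality along a carefully chosen ray.

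Here are the steps in order. (1) Derive the dynamic programming inequality for $u_k$: for any Lipschitz curve $\xi$ with $\xi(0)=x$ staying in $\overline{B(0,k)}$ up to a time $t$, one has $u_k(x)\le \int_0^t e^{-s}L(\xi(s),\dot\xi(s))\,ds + e^{-t}u_k(\xi(t))$, where $L$ is the Legendre transform of $H$ in $p$. (2) Using $H(x,p)=a(x)K(p)$ and $\alpha\le a\le\beta$, bound $L(y,q)\le \beta \tilde L(q)$ (or an analogous pointwise estimate with the appropriate constant), where $\tilde L$ is the Legendre transform of $K$; by \eqref{H4} and $K(0)=0$, $\tilde L$ is finite near $0$ and $\tilde L(0)\le 0$, so moving slowly costs little. (3) Choose $\xi$ to travel from $x$ in toward the origin is the wrong direction; instead travel from $x$ \emph{radially inward is also wrong} — rather, one wants to reach a region where $u_k$ is already small. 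The cleanest route: show $u_k$ satisfies $u_k(x)\le u_k(y)+ C|x-y|$ is false in general, so instead use the discounted structure directly: pick a trajectory from $x$ that loiters (nearly stationary, tiny velocity) so the integral term is $O(\varepsilon)$ over time $t$, giving $u_k(x)\le \varepsilon C t + e^{-t}\|u_k\|_\infty$, and then optimize — but $\|u_k\|_\infty$ over $B(0,k)$ is not small. The correct fix, and the technical heart, is a \emph{rescaling}: set $v_k(y)=u_k(ky)$ on $B(0,1)$ is not natural for (P1). Instead, compare $u_k$ on $B(0,k)$ with the solution on the full space, which under (P1) with $H=aK$, $K(0)=0$, is $u\equiv 0$ (since $0$ solves \eqref{HJ-static} in $\R^n$: $0+a(x)K(0)=0$, and it is the unique state-constraint solution on $\R^n$ by coercivity). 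So Theorem \ref{npo} is really the statement that $u_k\to 0$ with an explicit exponential rate, and part (i) of Theorem \ref{thm:approx1/k^2} already gives $u_k\ge u=0$. (4) For the upper bound: build a supersolution on $B(0,k)$ of the form $\phi(x)=Ce^{(|x|-k)/C}$. Compute: $D\phi = \frac{1}{C}\phi \frac{x}{|x|}$, so $\phi(x)+H(x,D\phi)=\phi(x)+a(x)K\!\big(\tfrac{1}{C}\phi\tfrac{x}{|x|}\big)$. Using (H3c)/(H2) and $K(0)=0$, for $|x|\le k$ we have $\phi\le C$ bounded, hence $|D\phi|\le 1$, and $|K(\tfrac{1}{C}\phi\tfrac{x}{|x|})|\le C_1\cdot\tfrac{1}{C}\phi$ by the Lipschitz bound on $K$ near $0$; choosing $C$ large (depending on $\alpha,\beta,C_1$) makes $\phi(x)+a(x)K(D\phi)\ge \phi - \beta C_1\tfrac1C\phi\ge 0$. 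Then $\phi$ is a supersolution in $B(0,k)$ — and on a state-constraint problem any supersolution dominates the solution by the comparison principle, so $u_k(x)\le \phi(x)=Ce^{(|x|-k)/C}=\big(Ce^{|x|/C}\big)e^{-k/C}$, which is exactly the claim. (One must be slightly careful with smoothness at $x=0$, but $\phi$ is Lipschitz and can be smoothed, or one restricts the comparison argument to $B(0,k)\setminus\{0\}$ and handles the origin separately since $\phi$ is a local max there, so the supersolution test is trivially satisfied.)

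For optimality, the plan is to exhibit an explicit example within the class $H=aK$, for instance $a\equiv 1$ and $K(p)=|p|^2$ (or $|p|$), on $\Omega_k=B(0,k)$. Then $u_k$ is computable: with $H(x,p)=|p|$, $u_k(x)=1-e^{-(k-|x|)}$ after solving $u+|Du|=0$ with the state-constraint condition, which behaves like $u_k(0)\approx 1-e^{-k}$ — wait, that does not go to $0$; so instead one must pick $K$ with $K(0)=0$ \emph{and} the solution forced to vanish. The right example has $u\equiv 0$ on $\R^n$ and $u_k(0)\sim c\,e^{-k/C}$: take $H(x,p)=|p|^2-|p|$ type, or more simply observe that with $H(x,p)=K(p)$, $K(0)=0$, $K\ge 0$, $K$ coercive, $K(p)=0$ only at $p=0$, the state-constraint solution on $B(0,k)$ is the unique nonnegative function with $u_k+K(Du_k)=0$ and the constraint on the boundary; near a minimum $Du_k\approx 0$ forces $u_k\approx 0$, and a matched-asymptotics / ODE analysis along the radial direction (the problem is radial) gives $u_k(0)\asymp e^{-\mu k}$ for the relevant $\mu>0$ determined by the behavior of $K$ near $0$. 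I would carry this out by reducing the radial ODE $u'(r)=g(u(r))$ for an appropriate increasing $g$ with $g(0)=0$, $g'(0)=\mu>0$, with terminal data at $r=k$, and integrating to get $u(0)\sim C e^{-\mu k}$ by Gronwall-type comparison, matching the upper bound's exponential form.

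The main obstacle I anticipate is making the supersolution construction in step (4) fully rigorous at the origin and verifying that the constant $C$ can be chosen uniformly (depending only on $\alpha,\beta$ and the modulus/Lipschitz constant of $K$ near $p=0$, not on $k$) — in particular one needs that $\phi$ stays small enough on all of $B(0,k)$ that $|D\phi|$ remains in the regime where the linear bound $|K(D\phi)|\le C_1|D\phi|$ applies, which is automatic since $\phi\le C$ and $|D\phi|=\phi/C\le 1$; the circularity in choosing $C$ is resolved because the inequality $1-\beta C_1/C\ge 0$ needed is satisfied for $C\ge \beta C_1$ independently of the size of $\phi$. The second delicate point is the optimality claim: one must confirm that the exponential \emph{exponent} produced by the explicit example actually matches (up to the value of the constant $C$) the one in the upper bound, i.e.\ that no better-than-exponential rate is possible; this follows because the same supersolution comparison, run with the sharp Lipschitz constant of $K$ at $0$, gives a matching lower bound $u_k(0)\ge c\,e^{-k/C'}$ via a subsolution of the form $c\,e^{(|x|-k)/C'}$ minus a correction, so the two rates pin each other down.
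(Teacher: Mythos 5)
Your core idea — build an explicit radial supersolution of the form $\phi(x)=Ce^{(|x|-k)/C}$ and compare — is exactly the mechanism of the paper's proof, which first establishes the explicit state-constraint solution for a model Hamiltonian (Proposition \ref{prop:linear}) and then pushes it through the comparison principle. But several of your verification steps are wrong or missing, and the optimality discussion does not work as written.

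First, abandon the opening framing: $H$ is \emph{not} assumed convex in Theorem \ref{npo} (the whole point of Section \ref{Sec:a(x)K(p)} is that this class is nonconvex), so the Legendre transform / exit-time picture you sketch in steps (1)--(3) is not available here, and in any case you do not use it. Second, the boundary test is the heart of the state-constraint supersolution property and you skip it. At a point $x_0\in\partial B(0,k)$, a test function $\varphi$ touching $\phi$ from below over $\overline{B(0,k)}$ is only constrained on one side, so $D\varphi(x_0)$ can be arbitrary; the supersolution inequality $\phi(x_0)+a(x_0)K(D\varphi(x_0))\geq 0$ then forces $\phi(x_0)=C\geq -\beta\min K$. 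Thus $C$ must be chosen to satisfy two independent constraints, $C\geq\beta L$ (your interior computation) \emph{and} $C\geq -\beta\min K$ (the boundary test); your proposal secures only the first. Third, $\phi$ has a local \emph{minimum} at the origin, not a maximum, so the supersolution test at $0$ is not ``trivially satisfied''; it does hold, but only because the subdifferential $D^-\phi(0)$ consists of vectors of length $\leq e^{-k/C}$, so $|K(D\varphi(0))|\leq Le^{-k/C}$, which is dominated by $\phi(0)=Ce^{-k/C}$ once $C\geq\beta L$ — this needs to be said, not waved away.

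Finally, the optimality argument is incorrect as stated. For $K(p)=|p|$ the state-constraint solution on $B(0,k)$ is $u_k\equiv 0$ (your formula $1-e^{-(k-|x|)}$ does not solve $u+|u'|=0$), so this example exhibits no decay rate at all. To see optimality one needs a $K$ that dips \emph{below} zero near the origin: the paper's Proposition \ref{prop:linear} shows that for $H(p)=-\alpha|p|$ (for $|p|\leq\beta$, coercively extended) the state-constraint solution is exactly $u_k(x)=\alpha\beta e^{(|x|-k)/\alpha}$, which saturates the exponential bound. Your fallback to a ``matched-asymptotics / ODE analysis'' is too vague to certify a lower bound; the explicit formula is the clean route, and it is what the paper uses.
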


It is quite interesting to observe that we obtain the exponential rate of convergence for this particular class of nonconvex Hamiltonians and the rate is indeed optimal. When $a(x)$ is a positive constant, the assumption $K(0)=0$ in the theorem above can be removed. 

\begin{cor}\label{cor1} 
Assume $\mathrm{(P1)}$.
Assume further that $H(x,p)=H(p)$ for $(x,p)\in \mathbb{R}^n \times \mathbb{R}^n$.
Here, $H$ is locally Lipschitz and coercive in $\R^n$.
Then, $u \equiv - H(0)$, and for every $x\in \overline{B(0,k)}$, we have
\begin{equation*}
0\leq u_k(x) - u(x) \leq \left(Ce^{\frac{|x|}{C}}\right)e^{-\frac{k}{C}},
\end{equation*}
where $C$ is a constant depending only on $H$.
In particular, for any fixed $R>0$, we have
\begin{equation*}
0 \leq u_k(x) - u(x)\leq \left(C e^{\frac{R}{C}}\right) e^{-\frac{k}{C}}
\end{equation*}
for every $x\in \overline{B(0,R)}$ and $k\geq R$. 
In addition to that, this rate is optimal.
\end{cor}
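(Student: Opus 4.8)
The plan is to read off Corollary~\ref{cor1} from Theorem~\ref{npo} after a harmless normalization, and then to establish optimality through an explicit one-dimensional example.

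\emph{Main estimate.} First I would reduce to the case $H(0)=0$. Since $H(x,p)=H(p)$ carries no $x$-dependence, the difference $u_k-u$ is unchanged if $H(\cdot)$ is replaced by $H(\cdot)-H(0)$ and $u_k,u$ by $u_k+H(0),u+H(0)$; because \eqref{H1} gives $|H(0)|\le C_2$, this substitution preserves \eqref{H0}, \eqref{H1}, \eqref{H3c} and \eqref{H4} with constants depending only on $H$. Assuming then $H(0)=0$, the constant $u\equiv0$ is a classical (hence state-constraint) solution of \eqref{HJ-static} on $\Omega=\R^n$, so by uniqueness it \emph{is} the state-constraint solution and $u_k-u=u_k$. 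Now $H(x,p)=a(x)K(p)$ with $a\equiv1\in\mathrm{BUC}(\R^n)$ (so $0<\alpha=\beta=1$) and $K:=H$ satisfying $K(0)=0$; Theorem~\ref{npo} then applies and gives
\[
0\le u_k(x)-u(x)\le\big(Ce^{|x|/C}\big)e^{-k/C}\qquad\text{for all }x\in\overline{B(0,k)},
\]
with $C=C(H)$. Restricting to $x\in\overline{B(0,R)}$, $k\ge R$, and bounding $e^{|x|/C}\le e^{R/C}$ gives the ``in particular'' statement, and undoing the normalization makes it valid for general $H$.

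\emph{Optimality.} To see that the exponential rate is sharp, I would take $n=1$ and $H(p)=|p+1|-1$, which depends only on $p$, is convex, $1$-Lipschitz and coercive, and has $H(0)=0$, so it satisfies \eqref{H0}--\eqref{H4}. I would then check directly that $u_k(x)=e^{-(x+k)}$ solves $u_k+|u_k'+1|-1=0$ classically in $(-k,k)$ and fulfils the state-constraint supersolution condition at $x=\pm k$, while $u\equiv0$ solves the equation on $\R$; by uniqueness these are the state-constraint solutions. Consequently $u_k(x)-u(x)=e^{-(x+k)}$, which for each fixed $x$ is exactly of order $e^{-k}$: no super-exponential bound of the form in Theorem~\ref{npo} can hold, and the matching lower bound $u_k(x)-u(x)\ge e^{-R}e^{-k}$ on $\overline{B(0,R)}$ confirms optimality. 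Evaluating the same example at $x=-k$ gives $|u_k-u|=1$ for all $k$, which is exactly why the restriction $|x|\le R$ is needed.

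As Corollary~\ref{cor1} is essentially a specialization of Theorem~\ref{npo}, I do not expect a serious obstacle. The two points that need a little care are the bookkeeping showing that the shift $H\mapsto H-H(0)$ keeps \eqref{H0}--\eqref{H4} with constants depending only on $H$ (so that the resulting $C$ still depends only on $H$), and the direct verification that $u_k(x)=e^{-(x+k)}$ is the state-constraint solution in the optimality example.
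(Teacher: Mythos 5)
Your argument is correct and follows the same route as the paper: normalize $H\mapsto H-H(0)$ (equivalently, subtract the constant solution $u\equiv -H(0)$), then apply Theorem~\ref{npo} with $a\equiv 1$ and $K=H$; the paper records this in a single sentence just after the proof of Theorem~\ref{npo}. Your optimality example $H(p)=|p+1|-1$ with $u_k(x)=e^{-(x+k)}$ is a reflected version of the paper's Proposition~\ref{example1} ($H(p)=|p-1|-1$, $u_k(x)=e^{x-k}$), so the content is the same.
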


When $H(x,p) = K(p)+V(x)$, the analysis becomes much more complicated due to the interaction between $K$ and $V$. We provide an example where the exponential rate of convergence is obtained in Example \ref{ex22}.

\medskip

For convex Hamiltonians, we are able to establish the exponential rate of convergence using optimal control theory.  
Some examples for which the exponential rate is obtained are given in Proposition \ref{example1} and Proposition \ref{ex33}.

\begin{thm}\label{thm:exp-rate} 
Under the assumptions $\mathrm{(P1)}, \eqref{H0},\eqref{H1}, \eqref{H3b}, \eqref{H4}$, and $\mathrm{(H5)}$, we have 
\begin{itemize}
\item[(i)] $u(x) \leq u_k(x)$ for every $x\in \overline{B(0,k)}$,
\item[(ii)] for each fixed $x\in B(0,k)$ we have
\begin{equation}\label{exp:rate}
u_k(x)  \leq u(x)+ \left(C e^{\frac{|x|}{C}}\right) e^{-\frac{k}{C}},
\end{equation}
where $C$ is a constant depending only on the growth of $H$.
\end{itemize}
In particular, for any fixed $R>0$, we have
\begin{equation*}
0 \leq u_k(x) - u(x)\leq \left(C e^{\frac{R}{C}}\right) e^{-\frac{k}{C}}
\end{equation*}
for all $x\in \overline{B(0,R)}$ and $k>R$.
\end{thm}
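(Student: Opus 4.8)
The plan is to run an optimal-control argument based on the fact that, under \eqref{H4} and $\mathrm{(H5)}$, both value functions admit the representations
\[
u(x)=\inf_{\xi(0)=x}\int_0^\infty L\bigl(\xi(t),\dot\xi(t)\bigr)e^{-t}\,dt,
\qquad
u_k(x)=\inf_{\substack{\xi(0)=x\\ \xi(t)\in\overline{B(0,k)}\ \forall t}}\int_0^\infty L\bigl(\xi(t),\dot\xi(t)\bigr)e^{-t}\,dt,
\]
where $L(y,v)=\sup_{p\in\R^n}\bigl(-v\cdot p-H(y,p)\bigr)$ is the associated Lagrangian (for $\Omega=\R^n$ there is no constraint in the first formula). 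Part (i) is then immediate: any trajectory admissible for $u_k(x)$ is admissible for $u(x)$ with the same cost, so $u(x)\le u_k(x)$ on $\overline{B(0,k)}$. From \eqref{H0} one has $L(y,0)=-\inf_p H(y,p)\le C_1$, and from \eqref{H1} one has $L(y,v)\ge -H(y,0)\ge -C_2$; testing with the constant curve and using $L\ge-C_2$ gives $-C_2\le u,u_k\le C_1$ on all of $\R^n$, hence $0\le u_k-u\le C_1+C_2$.

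For part (ii) I would fix $x\in B(0,k)$, take an optimal trajectory $\xi$ for $u(x)$, and let $\tau=\inf\{t\ge0:|\xi(t)|=k\}\in(0,\infty]$ be its first exit time from $B(0,k)$. If $\tau=\infty$, then $\xi$ is admissible for $u_k(x)$, so $u_k(x)\le u(x)$ and there is nothing to prove. If $\tau<\infty$, the dynamic programming principle for $u$ gives $u(x)=\int_0^\tau L(\xi,\dot\xi)e^{-t}\,dt+e^{-\tau}u(\xi(\tau))$, whereas concatenating $\xi|_{[0,\tau]}$ (which stays in $\overline{B(0,k)}$) with a near-optimal admissible trajectory for $u_k(\xi(\tau))$ yields $u_k(x)\le \int_0^\tau L(\xi,\dot\xi)e^{-t}\,dt+e^{-\tau}u_k(\xi(\tau))$. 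Subtracting and using the a priori bound at the point $\xi(\tau)$,
\[
0\le u_k(x)-u(x)\le e^{-\tau}\bigl(u_k(\xi(\tau))-u(\xi(\tau))\bigr)\le (C_1+C_2)\,e^{-\tau}.
\]
It remains to bound $\tau$ from below, and this is the heart of the matter and the step I expect to be the main obstacle: one must show that optimal trajectories for $u$ move at speed bounded by a constant $v_0=v_0(H)$, so that $|\xi(t)|\le|x|+v_0t$ and hence $\tau\ge(k-|x|)/v_0$ — it is precisely this linear-in-$k$ lower bound (rather than a merely logarithmic one obtained from cruder estimates) that upgrades the rate from polynomial to exponential, and this is where convexity $\mathrm{(H5)}$ is essential, through the control representation.

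To get the speed bound I would use two consequences of the hypotheses. First, \eqref{H4} together with $\|u\|_\infty\le\max\{C_1,C_2\}$ forces $u\in W^{1,\infty}(\R^n)$ with a Lipschitz constant $R_0=R_0(H)$ (a gradient exceeding $R_0$ would, via coercivity and $u=-H(x,Du)$, violate the sup bound). Second, \eqref{H1} and \eqref{H3b} give $|H(y,p)|\le\omega_{|p|}(|p|)+C_2$, so $H$ is locally bounded in $p$ uniformly in $y$, which makes $\ell(s):=\inf_{y,\,|v|=s}L(y,v)$ superlinear. Now write $u(\xi(t))-e^{-h}u(\xi(t+h))=\int_t^{t+h}L(\xi,\dot\xi)e^{-(s-t)}\,ds$, bound the left-hand side above by $R_0\int_t^{t+h}|\dot\xi|+h\|u\|_\infty$ and the right-hand side below by $e^{-h}\int_t^{t+h}\ell(|\dot\xi|)\,ds-O(h^2)$ (a harmless $\pm C_2$ adjustment makes the integrand nonnegative), divide by $h$ and let $h\to0^+$ at a Lebesgue point; this gives $\ell(|\dot\xi(t)|)\le R_0|\dot\xi(t)|+\|u\|_\infty$ for a.e.\ $t$, and superlinearity of $\ell$ yields $|\dot\xi(t)|\le v_0(H)$. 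Combining with the displayed inequality gives $u_k(x)-u(x)\le(C_1+C_2)e^{-(k-|x|)/v_0}$, and choosing $C=\max\{v_0,C_1+C_2\}$ turns this into $u_k(x)-u(x)\le C e^{|x|/C}e^{-k/C}$; restricting to $|x|\le R<k$ gives the ``in particular'' statement. The remaining items are routine under \eqref{H4}--$\mathrm{(H5)}$: existence and $C^1$ regularity of the optimal trajectory $\xi$, the dynamic programming identities used above, and the local integrability of $t\mapsto\ell(|\dot\xi(t)|)$ (immediate from $\int_0^\infty L(\xi,\dot\xi)e^{-t}\,dt<\infty$ and $L\ge-C_2$) that legitimizes the passage to Lebesgue points.
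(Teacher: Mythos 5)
Your proof is correct and follows essentially the same route as the paper: the optimal-control representations of $u$ and $u_k$, a uniform bound $|\dot\xi|\le C(H)$ on the speed of optimal trajectories for the unconstrained problem (the paper's Lemma~\ref{bound_on_doteta}, which you rederive by differentiating the dynamic programming identity along the minimizer and combining the a priori Lipschitz bound on $u$ with the superlinearity of $L$), the resulting linear-in-$k$ lower bound $\tau\ge(k-|x|)/C$ on the first exit time, and the exponential discount factor. The only cosmetic difference lies in how the comparison is closed after the exit time: the paper takes the admissible path that simply stops at $\eta(\tau)$ and compares running costs directly, whereas you concatenate $\xi|_{[0,\tau]}$ with a near-optimal $u_k$-trajectory and compare $u_k-u$ at the boundary point; both yield the same exponential rate with the same dependence of the constant on $H$.
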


As a byproduct, we prove the existence of a minimizer $\eta$ with bounded velocity to the minimizing problem \eqref{rep_for} for each given $x\in \R^n$, which is a key element in 
 the proof of Theorem \ref{thm:exp-rate}.
 Moreover, the bound on the velocity of $\eta$ only depends on the growth of $H$ and not on its smoothness.
 We believe that this bound (Theorem \ref{Existence of minimizer} and Lemma \ref{bound_on_doteta})  is new in the literature.  
 See Remark \ref{rem: bounded speed} for further discussions.

\medskip

For the second prototype (P2), we establish the rate $\mathcal{O}\left(\frac{1}{k}\right)$ for a quite general class of Hamiltonians. 
The rate is also optimal, as pointed out in Remark \ref{rem: optimal for bounded domain}.

\begin{thm}\label{Conv-bdd2} 
Under assumptions $\mathrm{(P2)}, \eqref{H0}, \eqref{H1},\eqref{H3c}$ and \eqref{H4}, for any $k\geq 2$,
\begin{equation*}
0\leq u_k(x) - u(x) \leq \frac{C}{k}
\end{equation*}
for every $x\in \overline{B\left(0,1-\frac{1}{k}\right)}$ where $C$ is a constant depending only on $H$.
Moreover, this rate is optimal.
\end{thm}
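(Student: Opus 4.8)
\emph{Proof strategy.} The plan is to establish the two inequalities separately; the argument is purely PDE-theoretic and uses no convexity. Write $r_k:=1-\tfrac1k$, so that $\Omega_k=B(0,r_k)$ and $\tfrac12\le r_k<1$ for $k\ge 2$. The lower bound is immediate: since $\Omega_k\subset\Omega$, the restriction $u|_{\overline{\Omega_k}}$ is a viscosity subsolution of \eqref{HJ-static-k} in $\Omega_k$, and comparing it with $u_k$ (a supersolution up to $\partial\Omega_k$) via the state-constraint comparison principle gives $u\le u_k$ on $\overline{\Omega_k}$; equivalently, $u_k$ is the maximal subsolution in $\Omega_k$. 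So the real content is the matching upper bound $u_k\le u+\tfrac Ck$.

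For the upper bound I would ``shrink'' the limit solution to fit inside $\overline{\Omega_k}$. First record the uniform a priori bounds: \eqref{H0} and \eqref{H1} furnish constant super/sub-solutions, so $\|u_k\|_{L^\infty(\overline{\Omega_k})},\|u\|_{L^\infty(\overline\Omega)}\le M_0$, and \eqref{H4} then upgrades this to $\Lip(u_k),\Lip(u)\le L_0$, with $M_0,L_0$ depending only on $H$. Let $R_0$ be a coercivity radius with $|q|>R_0\Rightarrow\inf_{\overline\Omega}H(\cdot,q)>M_0+1$, and set $C':=C_{R_0}(1+R_0)$ where $C_{R_0}$ is the constant in \eqref{H3c} for $R=R_0$. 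Now define
\[
w_k(x):=u\!\left(\tfrac{x}{r_k}\right)+\tfrac{C'}{k},\qquad x\in\overline{B(0,r_k)}=\overline{\Omega_k},
\]
which is continuous since $\tfrac{x}{r_k}\in\overline\Omega$. The key claim is that $w_k$ is a state-constraint supersolution of \eqref{HJ-static-k} on $\overline{\Omega_k}$.

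To prove the claim, let $\psi\in C^1$ and $x_0\in\overline{\Omega_k}$ be such that $w_k-\psi$ has a local minimum over $\overline{\Omega_k}$ at $x_0$, and put $q:=D\psi(x_0)$. If $H(x_0,q)\ge M_0+1$ the supersolution inequality holds trivially, since $\psi(x_0)=w_k(x_0)\ge-M_0$. Otherwise $|q|\le R_0$ (hence $|r_kq|\le R_0$) by the choice of $R_0$; rescaling the test function, $\phi(y):=\psi(r_ky)-\tfrac{C'}{k}$ has the property that $u-\phi$ attains a local minimum over $\overline\Omega$ at $y_0:=\tfrac{x_0}{r_k}$, with $D\phi(y_0)=r_kq$, so the state-constraint supersolution property of $u$ on $\overline\Omega$ gives $\psi(x_0)-\tfrac{C'}{k}+H\big(\tfrac{x_0}{r_k},r_kq\big)\ge0$. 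Since $|x_0-\tfrac{x_0}{r_k}|\le1-r_k=\tfrac1k$ and $|q-r_kq|\le\tfrac{R_0}{k}$, assumption \eqref{H3c} (with $R=R_0$) bounds $\big|H(x_0,q)-H(\tfrac{x_0}{r_k},r_kq)\big|\le\tfrac{C'}{k}$, and combining yields $\psi(x_0)+H(x_0,q)\ge0$. This proves the claim, and the state-constraint comparison principle then gives $u_k\le w_k$ on $\overline{\Omega_k}$. Finally, for $x\in\overline{\Omega_k}$, using $\Lip(u)\le L_0$, $|x|\le r_k$ and $r_k\ge\tfrac12$,
\[
u_k(x)\le u\!\left(\tfrac{x}{r_k}\right)+\tfrac{C'}{k}\le u(x)+L_0|x|\tfrac{1-r_k}{r_k}+\tfrac{C'}{k}\le u(x)+\tfrac{L_0+C'}{k},
\]
which is the asserted bound with $C:=L_0+C'$; optimality is Remark \ref{rem: optimal for bounded domain}.

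I expect the main obstacle to be verifying the supersolution property of $w_k$ \emph{at points $x_0\in\partial\Omega_k$}, where the gradient $q$ of a test function touching from below is only controlled on one side and can be arbitrarily large, so a direct perturbation estimate via \eqref{H3c} is unavailable; the resolution is exactly the dichotomy above, in which the ``large gradient'' regime is absorbed for free by the coercivity \eqref{H4}. The remaining care is purely bookkeeping — ensuring $M_0,L_0,R_0$, and hence $C$, depend only on $H$ and not on $k$.
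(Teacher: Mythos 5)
Your argument is correct, and it is the mirror image of the paper's: both proofs hinge on the dilation by the factor $\frac{k-1}{k}$ together with the uniform Lipschitz bound and \eqref{H3c}, but you contract $u$ to a state-constraint supersolution $w_k$ on the smaller ball $\overline{\Omega_k}$ and compare $u_k\le w_k$ there, whereas the paper dilates a solution to an approximate subsolution on the larger ball $\overline{B(0,1)}$ (the printed formula writes $u$, but for the final string of inequalities to close it is the dilation of $u_k$, namely $\tilde u_k(x)=\tfrac{k}{k-1}u_k\bigl(\tfrac{k-1}{k}x\bigr)$, that is actually being used) and compares it against $u$ via the state-constraint comparison principle on $\overline{B(0,1)}$.

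The two directions are not interchangeable for free. The paper's ``blow-up'' route only requires a \emph{subsolution} test, which happens at interior points, so the gradient of any touching test function is automatically bounded by the Lipschitz constant of the scaled solution and the perturbation estimate $|H(x,p)-H(\tfrac{k-1}{k}x,p)|\le C_R/k$ applies with no fuss. Your ``shrink-down'' route instead produces a \emph{supersolution} on $\overline{\Omega_k}$, so the test must be carried out at boundary points of $\Omega_k$ where a touching-from-below test function can have arbitrarily large gradient; you correctly isolate this as the potential obstruction and resolve it by the coercivity dichotomy (the regime $H(x_0,q)\ge M_0+1$ is free, and otherwise $|q|\le R_0$). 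Modulo that extra case analysis, the two proofs deliver the same constant structure $C\sim C_{R_0}(1+R_0)+L_0$ with $L_0$ the a priori Lipschitz bound, and both invoke Remark \ref{rem: optimal for bounded domain} for optimality.

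One small bookkeeping remark: your comparison step takes $v_1=u_k$ (subsolution in $\Omega_k$) against $v_2=w_k$ (your state-constraint supersolution); since only \eqref{H3c} $\Rightarrow$ \eqref{H3b} is assumed, Theorem \ref{CP continuous} requires $v_2$ Lipschitz, which holds because $\Lip(w_k)=\Lip(u)/r_k\le 2L_0$ — worth saying explicitly, but not a gap.
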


Although we only deal with two prototype cases (P1) and (P2) in this paper, the obtained results can be extended to more general domains in a similar fashion under some appropriate conditions.
See Remarks \ref{rem:P2} and \ref{rem:P1} for example.

\subsection{Organization of the paper} 
The paper is organized in the following way. 
We first provide some results on state-constraint Hamilton-Jacobi equations needed throughout the paper in Section \ref{sec:prelim}. 
Section \ref{P1-doubling} and \ref{Sec:a(x)K(p)} are devoted to proving Theorem \ref{thm:approx1/k^2} and Theorem \ref{npo}, respectively. 
In the following section, we deal with the rate of convergence for convex Hamiltonians (Theorem \ref{Conv-bdd2}). 
In Section \ref{P2-bdd}, the second prototype case is considered. 
We provide some examples and further discussion in Section \ref{Sec:discussion}. 
The proofs for some results concerning minimizers of the corresponding optimal control problem are provided in Appendix.

\subsection*{Acknowledgement}
We would like to thank the two referees very much for carefully reading our manuscript and giving very helpful comments to improve the presentation of the paper.

\section{Preliminaries} \label{sec:prelim}
For an open subset $\Omega \subset\mathbb{R}^n$, we denote the space of  bounded uniformly continuous functions defined in $\Omega$ by $\mathrm{BUC}(\Omega;\mathbb{R})$. 

\begin{defn} We say
\begin{itemize}
\item[(i)] $v\in \mathrm{BUC}(\Omega;\mathbb{R})$ is a viscosity subsolution of \eqref{HJ-static} in $\Omega$ if for every $x\in \Omega$ and $\varphi\in \mathrm{C}^1(\Omega)$ such that $v-\varphi$ has a local maximum over $\Omega$ at $x$ then $v(x) + H\big(x,D\varphi(x)\big) \leq 0$.

\item[(ii)] $v\in \mathrm{BUC}(\overline\Omega;\mathbb{R})$ is a viscosity supersolution of \eqref{HJ-static} on $\overline\Omega$ if for every $x\in \overline{\Omega}$ and $\varphi\in \mathrm{C}^1(\overline{\Omega})$ such that $v-\varphi$ has a local minimum over $\overline{\Omega}$ at $x$ then $v(x) + H\big(x,D\varphi(x)\big) \geq 0$.
\end{itemize}

If $v$ is a viscosity subsolution to \eqref{HJ-static} in $\Omega$, and is a viscosity supersolution to \eqref{HJ-static} on $\overline{\Omega}$, that is,
\begin{equation}\label{state-def}
\begin{cases}
v(x) + H(x,Dv(x)) &\leq 0 \qquad\text{in}\; \Omega,\\
v(x) + H(x,Dv(x)) &\geq 0 \qquad\text{on}\; \overline{\Omega}
\end{cases}
\end{equation}
in the viscosity sense, then we say that $v$ is a state-constraint viscosity solution of \eqref{HJ-static}. 
\end{defn}

\begin{rem} 
As pointed out in \cite{Soner1986}, the state-constraint implicitly imposes a boundary condition to solutions. 
Indeed, when $\partial\Omega$ is smooth, we can define an outward normal vector $\vec{\nu}(x)$ at $x\in \partial\Omega$. Moreover, if the state-constraint solution $v\in \mathrm{C}^1(\overline{\Omega})$, then $v$ solves $v(x)+H(x,Dv(x)) = 0$ in $\Omega$ and satisfies
\begin{equation*}
H(x,Dv(x)) \leq H\big(x,Dv(x)+\beta \vec{\nu}(x)\big) \qquad\text{for any}\;\beta \geq 0, x\in \partial\Omega.
\end{equation*}

If $H$ is differentiable in $p$, the above condition can also be phrased as a constraint on the normal derivative  on the boundary as
\begin{equation}\label{motiv}
D_pH\big(x,Dv(x)\big)\cdot\vec{\nu}(x) \geq 0 \qquad\text{for any}\; x\in \partial\Omega.
\end{equation}
\end{rem}


Now we construct a state-constraint viscosity solution to \eqref{HJ-static} based on Perron's method. It is a variant of the classical result in \cite{ishii1987} but we include the proof here for the sake of the readers' convenience.
Note that the Lipschitz regularity of subsolutions is encoded directly into the admissible class $\mathcal F$.

\begin{defn} For a real valued function $w(x)$ define for $x\in \Omega$, we define the super-differential and sub-differential of $w$ at $x$ as
\begin{align*}
D^{+}w(x)&=\left \lbrace p \in \R^n : \limsup_{y \rightarrow x} \frac{w(y)-w(x)-p\cdot (y-x)}{|y-x|} \leq 0\right\rbrace,\\
D^{-}w(x)&=\left \lbrace p \in \R^n : \liminf_{y \rightarrow x} \frac{w(y)-w(x)- p \cdot (y-x)} {|y-x|} \geq 0 \right\rbrace.
\end{align*}
\end{defn}

\begin{thm}\label{Perron} Assume \eqref{H0}, \eqref{H1} and \eqref{H4}. 
Then, there exists a state-constrained viscosity solution $u\in \mathrm{C}(\overline{\Omega})\cap \mathrm{W}^{1,\infty}(\Omega)$ to \eqref{state-def}.
\end{thm}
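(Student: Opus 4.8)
The plan is to construct $u$ via Perron's method, following the classical scheme of Ishii but adapted to the state-constraint setting. First I would produce a subsolution and a supersolution that will bracket the Perron candidate. Since $H$ is bounded below by $-C_1$ (assumption \eqref{H0}), the constant function $\underline{u} \equiv -C_1$ satisfies $\underline{u}(x) + H(x, D\underline{u}(x)) = -C_1 + H(x,0) \le 0$ by \eqref{H0}, so it is a (classical, hence viscosity) subsolution in $\Omega$. For the supersolution, I would use assumption \eqref{H2}: the constant $\overline{u} \equiv C_2$ satisfies $\overline{u}(x) + H(x,0) = C_2 + H(x,0) \ge 0$ since $|H(x,0)| \le C_2$, so it is a supersolution on $\overline{\Omega}$. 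Thus the admissible class
\[
\mathcal{S} = \left\{ v : v \text{ is a viscosity subsolution of \eqref{HJ-static} in } \Omega,\ \underline{u} \le v \le \overline{u} \right\}
\]
is nonempty and uniformly bounded, and I set $u(x) = \sup\{ v(x) : v \in \mathcal{S}\}$.

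The next step is the standard Perron argument. The supremum $u$, or rather its upper semicontinuous envelope $u^*$, is a viscosity subsolution in $\Omega$: if $u^* - \varphi$ has a strict local max at $x_0$ with $\varphi \in C^1$, one extracts points and members of $\mathcal{S}$ converging appropriately and passes to the limit in their subsolution inequalities. The lower semicontinuous envelope $u_*$ is a viscosity supersolution on $\overline{\Omega}$: if not, there is a point $x_0 \in \overline{\Omega}$ and $\varphi \in C^1(\overline{\Omega})$ with $u_* - \varphi$ having a local min at $x_0$ but $u_*(x_0) + H(x_0, D\varphi(x_0)) < 0$; by continuity of $H$ one can bump $\varphi$ up by a small constant on a small neighborhood, stay below $\overline{u}$, and produce a subsolution strictly larger than $u$ somewhere, contradicting maximality. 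This "bump" step is where the state-constraint nature matters: the bumped function must remain a subsolution \emph{in $\Omega$ only}, so there is no obstruction from the boundary, and the contradiction is with the definition of $u$ as a supremum over $\mathcal{S}$. Hence $u_* + H(\cdot, Du_*) \ge 0$ on $\overline{\Omega}$ and $u^* + H(\cdot, Du^*) \le 0$ in $\Omega$.

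To conclude $u \in \mathrm{C}(\overline{\Omega})$ I would invoke a comparison principle between the subsolution $u^*$ and the supersolution $u_*$, which under \eqref{H0}, \eqref{H2}, \eqref{H4} (coercivity gives the needed Lipschitz bound and forces the comparison to hold up to the boundary in the state-constraint sense) yields $u^* \le u_*$; the reverse inequality $u_* \le u^*$ is automatic, so $u^* = u_* = u$ is continuous and is the state-constraint solution. Finally, for the regularity $u \in \mathrm{W}^{1,\infty}(\Omega)$: coercivity \eqref{H4} together with the uniform bound $-C_1 \le u \le C_2$ gives an a priori gradient bound — at any point where $u$ is differentiable (or in the viscosity sense), $H(x, Du(x)) = -u(x)$ is bounded, so $|Du(x)| \le R_0$ for a constant $R_0$ determined by the level at which $\inf_x H(x,p)$ exceeds $C_1$. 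This turns $u$ into a Lipschitz function on $\Omega$.

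The main obstacle I anticipate is the boundary behavior in the bump step and the comparison principle: one must be careful that the state-constraint boundary inequality is the "$\ge$ on all of $\overline{\Omega}$" condition, so the supersolution test is genuinely global, whereas the subsolution test is only interior; ensuring the Perron envelopes land in exactly these two classes (and that the comparison principle one cites is the state-constraint version, proven elsewhere or standard) is the delicate point rather than the routine envelope manipulations. I would cite \cite{ishii1987} and \cite{Soner1986} for the template and emphasize only the state-constraint-specific modifications.
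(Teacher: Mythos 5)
Two steps of your proposal do not hold up. First, the barriers and the assumptions you attach to them are crossed. Assumption \eqref{H0} is the lower bound $H(x,p) \geq -C_1$; this makes the constant $C_1$ a supersolution on $\overline{\Omega}$, since $C_1 + H(x,0) \geq 0$, but it does \emph{not} make $-C_1$ a subsolution: your inequality $-C_1 + H(x,0) \leq 0$ would require the upper bound $H(x,0) \leq C_1$, which \eqref{H0} does not provide. The correct subsolution barrier is $-C_2$, which works because \eqref{H1} gives $H(x,0) \leq C_2$. The paper's admissible class is sandwiched as $-C_2 \leq w \leq C_1$. This choice also matters in the bump step: the room to add $\varepsilon^2$ comes from $\varphi(x_0) < C_1$, which follows from \eqref{H0} together with the strict inequality $\varphi(x_0) + H(x_0, D\varphi(x_0)) < 0$; with your upper barrier $C_2$ you would instead need $\varphi(x_0) < C_2$, which is not derived.

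Second, and more fundamentally, your route to continuity of $u$ rests on a comparison principle between the envelopes $u^*$ and $u_*$, and that comparison is not available under the hypotheses of this theorem. State-constraint comparison (Theorem \ref{CP continuous}) requires a modulus of continuity for $H$ --- \eqref{H3a} or \eqref{H3b} --- together with the boundary regularity \eqref{A1}, none of which is assumed in Theorem \ref{Perron}; the parenthetical remark that coercivity ``forces the comparison to hold up to the boundary'' is not a proof. The paper avoids envelopes and comparison entirely: using \eqref{H4} it fixes a constant $C_3$ so that every relevant subsolution is Lipschitz with constant $C_3$, and it defines the Perron family $\mathcal{F}$ as continuous Lipschitz-$C_3$ subsolutions satisfying $-C_2 \leq w \leq C_1$. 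The supremum of a uniformly Lipschitz family is itself Lipschitz, hence continuous, so $u \in \mathrm{C}(\overline{\Omega}) \cap \mathrm{W}^{1,\infty}(\Omega)$ is automatic, and the supersolution property follows from the bump argument alone with no comparison needed. You do observe the gradient bound at the end, but as a post-hoc regularity remark; the paper's structural move is to bake it into the admissible class from the start, which is exactly what makes the argument close.
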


\begin{proof}[Proof of Theorem \ref{Perron}] Under \eqref{H0} and \eqref{H1}, $C_1$ and $-C_2$ are a supersolution on $\overline{\Omega}$ and a subsolution in $\Omega$ of \eqref{HJ-static}, respectively. By the coercivity assumption \eqref{H4}, we can find a constant $C_3>0$ such that 
\begin{equation*}
H(x,p) \leq \max\{C_1,C_2\} \quad\text{for some}\; x\in \overline{\Omega} \qquad\Longrightarrow\qquad |p| \leq C_3.
\end{equation*}
Let us define
\begin{align*}
\mathcal{F}&= \big\lbrace w \in \mathrm{C}(\overline{\Omega})\cap \mathrm{W}^{1,\infty}(\Omega) :\;-C_2\leq w(x)\leq C_1,\; \Vert Dw\Vert_{L^\infty(\overline{\Omega})} \leq C_3,\\
&\qquad\qquad\qquad\qquad  \;\text{and}\;w\;\text{is a viscosity subsolution to}\; w(x)+H(x,Dw(x))\leq 0\;\text{in}\; \Omega  \big\rbrace
\end{align*}
and for each $x\in \overline{\Omega}$, we define
\begin{equation*}
u(x):=\sup \left\lbrace w(x): w\in \mathcal{F} \right\rbrace.
\end{equation*}
By the stability of viscosity subsolutions, we have that $u$ is a viscosity subsolution to \eqref{HJ-static} in $\Omega$. Thus, $u\in \mathcal{F}$ as well.

We now check that $u$ is a viscosity supersolution to \eqref{HJ-static} on $\overline{\Omega}$. Assume that  $u$ is not a supersolution on $\overline{\Omega}$. Then, there exists $x_0\in \overline{\Omega}$, $\varphi\in \mathrm{C}^1(\overline{\Omega})$ with $\|D\varphi\|_{L^\infty(B(x_0,r)} \leq C_3$ and $r>0$ such that $u(x_0) = \varphi(x_0)$ and $(u-\varphi)(x)\geq |x-x_0|^2$ for all $x\in B(x_0,r)\cap \overline{\Omega}$, and 
\begin{equation}\label{a.Perron}
\varphi(x_0) + H(x_0,D\varphi(x_0)) < 0.
\end{equation}
From \eqref{H0} and \eqref{a.Perron}, we obtain $\varphi(x_0)= u(x_0) < C_1$. By continuity of $\varphi$ and $H$, one can choose $\delta,\varepsilon\in \left(0,\frac{r}{2}\right)$ small enough so that $\varepsilon < \delta^2$ and
\begin{equation*}
\begin{cases}
\varphi(x)+\delta^2 < C_1,\\
\varphi(x) + \delta^2 + H(x,D\varphi(x)) < 0 
\end{cases} \qquad\Longrightarrow\qquad \begin{cases}
\varphi(x)+\varepsilon^2 < C_1,\\
\varphi(x) + \varepsilon^2 + H(x,D\varphi(x)) < 0 
\end{cases}
\end{equation*}
for all $x\in B(x_0,2\varepsilon)\cap \overline{\Omega}$. Clearly, $x\mapsto \varphi(x) + \varepsilon^2$ is a viscosity subsolution to \eqref{HJ-static} in $B(x_0,2\varepsilon)\cap \overline{\Omega}$ and $u(x) \geq \varphi(x) + \varepsilon^2$ for $x\in B(x,2\varepsilon)\backslash B(x_0,\varepsilon)$. Let us define $w:\overline{\Omega}\rightarrow \mathbb{R}$ by
\begin{equation*}
w(x) = \begin{cases}
\max\left\lbrace u(x), \varphi(x) + \varepsilon^2\right\rbrace &\qquad x\in B(x_0,\varepsilon)\cap \overline{\Omega},\\
u(x) &\qquad x\in \overline{\Omega}\backslash B(x_0,\varepsilon).
\end{cases}
\end{equation*}
Then, $w(x) = \max\left\lbrace u(x),\varphi(x) + \varepsilon^2\right\rbrace$ in $B(x_0,2\varepsilon)\cap \overline{\Omega}$ belongs to $\mathcal{F}$. Therefore, $w(x)$ is a viscosity subsolution to \eqref{HJ-static}. However, $w(x_0) = \varphi(x_0)+\varepsilon^2 = u(x_0)+\varepsilon^2 > u(x_0)$, which is a contradiction to the definition of $u$.
\end{proof}

The argument used in the proof of Perron's method implies the following corollary as well (see also \cite{Capuzzo-Dolcetta1990}).

\begin{cor}\label{cor:Inv_max} Let $u\in \mathrm{C}(\overline{\Omega})$ be a viscosity subsolution to \eqref{HJ-static} in $\Omega$. 
Assume further that $v\leq u$ on $\overline{\Omega}$ for all viscosity subsolutions $v\in \mathrm{C}(\overline{\Omega})$ of \eqref{HJ-static} in $\Omega$. 
Then, $u$ is a viscosity supersolution to \eqref{HJ-static} on $\overline{\Omega}$.
\end{cor}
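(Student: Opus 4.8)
The plan is to argue by contradiction, reusing verbatim the supersolution part of the proof of Theorem~\ref{Perron}. Suppose $u$ is \emph{not} a viscosity supersolution of \eqref{HJ-static} on $\overline{\Omega}$. Then there exist $x_0\in\overline{\Omega}$ and $\varphi\in\mathrm{C}^1(\overline{\Omega})$ such that $u-\varphi$ has a local minimum over $\overline{\Omega}$ at $x_0$ and $\varphi(x_0)+H(x_0,D\varphi(x_0))<0$. By subtracting the constant $(u-\varphi)(x_0)$ from $\varphi$ we may assume $u(x_0)=\varphi(x_0)$, and by replacing $\varphi(x)$ with $\varphi(x)-|x-x_0|^2$ (which changes neither $D\varphi(x_0)$ nor the sign of $\varphi(x_0)+H(x_0,D\varphi(x_0))$) we may assume the minimum is strict in the quantitative form $(u-\varphi)(x)\ge |x-x_0|^2$ for all $x\in B(x_0,r)\cap\overline{\Omega}$, for some $r>0$.

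Next I would use continuity of $H$ and of $D\varphi$ to pass to a strict subsolution on a small neighborhood. Since $x\mapsto\varphi(x)+H(x,D\varphi(x))$ is continuous and negative at $x_0$, one can choose $\varepsilon\in(0,r/2)$ small enough that
\[
\varphi(x)+\varepsilon^2+H(x,D\varphi(x))<0\qquad\text{for all } x\in B(x_0,2\varepsilon)\cap\overline{\Omega},
\]
so that $\psi(x):=\varphi(x)+\varepsilon^2$ is a classical, hence viscosity, subsolution of \eqref{HJ-static} in $B(x_0,2\varepsilon)\cap\overline{\Omega}$. Moreover, on the annular region $\varepsilon\le |x-x_0|\le 2\varepsilon$ one has $u(x)\ge\varphi(x)+|x-x_0|^2\ge\varphi(x)+\varepsilon^2=\psi(x)$, so $u\ge\psi$ there.

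Now I would glue: define $w:\overline{\Omega}\to\mathbb{R}$ by $w(x)=\max\{u(x),\psi(x)\}$ for $x\in B(x_0,\varepsilon)\cap\overline{\Omega}$ and $w(x)=u(x)$ for $x\in\overline{\Omega}\setminus B(x_0,\varepsilon)$. Because $u\ge\psi$ on the annulus, $w$ coincides with $\max\{u,\psi\}$ on all of $B(x_0,2\varepsilon)\cap\overline{\Omega}$ and with $u$ away from $x_0$; hence $w\in\mathrm{C}(\overline{\Omega})$, and near every point of $\Omega$ it equals either $u$ or the pointwise maximum of two viscosity subsolutions of \eqref{HJ-static}, so by stability of viscosity subsolutions under maxima, $w$ is a viscosity subsolution of \eqref{HJ-static} in $\Omega$. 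By the maximality hypothesis on $u$, we get $w\le u$ on $\overline{\Omega}$; but $w(x_0)=\psi(x_0)=\varphi(x_0)+\varepsilon^2=u(x_0)+\varepsilon^2>u(x_0)$, a contradiction. Hence $u$ is a viscosity supersolution on $\overline{\Omega}$.

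As for the difficulty: everything is soft once the gluing is in place, and the only point needing care is checking that $w$ is a genuine viscosity subsolution across the sphere $|x-x_0|=\varepsilon$ where the two definitions are joined. This is handled, as usual, by observing that $w$ in fact agrees with $\max\{u,\psi\}$ on the larger ball $B(x_0,2\varepsilon)\cap\overline{\Omega}$, so there is no real interface to analyze and the maximum of subsolutions is a subsolution. A minor simplification compared with the proof of Theorem~\ref{Perron} is that here there is no upper barrier $C_1$ to preserve, so no smallness requirement on $\varepsilon$ is needed beyond $\varepsilon<r/2$ together with the displayed strict-subsolution inequality.
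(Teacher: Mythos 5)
Your proof is correct and is essentially the same argument the paper invokes: the paper derives this corollary by re-running the supersolution verification from Perron's method (Theorem \ref{Perron}), which is precisely what you do, and your observation that here no upper barrier $C_1$ or Lipschitz bound $C_3$ needs to be preserved (since the maximality hypothesis is over all continuous subsolutions, not a constrained family $\mathcal{F}$) is the correct minor simplification.
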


The uniqueness of \eqref{state-def} follows from the comparison principle. It was first studied by M. Soner in \cite{Soner1986} under the following assumption on $\partial\Omega$:
 \begin{itemize}
\item[(A)] There exists a universal pair $(r,h)\in (0,\infty)\times (0,\infty)$ and a uniformly bounded continuous function $\eta\in \mathrm{BUC}\left(\overline{\Omega};\mathbb{R}^n\right)$ such that
\begin{equation}\label{A1}
B\big(x+t\eta(x),rt\big) \subset \Omega \qquad\text{for all}\; x\in \partial \Omega,\; t\in (0,h]. \tag{A}
\end{equation}
\end{itemize}
See also \cite{Capuzzo-Dolcetta1990} for other conditions to establish the comparison principle.

\begin{thm}\label{CP continuous} 
Assume $\eqref{A1}$.
 If $v_1\in \mathrm{BUC}(\overline{\Omega};\mathbb{R})$ is a viscosity subsolution of \eqref{HJ-static} in $\Omega$, and $v_2\in \mathrm{BUC}(\overline{\Omega};\mathbb{R})$ is a viscosity supersolution of \eqref{HJ-static} on $\overline{\Omega}$. If either 
\begin{itemize}
\item \eqref{H3a} holds, or
\item \eqref{H3b} holds and $v_2$ is Lipschitz,
\end{itemize}
then $v_1(x)\leq v_2(x)$ for all $x\in \overline{\Omega}$.
\end{thm}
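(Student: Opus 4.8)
The plan is a proof by contradiction via the doubling-of-variables method, combined with Soner's device for moving the subsolution test point off the boundary. Write $M:=\sup_{\overline{\Omega}}(v_1-v_2)$ and suppose $M>0$; we must reach a contradiction. If $\Omega$ is unbounded (as in prototype (P1), where in fact $\partial\Omega=\emptyset$ and assumption (A) is vacuous, so the statement reduces to the classical comparison) we first add a vanishing term $-\gamma\sqrt{1+|x|^2}$ to localize the supremum; since $v_1-v_2$ is bounded this does not change $\limsup_{\gamma\to0^+}$ of the penalized supremum, and the extra gradient, of size $\le\gamma$, perturbs $H$ only by $\omega(\gamma)$-type quantities, to be absorbed at the end. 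So assume $\overline{\Omega}$ is compact. We may also mollify $\eta$ and thereby assume $\eta\in \mathrm{C}^1(\overline{\Omega})$, since (A) persists with slightly smaller constants $(r,h)$ once $\|\eta_\nu-\eta\|_{L^\infty}$ is small, and after rescaling $\eta$ we may take $\|\eta\|_{L^\infty}$ as small as we wish relative to $r$.

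The core step is the doubling with the inward shift. For $\varepsilon>0$ and a second parameter $t=t(\varepsilon)\to0$ chosen below, consider
\[
\Phi_{\varepsilon,t}(x,y)=v_1(x)-v_2(y)-\frac{|x-y-t\,\eta(y)|^2}{2\varepsilon},
\]
and let $(x_t,y_t)\in\overline{\Omega}\times\overline{\Omega}$ be a maximizer. Standard arguments give $\Phi_{\varepsilon,t}(x_t,y_t)\ge M-\frac{t^2\|\eta\|_{L^\infty}^2}{2\varepsilon}$, and, using uniform continuity of $v_2$, the penalization bound $|x_t-y_t-t\eta(y_t)|^2\le 2\varepsilon\,\omega_{v_2}(|x_t-y_t|)+t^2\|\eta\|_{L^\infty}^2$, whence $|x_t-y_t|\to0$. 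Set $p_t:=\frac{x_t-y_t-t\eta(y_t)}{\varepsilon}$. The role of the shift is that $x_t$ is forced close to $y_t+t\eta(y_t)$: when $y_t$ is near $\partial\Omega$, this together with (A) at the nearest boundary point and the continuity of $\eta$ places $x_t$ in the \emph{open} set $\Omega$, provided $t$ is large enough relative to the penalization spread; when $y_t$ is bounded away from $\partial\Omega$, the smallness of $|x_t-y_t|$ already gives $x_t\in\Omega$. This is exactly where (A) is indispensable, and also where the two alternative hypotheses enter: under (H3b) with $v_2$ Lipschitz one has an a priori bound $|p_t|\le C$ (by comparing $y_t$ with nearby competitors, the state-constraint directions being covered by (A)), so the spread is $\mathcal{O}(\varepsilon)$ and a choice like $t=\varepsilon^{2/3}$ works; under (H3a) one instead exploits that the modulus in $|H(x,p)-H(y,p)|\le\omega_H(|x-y|(1+|p|))$ tolerates unbounded $p$, calibrating $t$ accordingly (alternatively one avoids the shift here and replaces $v_1$ by a family $v_1^\lambda\to v_1$ that are subsolutions on a full neighborhood of $\overline{\Omega}$, obtained by flowing along $\eta$, thereby removing the boundary issue before doubling).

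With $x_t\in\Omega$ secured, the remainder is routine. The subsolution inequality for $v_1$ at $x_t$ with test gradient $p_t$ gives $v_1(x_t)+H(x_t,p_t)\le0$, and the supersolution inequality for $v_2$ at $y_t\in\overline{\Omega}$ with test gradient $\tilde p_t:=\big(I+t\,D\eta(y_t)\big)^{\!\top}p_t$ gives $v_2(y_t)+H(y_t,\tilde p_t)\ge0$. Subtracting,
\[
M-\tfrac{t^2\|\eta\|_{L^\infty}^2}{2\varepsilon}\le \Phi_{\varepsilon,t}(x_t,y_t)\le v_1(x_t)-v_2(y_t)\le \big[H(y_t,\tilde p_t)-H(y_t,p_t)\big]+\big[H(y_t,p_t)-H(x_t,p_t)\big],
\]
and the two brackets are controlled by the moduli of $H$ at the arguments $|\tilde p_t-p_t|\le t\|D\eta\|_{L^\infty}|p_t|$ and $|x_t-y_t|$ (respectively by $|x_t-y_t|(1+|p_t|)$ under (H3a)); the calibration of $t$ against $\varepsilon$ sends all of these to $0$. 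Letting $\varepsilon\to0$ yields $M\le0$, a contradiction; finally send $\gamma\to0^+$ to cover the unbounded case.

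The crux is the middle step: guaranteeing $x_t\in\Omega$. The obstruction is structural, since the subsolution property holds only in the open set, and one must engineer the doubling so the maximizing $x$-slot lands inside without the engineering (the $\eta$-shift) destroying the vanishing of the Hamiltonian error terms. Assumption (A) is precisely what makes this possible, and reconciling ``the shift must be large enough to push $x_t$ into $\Omega$'' with ``the shift must be small enough that $t\|D\eta\|_{L^\infty}|p_t|$ and its companion error vanish'' is the delicate point — carried out with the Lipschitz bound on $v_2$ in case (H3b), or with the growth-tolerant modulus $\omega_H$ in case (H3a).
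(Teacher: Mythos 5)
The paper does not actually prove Theorem~\ref{CP continuous}: after stating assumption \eqref{A1} it simply attributes the result to Soner \cite{Soner1986} and points to Capuzzo-Dolcetta--Lions \cite{Capuzzo-Dolcetta1990} for variants, so there is no ``paper's own proof'' to compare you against. Your sketch follows exactly the Soner/Capuzzo-Dolcetta--Lions strategy (doubling of variables with the inward $\eta$-shift, using \eqref{A1} to force the subsolution test point into the open set $\Omega$), which is the right approach, and your identification of the crux --- reconciling ``$t$ large enough to push $x_t$ into $\Omega$'' with ``$t$ small enough that the Hamiltonian errors vanish'' --- is precisely the delicate point in those references.

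That said, two of the specific claims in the sketch are not correct as written. First, the a priori bound $|p_t|\le C$ you invoke under \eqref{H3b} with $v_2$ Lipschitz is not immediate: the penalty is minimized in $y$ only over $\overline{\Omega}$, so when $y_t\in\partial\Omega$ the Lipschitz bound on $v_2$ constrains $\tilde p_t$ only against inward competitor directions, and the geometric fact that assumption \eqref{A1} typically forces $r<\|\eta\|_{L^\infty}$ (e.g.\ for a ball with $\eta(x)=-x$ one has $r<1=\|\eta\|$) means the naive cone estimate $(r-\|\eta\|)|\tilde p_t|\le C$ does not close. One must either obtain the gradient bound from the $x$-side (which requires $v_1$ Lipschitz, not assumed) or use a genuinely different device, such as a preliminary sup-convolution to regularize the subsolution before doubling. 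Second, the calibration $t=\varepsilon^{2/3}$ is asserted without justification and is not right in general: the only information available is $|w|^2\le 2\varepsilon\,\omega_{v_1}(|w|)$, where $w=x_t-y_t-t\eta(y_t)$ and $\omega_{v_1}$ is the (unknown) modulus of $v_1$, so the admissible window for $t$ depends on $\omega_{v_1}$. A calibration that does work is to set $t\asymp\Psi(\varepsilon)$ with $\Psi(\varepsilon):=\sup\{s: s^2\le 2\varepsilon\,\omega_{v_1}(s)\}$, since then $|w|\le\Psi<rt$ guarantees $x_t\in\Omega$ while $t|p_t|\lesssim\Psi^2/\varepsilon\le 2\omega_{v_1}(\Psi)\to0$; a fixed power of $\varepsilon$ fails for moduli $\omega_{v_1}$ that are too weak (e.g.\ a H\"older modulus $s^\beta$ with $\beta\le\tfrac12$ defeats $t=\varepsilon^{2/3}$). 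Your alternative suggestion of translating $v_1$ inward along the $\eta$-flow to remove the boundary obstruction before doubling is a sound way around both issues and is closer to how the cited references actually argue.
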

When the uniqueness of \eqref{state-def} is guaranteed, the unique viscosity solution to \eqref{state-def} is the maximal viscosity subsolution of \eqref{HJ-static}. This property will play a crucial role in dealing with the second prototype $(\mathrm{P2})$.


\section{A rate of convergence for general Hamiltonians in unbounded domain}\label{P1-doubling}

In this section, we consider the first prototype (P1). 
The assumptions \eqref{H0}, \eqref{H1}, \eqref{H3c} and \eqref{H4} are enforced throughout the section. 
By Theorems \ref{Perron} and \ref{CP continuous}, there exists $u_k\in \mathrm{Lip}(\overline{B(0,k)})$ which is the unique solution to 
\begin{equation}\label{u_k}
\begin{cases}
u_k(x) + H(x,Du_k(x)) \leq 0 &\quad \text{in}\;B(0,k),\\
u_k(x) + H(x,Du_k(x)) \geq 0 &\quad \text{on}\;\overline{B(0,k)}
\end{cases}
\end{equation}
in the viscosity sense. 
Based on the construction of solutions via Perron's method together with the coercivity of $H$, we have the following a priori estimate:
\begin{equation*}
|u_k(x)|+|Du_k(x)| \leq C_H
\end{equation*}
for all $x\in B(0,k)$ in the viscosity sense. Here, $C_H$ is a positive constant depending only on $H$ (one can take $C_H = \max\{C_1,C_2,C_3\}$ from Theorem \ref{Perron}). By the Arzel\`a--Ascoli theorem, there is a subsequence $\{k_m\} \to \infty$, and a function $u\in \mathrm{Lip}(\R^n)$ such that 
\begin{equation}\label{u_def_nonconvex}
u_{k_m}\rightarrow u \qquad\text{locally uniformly in}\; \mathbb{R}^n.
\end{equation}

\begin{thm} The function $u$ defined in \eqref{u_def_nonconvex} is a viscosity solution to
\begin{equation}\label{u_eqn}
u(x) + H(x,Du(x)) = 0 \qquad\text{in}\;\mathbb{R}^n.
\end{equation}
Moreover, $u_k\rightarrow u$ locally uniformly in $\mathbb{R}^n$ as $k$ grows to infinity.
\end{thm}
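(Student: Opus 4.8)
The plan is to establish two separate facts: first, that the locally uniform subsequential limit $u$ from \eqref{u_def_nonconvex} is a viscosity solution of \eqref{u_eqn} on all of $\mathbb{R}^n$; and second, that in fact the \emph{entire} sequence $\{u_k\}$ converges to $u$ locally uniformly. The first part is a routine stability argument for viscosity sub- and supersolutions. The second part carries the real content, and I would deduce it from a monotonicity property of the $u_k$ obtained via the comparison principle (Theorem \ref{CP continuous}).

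For the PDE, I would treat the sub- and supersolution properties separately. Given $x_0\in\mathbb{R}^n$ and $\varphi\in\mathrm{C}^1$ such that $u-\varphi$ has a strict local maximum at $x_0$ over a closed ball $\overline{B(x_0,\rho)}$, note that $\overline{B(x_0,\rho)}\subset B(0,k_m)$ once $m$ is large, so $u_{k_m}$ is a viscosity subsolution of \eqref{HJ-static-k} there; the standard argument produces maximum points $x_m\to x_0$ of $u_{k_m}-\varphi$, the inequality $u_{k_m}(x_m)+H(x_m,D\varphi(x_m))\le 0$, and then $u(x_0)+H(x_0,D\varphi(x_0))\le 0$ in the limit by continuity of $H$ and the locally uniform convergence $u_{k_m}\to u$. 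The supersolution property is handled the same way, using the key observation that every point of $\mathbb{R}^n$ is an \emph{interior} point of $B(0,k_m)$ as soon as $k_m>|x_0|$, so the interior supersolution inequality for $u_{k_m}$ — part of its being a supersolution on $\overline{B(0,k_m)}$ — applies near $x_0$ and passes to the limit. Since $\mathbb{R}^n$ has no boundary, this shows $u$ is a viscosity solution of \eqref{u_eqn}.

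For the convergence of the full sequence, the first step is to prove $u_j\le u_k$ on $\overline{B(0,k)}$ whenever $j>k$: the restriction of $u_j$ to $\overline{B(0,k)}$ is a Lipschitz viscosity subsolution of $w+H(x,Dw)\le 0$ in $B(0,k)$ (because $B(0,k)\subset B(0,j)$), while $u_k$ is the Lipschitz state-constraint solution in $B(0,k)$, so Theorem \ref{CP continuous} — applicable since $\partial B(0,k)$ satisfies \eqref{A1} and \eqref{H3c} implies \eqref{H3b} with $u_k$ Lipschitz — gives the inequality. Hence for each fixed $x$ the sequence $\big(u_k(x)\big)_{k>|x|}$ is non-increasing and bounded below by $-C_H$, so it converges to some $\tilde u(x)$; being a pointwise limit of $C_H$-Lipschitz functions, $\tilde u$ is $C_H$-Lipschitz, hence continuous, so Dini's theorem upgrades the monotone pointwise convergence $u_k\to\tilde u$ to locally uniform convergence. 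Finally $u_{k_m}\to\tilde u$ pointwise forces $u=\tilde u$, which completes the proof. The main obstacle — really the only non-mechanical point — is this monotonicity step: one must invoke that the state-constraint solution is the maximal subsolution and check carefully that the hypotheses of the comparison principle hold on each ball $B(0,k)$.
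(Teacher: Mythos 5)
Your proof is correct, but for the second half you take a genuinely different route from the paper's. The paper handles the PDE identification by citing stability (as you do, though more compactly), and then deduces convergence of the \emph{full} sequence from the uniqueness of bounded, Lipschitz viscosity solutions to \eqref{u_eqn} on all of $\mathbb{R}^n$: since the family $\{u_k\}$ is precompact in $\mathrm{C}_{\mathrm{loc}}$ and every subsequential limit solves the same equation, uniqueness forces all limits to coincide, hence the whole sequence converges. You instead extract convergence from a \emph{monotonicity} property, $u_j\le u_k$ on $\overline{B(0,k)}$ for $j>k$, obtained via the state-constraint comparison principle (Theorem~\ref{CP continuous}) on each fixed ball, and then combine uniform Lipschitz bounds with Dini's theorem. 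Both arguments are sound. The paper's is shorter and leans on a comparison/uniqueness result for the limit equation on the unbounded domain $\mathbb{R}^n$; yours stays entirely within comparison on bounded balls — which the paper has explicitly stated — and as a byproduct recovers the ordering $u\le u_k$ that the paper later re-derives separately in Theorem~\ref{thm:approx1/k^2}(i). Your hypothesis checks (that $\partial B(0,k)$ satisfies \eqref{A1}, that \eqref{H3c}$\Rightarrow$\eqref{H3b}, and that the supersolution $u_k$ is Lipschitz so the appropriate branch of Theorem~\ref{CP continuous} applies) are exactly the ones that need verifying, and they do hold.
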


\begin{proof} 
It is clear from the stability of viscosity solutions that $u$ is a solution to \eqref{u_eqn}. 
The fact that $u_k\rightarrow u$ locally uniformly in $\R^n$ follows from the uniqueness of  solutions to \eqref{u_eqn}.
\end{proof}

Now we are ready to give a proof for Theorem \ref{thm:approx1/k^2} using the doubling variables method.

\begin{proof}[Proof of Theorem \ref{thm:approx1/k^2}] 
We first note that $u_k$ solves $u_k(x)+H(x,Du_k(x)) \geq 0$ on $\overline{B(0,k)}$, and $u$ solves $u(x)+H(x,Du(x)) \leq 0$ in $B(0,k)$ in viscosity sense. By the comparison principle, we get $u_k(x) \geq u(x)$ for all $x\in \overline{B(0,k)}$.

For the upper bound of $u_k - u$, we define the following auxiliary function
\begin{equation*}
\Phi^k(x,y) = u_k(x) - u(y) - 2C_H k^{2}\left|x-y\right|^2 - \frac{8C_H}{k^{2}}|y|^2
\end{equation*}
for $(x,y)\in \overline{B(0,k)}\times\mathbb{R}^n$. 
It is clear that $\Phi^k$ is bounded above by $2C_H$ independent of $k\in \mathbb{N}$. 
If $|y|>\frac{k}{2}$, then we have
\begin{align*}
\Phi^k(0,0)-\Phi^k(x,y)  \geq -u_k(x) +{u_k(0) - u(0)} + u(y)+2C_H k^{2}|x-y|^2 + \frac{8C_H}{k^{2}}|y|^2>0,
\end{align*}
which implies that for each $k\in \mathbb{N}$,  $\Phi^k(x,y)$ achieves a global maximum over $\overline{B(0,k)}\times \mathbb{R}^n$ at $(x_k,y_k)\in \overline{B(0,k)}\times \overline{B\left(0,\frac{k}{2}\right)}$. 
Of course, $|y_k| \leq \frac{k}{2}$. 
Now we use $\Phi^k(x_k,y_k)\geq \Phi^k(y_k,y_k)$ to get
\begin{equation*}
2C_Hk^{2}|x_k-y_k|^2 \leq u_k(x_k) - u_k(y_k) \leq C_H|x_k-y_k|.
\end{equation*}
Therefore,  we deduce that
\begin{equation}\label{nba1}
|x_k| \leq  |y_k| + \frac{1}{2k^{2}} < k
\end{equation}
for all $k\geq 1$ since $|y_k| \leq \frac{k}{2}$. 
Observing that $x\mapsto \Phi^k(x,y_k)$ obtains a maximum at $x_k$ with $|x_k|<k$,  we have
\begin{equation}\label{naa.sub}
u_k(x_k) + H\left(x_k,p_k\right) \leq 0,
\end{equation}
where $p_k = 4C_H k^{2}(x_k-y_k)$ by the definition of viscosity subsolutions. 
We also observe that $y\mapsto \Phi^k(x_k,y)$ obtains a maximum at $y_k$, which implies that 
\begin{equation*}
u(y) - \left(-2C_Hk^{2}\left|x_k-y_k\right|^2 - \frac{8C_H}{k^{2}}|y|^2\right)
\end{equation*}
has a minimum at $y_k$. 
By the definition of viscosity supersolutions, we get
\begin{equation}\label{naa.super}
u(y_k) + H(y_k,p_k+q_k) \geq 0
\end{equation}
where $q_k = -\frac{16C_H}{k^{2}} y_k$. 
Here, it needs to be noted that
\begin{equation*}
|p_k|,|p_k+q_k| \leq C_H,
\end{equation*}
which comes from Lipschitz continuity of $u_k$. 
Using \eqref{naa.sub}, \eqref{naa.super} and assumption \eqref{H3c}, there exists a constant $\tilde{C}_H$ such that
\begin{align}\label{naa.sum}
u_k(x_k) - u(y_k) &\leq H(y_k,p_k+q_k) - H(x_k,p_k)\nonumber\\
&= H(y_k,p_k+q_k) - H(y_k,p_k) + H(y_k,p_k) - H(x_k,p_k) \nonumber\\
&\leq \tilde{C}_H|q_k| + \tilde{C}_H|x_k-y_k| \nonumber\\
&\leq \frac{16\tilde{C}_HC_H}{k^{2}}|y_k| + \frac{\tilde{C}_H}{k^{2}}
\leq \frac{8 \tilde{C}_HC_H}{k} + \frac{\tilde{C}_H}{k^{2}}.
\end{align}
If we stop here, the fact that $\Phi^k(x_k,y_k) \geq \Phi^k(x,x)$ for $x \in B(0,k)$ gives 
\begin{align*}
u_k(x) - u(x)\leq u_k(x_k) - u(y_k) + \frac{8C_H}{k^{2}}|x|^2 \leq \frac{C}{k} + \frac{C(1+|x|^2)}{k^2}
\end{align*}
for all $k \geq 2$.
This gives us the rate of convergence of $u_k$ to $u$ is $\mathcal{O}(\tfrac{1}{k})$ for $x\in B(0,R)$, which is typically the case in light of the doubling variables method.

\medskip

Nevertheless, a key new point here is to bootstrap once more to improve this rate.
The monotonicity of $\{u_k\}$ allows us to bound $|y_k|$ better.
We use that $\Phi^k(x_k,y_k)\geq \Phi^k(0,0)$ together with \eqref{naa.sum} and $u_k \geq u$ to yield
\begin{align*}
2C_H k^{2}|x_k-y_k|^2 + \frac{8C_H}{k^{2}}|y_k|^2 &\leq u_k(x_k) - u_k(0) + u(0)- u(y_k) \\
&\leq u_k(x_k) - u(y_k)\\
&\leq \frac{16\tilde{C}_HC_H}{k^2}|y_k| + \frac{\tilde{C}_H}{k^{2}}.
\end{align*}
Therefore, 
\begin{equation*}
|y_k|^2 \leq 2\tilde{C}_H |y_k|+\frac{\tilde{C}_H}{8C_H} \leq \frac{1}{2} |y_k|^2 + 2\tilde C_H^2 + \frac{\tilde{C}_H}{8C_H} = \frac{1}{2} |y_k|^2 + C.
\end{equation*}
In particular, $|y_k| \leq C$.
This bound is much better than the earlier bound that $|y_k| \leq \frac{k}{2}$.

\medskip

Now for any $x\in \overline{B(0,k)}$, clearly we have that $\Phi^k(x_k,y_k) \geq \Phi^k(x,x)$.
This, together with \eqref{naa.sum} and $|y_k| \leq C$, implies
\begin{align*}
u_k(x) - u(x)\leq u_k(x_k) - u(y_k) + \frac{8C_H}{k^{2}}|x|^2 \leq \frac{C(1+|x|^2)}{k^2}
\end{align*}
for all $k \geq 2$. 
If $|x|\leq R$, then
\begin{equation*}
0\leq u_k(x) - u(x) \leq \frac{C(1+R^2)}{k^2},
\end{equation*}
which gives the desired result.
\end{proof}

\begin{rem} 
In the general setting, one only has that $\Phi^k(x,y)$ achieves a global maximum over $\overline{B(0,k)}\times \mathbb{R}^n$ at $(x_k,y_k)$ where $|y_k| \leq \frac{k}{2}$ and $|x_k| < k$.
In our current situation, the monotonicity of $\{u_k\}$ allows us to bootstrap once more to deduce further that $|y_k| \leq C$,
which helps to obtain $\mathcal{O}\left(\frac{1}{k^2}\right)$ rate of convergence.
This seems to be the best convergence rate that one is able to get through the doubling variables method here as it is unlikely that $|y_k|$ vanishes as $k \to \infty$.

We do not know yet whether the $\mathcal{O}\left(\frac{1}{k^2}\right)$ rate of convergence is optimal or not in the general nonconvex setting.
See Questions \ref{Q1} and \ref{Q2} in Section \ref{Sec:discussion} below.
\end{rem}

\section{An optimal rate for a class of nonconvex Hamiltonians on unbounded domain}\label{Sec:a(x)K(p)}

In this section, we show that the rate of convergence $u_k\rightarrow u$ is of order $\mathcal{O}(e^{-Ck})$ for a class of possibly nonconvex Hamiltonians which are written as $H(x,p) = a(x)K(p)$ with $K(0)=0$ and $0<\alpha \leq a(x) \leq \beta$. The aforementioned rate is indeed optimal. 

A brief idea for the proof is that we construct a supersolution to \eqref{u_k} by finding a symmetric Hamiltonian $\tilde{H}$ such that $\tilde{H}(0) = 0$ and $\tilde{H}\leq H$. The following proposition is needed as a building block.

\begin{prop}\label{prop:linear} Let $H:\mathbb{R}^n\rightarrow\mathbb{R}$ be defined by
\begin{equation*}
H(p) = \begin{cases}
-\alpha|p| & \text{for}\; |p|\leq \beta,\\
\;\;\; f(p)  &\text{for}\; |p|\geq \beta,
\end{cases}
\end{equation*}
where $\alpha,\beta>0$ and $f:\mathbb{R}^n\rightarrow\mathbb{R}$ is a coercive continuous function such that $f(p) = -\alpha \beta$ for $|p|= \beta$ and $\min_{\mathbb{R}^n} f = -\alpha \beta$. Then,
\begin{equation*}
u_k(x) = \alpha \beta e^{\frac{|x|-k}{\alpha}}
\end{equation*}
for $x\in \overline{B(0,k)}$ is the unique solution to the state-constraint problem \eqref{u_k}.
\end{prop}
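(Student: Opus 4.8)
The plan is to verify directly that the explicit candidate $u_k(x)=\alpha\beta\,e^{(|x|-k)/\alpha}$ is a state-constraint viscosity solution of \eqref{u_k}, and then to invoke the comparison principle for uniqueness. First I would record that $u_k\in\mathrm{Lip}\big(\overline{B(0,k)}\big)$: away from the origin it is smooth with $Du_k(x)=\beta\,e^{(|x|-k)/\alpha}\,\tfrac{x}{|x|}$, so $|Du_k(x)|=\beta\,e^{(|x|-k)/\alpha}\le\beta$ on $\overline{B(0,k)}$, and it is continuous at $0$. Since $H$ here is continuous, depends only on $p$, is bounded below by $-\alpha\beta$ with $H(0)=0$, and is coercive (because $f$ is), the hypotheses \eqref{H0}, \eqref{H1}, \eqref{H3b}, \eqref{H4} all hold, and $B(0,k)$ satisfies \eqref{A1}; hence Theorems \ref{Perron} and \ref{CP continuous} already guarantee a unique Lipschitz state-constraint solution of \eqref{u_k}, so it suffices to check that $u_k$ is such a solution.

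Next I would treat the three regimes separately. At an interior point $x_0\in B(0,k)\setminus\{0\}$, $u_k$ is $C^1$ with $|Du_k(x_0)|=\beta\,e^{(|x_0|-k)/\alpha}\le\beta$, so $H(Du_k(x_0))=-\alpha|Du_k(x_0)|=-u_k(x_0)$ and both viscosity inequalities hold with equality. At a boundary point $x_0$ with $|x_0|=k$, $u_k$ is still $C^1$ near $x_0$ with $Du_k(x_0)=\beta\,\tfrac{x_0}{|x_0|}$; if $\varphi\in C^1$ makes $u_k-\varphi$ have a local minimum over $\overline{B(0,k)}$ at $x_0$, the standard boundary argument yields $D\varphi(x_0)=(\beta+\lambda)\tfrac{x_0}{|x_0|}$ for some $\lambda\ge0$, so $|D\varphi(x_0)|\ge\beta$ and $H(D\varphi(x_0))\ge\min_{\mathbb R^n}f=-\alpha\beta=-u_k(x_0)$, which gives the supersolution inequality; this is exactly where the assumption $\min f=-\alpha\beta$ is used.

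The delicate point, which I expect to require the most care, is the conical kink at the origin. Using $u_k(x)=u_k(0)+\beta e^{-k/\alpha}|x|+o(|x|)$ as $x\to0$, one computes $D^{+}u_k(0)=\emptyset$ and $D^{-}u_k(0)=\overline{B\!\left(0,\beta e^{-k/\alpha}\right)}$. Emptiness of $D^{+}u_k(0)$ makes the subsolution inequality at $0$ vacuous. For the supersolution inequality, any $p\in D^{-}u_k(0)$ satisfies $|p|\le\beta e^{-k/\alpha}\le\beta$, hence $H(p)=-\alpha|p|\ge-\alpha\beta e^{-k/\alpha}=-u_k(0)$, i.e.\ $u_k(0)+H(p)\ge0$; this is the place where the normalization $H(0)=0$ (the analogue of $K(0)=0$) is essential, since it ties $u_k(0)$ to the linear branch $-\alpha|p|$. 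Combining the three regimes shows $u_k$ is a state-constraint viscosity solution of \eqref{u_k}; being Lipschitz, it must equal the unique solution furnished by Theorems \ref{Perron} and \ref{CP continuous} (compare $u_k$, as subsolution, with that solution, as Lipschitz supersolution, and vice versa). The only genuine obstacle is the viscosity bookkeeping at the two non-classical locations $x=0$ and $|x|=k$, where the two special features of $H$ — the $-\alpha|p|$ branch near the origin and $\min f=-\alpha\beta$ — enter; the interior computation is a one-line check.
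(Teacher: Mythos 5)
Your proof is correct and follows essentially the same route as the paper: verify that the explicit formula is a state-constraint viscosity solution by checking the three regimes (interior away from $0$, boundary $|x|=k$, and the conical point at $0$) and then invoke the comparison principle for uniqueness. The only cosmetic differences are that you phrase the origin computation via the sub/super-differentials $D^\pm u_k(0)$ where the paper reduces to a linear test function using convexity, and your boundary argument computes $D\varphi(x_0)$ via the normal cone whereas the paper simply observes $u_k(x_0)=\alpha\beta=-\min_{\mathbb{R}^n}H$, which makes the supersolution inequality at $\partial B(0,k)$ immediate without any gradient analysis.
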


\begin{proof} It is clear that $u_k(x) + H(Du_k(x)) = 0$ in $B(0,k)\backslash \{0\}$ in classical sense. For $x \in \partial B(0,k)$ and $\varphi\in \mathrm{C}^1(\overline{B(0,k)})$ such that $u_k-\varphi$ has a local minimum over $\overline{B(0,k)}$ at $x$, we have $u_k(x) + H(D\varphi(x)) \geq 0$ since $u_k(x) = \alpha \beta = -\min H$. We only need to check if $u_k$ is a viscosity supersolution at $x = 0$. 

Let $\varphi \in \mathrm{C}^1(\mathbb{R}^n)$ such that $\varphi(0) = u_k(0)$ and $u_k-\varphi$ has a local minimum over $\overline{B(0,k)}$ at $ x = 0$. Since $u_k$ is convex, we can replace $\varphi$ by an affine function $\varphi(x) =\xi \cdot x+ u_k(0)$ for some $\xi  \in \mathbb{R}^n$. Without loss of generality, it suffices to consider $\xi\neq 0$. For $|x|$ sufficiently small, we have $u_k(x) -\varphi(x) \geq u_k(0) - \varphi(0)$, which implies that
\begin{equation}\label{eqn:ex1.1}
\alpha\beta e^{-\frac{k}{\alpha}}\big(e^{\frac{|x|}{\alpha}}-1\big) \geq  \xi\cdot x.
\end{equation}

Now we choose $x = t\frac{\xi}{|\xi|}$ for $t>0$ small, then \eqref{eqn:ex1.1} implies that $\alpha\beta e^{-\frac{k}{\alpha}}\big(e^{\frac{t}{\alpha}}-1\big) \geq t|\xi|$ for all $t>0$ sufficiently small. Dividing both sides by $t$ and sending $t$ to 0, we deduce that $|D\varphi(0)| = |\xi|  \leq \beta e^{-\frac{k}{\alpha}}$. Therefore,
\begin{equation*}
u_k(0) + H(D\varphi(0)) = \alpha \beta e^{-\frac{k}{\alpha}} -\alpha |D\varphi(0)| \geq 0.
\end{equation*}
Consequently, $u_k$ is the unique viscosity  solution to \eqref{u_k}.
\end{proof}

\begin{proof}[Proof of Theorem \ref{npo}] Since $K(0) = 0$, $u\equiv 0$ is the unique solution to \eqref{u_eqn}. Recalling the a priori estimate $\Vert u_k\Vert_{L^\infty(B(0,k))}+\Vert Du_k\Vert_{L^\infty(B(0,k))}\leq C_H$, condition \eqref{H3c} gives
\begin{equation*}
|K(p) - K(q)| \leq L|p-q|
\end{equation*}
for all $p,q\in \overline{B(0,C_H)}$. Let $K(p_0) = \min K \leq 0$ for some $p_0\in \mathbb{R}^n$. 
Let $f(p)$ be a coercive, continuous function such that $f(p)=-L|p_0|$ for $|p| \leq |p_0|$, $\min_{\R^n} f = - L|p_0|$, and $f(p) \leq K(p)$ for $|p| \geq |p_0|$. Now we consider
\begin{equation*}
\tilde{H}(p) = \begin{cases}
-  L|p|                  &\quad\text{for}\; |p|\leq |p_0|,\\
\;\;\; f(p)  &\quad\text{for}\; |p|\geq |p_0|.
\end{cases}
\end{equation*}

\begin{figure}[h]
\begin{center}
\includegraphics[scale=0.4]{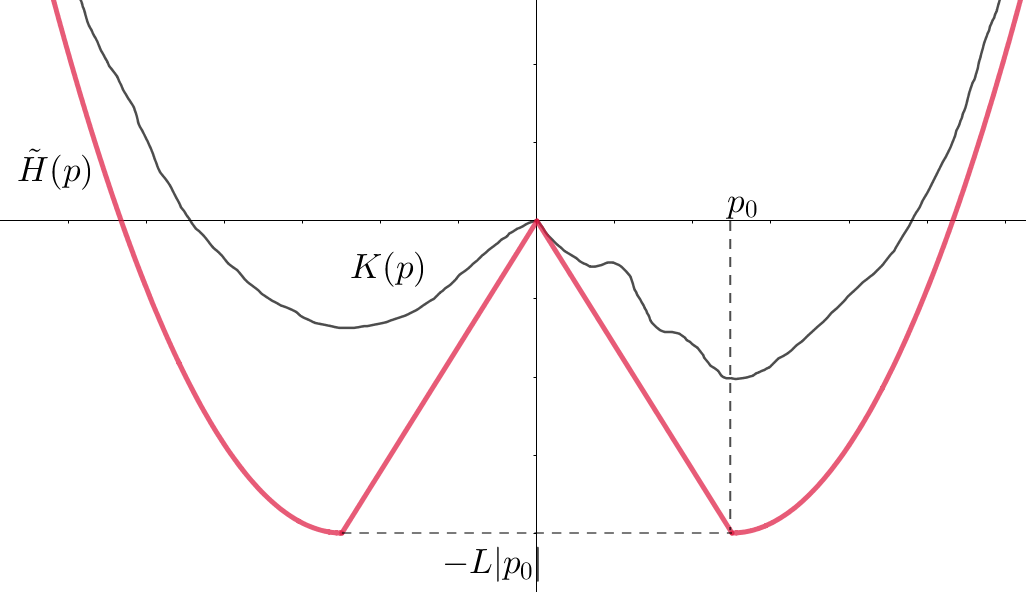}
\end{center}
\caption{The graph of $\tilde{H}(p)$ and $K(p)$.}
\label{pic:1}
\end{figure}

The graph of $\tilde{H}$ is described in Figure \ref{pic:1}. It is clear that $\tilde{H}(p)\leq K(p)$ for all $p\in \mathbb{R}^n$. Moreover, using Proposition \ref{prop:linear}, the unique viscosity solution to the state-constraint problem $\tilde{u}_k(x) + \beta\tilde{H}(D\tilde{u}_k(x)) = 0$ in $B(0,k)$ is given by $\tilde{u}_k(x) = \beta L|p_0|e^{\frac{|x|-k}{\beta L}}$ for $x\in \overline{B(0,k)}$. 
\medskip

It is clear that $\tilde{u}_k$ is also the unique viscosity solution to $\frac{1}{\beta} \tilde{u}_k(x) + \tilde{H}(D\tilde{u}_k(x)) = 0$ in $B(0,k)$. Since $\beta \geq a(x)\geq \alpha >0$ and $\tilde{H}\leq K$, we deduce that
\begin{equation*}
\tfrac{1}{\beta} \tilde{u}_k(x) + \tilde{H}(D\tilde{u}_k(x)) \leq \tfrac{1}{a(x)} \tilde{u}_k(x) + K(D\tilde{u}_k(x))
\end{equation*}
on $\overline{B(0,k)}$. Therefore, $\tilde{u}_k(x)+ a(x)K(D\tilde{u}_k(x)) \geq 0$ on $\overline{B(0,k)}$. By the comparison principle, one gets
\begin{equation*}
0\leq u_k(x) \leq \beta L |p_0| e^{\frac{|x|-k}{\beta L}}
\end{equation*}
for all $x\in \overline{B(0,k)}$. The conclusion for $|x|\leq R$ follows immediately.
\end{proof}

In case that $H(x,p)=K(p)$ for $(x,p) \in \R^n \times \R^n$, where $K$ is locally Lipschitz continuous and coercive in $\R^n$, we have the unique  viscosity solution  to \eqref{u_eqn} is $u\equiv -K(0)$. 
Therefore, we can assume that $K(0) = 0$, and Corollary \ref{cor1} follows without assuming that $K(0) = 0$. 

\medskip

It should be noted that the local Lipschitz continuity of Hamiltonians is important when it comes to getting an exponential rate of convergence. If a Hamiltonian is only H\"older continuous around $0$, we get a slower rate of convergence depending on the regularity of $H$ as described in the following proposition. 

\begin{prop} \label{example3} Let $H:\mathbb{R}^n\rightarrow\mathbb{R}$ defined by
\begin{equation*}
H(p) = \begin{cases}
-|p|^\gamma \qquad & \text{if}\;\;|p|\leq 1,\\
\;f(p)  \qquad &\text{if}\;\; |p|\geq 1,
\end{cases}
\end{equation*}
where $\gamma\in(0,1)$ and $f:\mathbb{R}^n\rightarrow\mathbb{R}$ is a continuous, coercive function with $f(x) = -1$ for $|x|\leq 1$, and $\min_{\mathbb{R}^n} f = -1$. Then, the solution to \eqref{u_k} is given by
\begin{equation}\label{eqn:ex3}
u_k(x) = \left[\tfrac{1-\gamma}{\gamma}\left(k+\tfrac{\gamma}{1-\gamma} - |x|\right)\right]^\frac{\gamma}{\gamma-1}, \qquad x\in \overline{B(0,k)}.
\end{equation}
As a consequence, $u_k\rightarrow 0$ with the rate $\mathcal{O}\left(\frac{1}{k^{\frac{\gamma}{1-\gamma}}}\right)$.
\end{prop}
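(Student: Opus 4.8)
The plan is to follow the template of Proposition \ref{prop:linear}: exhibit the explicit radial candidate, check that it solves the equation classically away from the origin, verify the viscosity super/sub-solution conditions at $x=0$ and on $\partial B(0,k)$ by hand, conclude uniqueness via the comparison principle, and then read off the rate.

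Set $\mu:=\frac{\gamma}{1-\gamma}>0$, so that $\frac{1-\gamma}{\gamma}=\frac1\mu$, $\frac{\gamma}{\gamma-1}=-\mu$, and the candidate in \eqref{eqn:ex3} becomes $u_k(x)=\mu^{\mu}\big(k+\mu-|x|\big)^{-\mu}$ for $x\in\overline{B(0,k)}$. This is a radial function, strictly increasing and strictly convex in $|x|$, and well-defined on $\overline{B(0,k)}$ since $k+\mu-|x|\ge\mu>0$ there. First I would compute $|Du_k(x)|=\mu^{\mu+1}(k+\mu-|x|)^{-\mu-1}$ and observe that $|Du_k(x)|<1$ for $|x|<k$ while $|Du_k(x)|=1$ on $\partial B(0,k)$; hence on the open ball we are exactly in the branch $H(p)=-|p|^{\gamma}$. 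Using the elementary identity $(\mu+1)\gamma=\mu$, a one-line computation gives $|Du_k(x)|^{\gamma}=\mu^{\mu}(k+\mu-|x|)^{-\mu}=u_k(x)$, so that $u_k(x)+H(Du_k(x))=u_k(x)-|Du_k(x)|^{\gamma}=0$ classically in $B(0,k)\setminus\{0\}$.

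It remains to treat $\partial B(0,k)$ and the origin. On $\partial B(0,k)$ we have $u_k(x)=\mu^{\mu}\mu^{-\mu}=1$, and since $H(p)=-|p|^{\gamma}\ge-1$ for $|p|\le1$ while $H(p)=f(p)\ge\min_{\R^n}f=-1$ for $|p|\ge1$, we get $\min_{\R^n}H=-1$; thus $u_k(x)+H(D\varphi(x))\ge1-1=0$ for any admissible test function, so the supersolution inequality holds on the boundary. At $x=0$, convexity of $u_k$ lets one (exactly as in Proposition \ref{prop:linear}) replace the test function by a linear one $\varphi(x)=\xi\cdot x+u_k(0)$; testing the local-minimum inequality along the ray $x=t\xi/|\xi|$ and letting $t\to0^{+}$ forces $|\xi|\le|Du_k|(0)<1$, and the classical identity $u_k(0)=|Du_k|(0)^{\gamma}$ together with monotonicity of $t\mapsto t^{\gamma}$ gives $u_k(0)+H(D\varphi(0))=u_k(0)-|\xi|^{\gamma}\ge0$. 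The subsolution inequality at $0$ is vacuous: since $|Du_k|(0)>0$, $u_k$ has a genuine convex corner at the origin, so $D^{+}u_k(0)=\emptyset$ and no $C^{1}$ function can touch $u_k$ from above there.

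Finally, $u_k$ is Lipschitz on $\overline{B(0,k)}$ (with constant $1$, from the gradient bound above) and $B(0,k)$ satisfies \eqref{A1}, so Theorem \ref{CP continuous}, invoked with \eqref{H3b} (which holds because $H$ is continuous, hence uniformly continuous on bounded sets), yields that $u_k$ is the unique state-constraint solution to \eqref{u_k}. Since $H(0)=0$, the solution of \eqref{u_eqn} is $u\equiv0$, so $u_k(x)-u(x)=u_k(x)=\mu^{\mu}(k+\mu-|x|)^{-\mu}$; for $|x|\le R$ and $k$ large this is at most $\mu^{\mu}(k/2)^{-\mu}$, i.e.\ $\mathcal{O}(k^{-\mu})=\mathcal{O}\big(k^{-\gamma/(1-\gamma)}\big)$. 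The only point requiring care — and the main obstacle — is the analysis at the origin: one must verify that $|Du_k|(0)<1$ so as to remain in the Hölder branch, use convexity to reduce to a linear test function for the supersolution inequality, and note that the subsolution inequality is vacuous; everything else is routine differentiation.
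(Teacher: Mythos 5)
Your proof is correct and follows essentially the same approach as the paper's: verify the explicit formula classically away from the origin, check the super/subsolution conditions by hand at $x=0$ and on $\partial B(0,k)$, and invoke uniqueness. The paper first derives the formula from the 1D ODE $u^{1/\gamma}=u'$ and then remarks the extension to $\mathbb{R}^n$, whereas you verify directly in $n$ dimensions and spell out that $D^+u_k(0)=\emptyset$; these are minor presentational differences only.
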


\begin{proof} 
Let us first consider the one dimensional case. 
The higher dimensional setting can be done in a same manner. 
Let $\mu = \gamma^{-1}
$, we look for a nonnegative solution to $u(x)^\mu = u'(x)$ where $x\in (0,k)$. We have
\begin{equation*}
u(x)^{1-\mu} = (1- \mu)x-C_k \qquad\Longrightarrow\qquad u(x) = (\mu-1)^\frac{1}{1-\mu}(C_k-x)^\frac{1}{1-\mu}.
\end{equation*}
We want to choose $C_k$ such that $u'(x)\in [0,1]$ for $x\in (0,k)$. Equivalently,
\begin{equation*}
u'(x) = u(x)^{\mu}= (\mu-1)^\frac{\mu}{1-\mu}(C_k-x)^\frac{\mu}{1-\mu} \in [0,1]
\end{equation*}
for $x\in (0,k)$. Since it is an increasing function, $C_k = k + \frac{1}{\mu-1}$. Using symmetry, we guess that $u_k$ is written as
\begin{equation*}
u_k(x) = (\mu-1)^\frac{1}{1-\mu} \left(k+\tfrac{1}{\mu-1}-|x|\right)^\frac{1}{1-\mu}.
\end{equation*}
It is straightforward to see that $u_k$ satisfies the equation in the classical sense $u_k(x) -|u_k'(x)|^\gamma = 0$ in $(-k,k)\backslash \{0\}$. Since $|u_k'(x)|\leq 1$ on $(-k,k)\backslash \{0\}$, we have $u_k(x) + H(u_k'(x)) = 0$ in the classical sense in $(-k,k)\backslash \{0\}$. At $|x| = k$, we have $u_k(x)=1 \geq -\min H$. Therefore, the supersolution test at these points are satisfied. Finally, at $x=0$ we only need to verify the supersolution test, which is simple since if $p\in D^-u_k(0)$ then
\begin{equation*}
|p|\leq (\mu-1)^\frac{\mu}{1-\mu} \left(k+\tfrac{\mu}{\mu-1}\right)^\frac{\mu}{1-\mu} \qquad\Longrightarrow\qquad u_k(0) + H(p) \geq u_k(0) - |p|^\mu \geq 0.
\end{equation*}
Thus, $u_k$ defined above is the unique viscosity solution to the constraint problem \eqref{u_k}. Using a similar argument as in the proof of Proposition \ref{prop:linear}, this formula of $u_k$ can be extended naturally to the $n$-dimensional case, as given in \eqref{eqn:ex3} and the conclusion follows.
\end{proof}

\begin{rem} From Proposition \ref{example3} we see that the optimal rate of convergence can be as slow as we wish as the H\"older exponent $\gamma\rightarrow 0^+$.
This shows that the required condition  \eqref{H3c} is really essential in this section.
\end{rem}

When Hamiltonians are of the form $H(x,p) = K(p)+V(x)$, the situation becomes much more complicated. See Example \ref{ex22} for a situation where we get the optimal exponential rate of convergence with nonconvex $K$.

\section{An optimal rate for convex Hamiltonians}\label{5}
In this section, the assumptions $\eqref{H0}$, $\eqref{H1}$, $\eqref{H3b}$, $(\mathrm{H5})$ are always in force. The state-constraint problem was studied in the context of optimal control for convex Hamiltonians (see \cite{Soner1986, Capuzzo-Dolcetta1990,Bardi1997} for instance). When $H$ is convex, we are able to obtain a representation formula for the viscosity solution based on the optimal control theory. Let us assume the following superlinear property (see Remark \ref{rem:removeH6} where we can remove this assumption), which is
\begin{itemize}
\item[(H6)] $p\mapsto H(x,p)$ is superlinear uniformly for $x\in\overline{\Omega}$, that is,
\begin{equation}\label{H6}
\lim_{|p|\rightarrow \infty} \left(\inf_{x\in \Omega} \frac{H(x,p)}{|p|}\right) = +\infty. \tag{H6}
\end{equation}
\end{itemize}
If $\mathrm{(H5)}$ and \eqref{H6} hold, then the Legendre transform $L:\overline{\Omega}\times \mathbb{R}^n$ of $H$ is defined as
\begin{equation*}
L(x,v) := \sup_{p\in \mathbb{R}^n} \big\lbrace p\cdot v - H(x,p)\big\rbrace, \qquad(x,v)\in \overline{\Omega}\times \mathbb{R}^n.
\end{equation*}

\begin{lem} Assume $\mathrm{(H5)}$ and \eqref{H6}. Then, $L:\overline{\Omega}\times\mathbb{R}^n \rightarrow\mathbb{R}$ is continuous satisfying:
\begin{itemize}
\item[$\mathrm{(L1)}$] If $\mathrm{(H1})$ holds, then $L(x,0) \leq C_1$ for all $x\in \overline{\Omega}$;
\item[$\mathrm{(L2)}$] If $\mathrm{(H2}$ holds, then $L(x,v) \geq -C_2$ for all $(x,v)\in \overline{\Omega}\times\mathbb{R}^n$; 
\item[(L3)] If $\mathrm{(H3b)}$ holds, then for each $R>0$ there exists a modulus $\tilde{\omega}_R(\cdot)$ such that 
\begin{equation*}
|L(x,v) - L(y,v)| \leq \tilde{\omega}_R(|x-y|) \qquad\text{for all}\; x,y\in \overline{\Omega}, |v|\leq R.
\end{equation*}
\item[(L5)] $v\mapsto L(x,v)$ is convex for each $x\in \overline{\Omega}$;
\item[(L6)] $v\mapsto L(x,v)$ is superlinear uniformly in $x\in\overline{\Omega}$, i.e.,
\begin{equation}\label{L6}
\lim_{|p|\rightarrow \infty} \left(\inf_{x\in \overline{\Omega}} \frac{L(x,v)}{|v|}\right) = +\infty. \tag{L6}
\end{equation}
\end{itemize}
\end{lem}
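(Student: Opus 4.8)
The plan is to verify each of the six listed properties of the Legendre transform $L(x,v) = \sup_{p\in\mathbb{R}^n}\{p\cdot v - H(x,p)\}$ in turn, using the standard facts about Legendre duality under convexity and superlinearity. The overall structure is: first establish that under $\mathrm{(H5)}$ and \eqref{H6} the supremum defining $L$ is attained (and locally bounded), so $L$ is finite and the duality is nondegenerate; then read off (L1), (L2), (L5), (L6) almost immediately, and spend the real effort on the modulus estimate (L3) and on continuity.

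First I would note that by \eqref{H6}, for fixed $x$ and fixed $v$ the function $p\mapsto p\cdot v - H(x,p)$ tends to $-\infty$ as $|p|\to\infty$ (the $-H$ term dominates), and it is upper semicontinuous (indeed continuous), hence the sup is attained at some $p^\ast = p^\ast(x,v)$; moreover, using the uniform superlinearity one gets that for $|v|\leq R$ the maximizers lie in a ball $\overline{B(0,M_R)}$ with $M_R$ depending only on $R$ and the modulus in \eqref{H6}. This localization is the workhorse for everything else. For (L1): $L(x,0) = \sup_p\{-H(x,p)\} = -\inf_p H(x,p) \leq C_1$ by \eqref{H0}, since \eqref{H0} says $-H(x,p)\leq C_1$ pointwise. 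For (L2): $L(x,v)\geq 0\cdot v - H(x,0) = -H(x,0)\geq -|H(x,0)|\geq -C_2$ by \eqref{H1}. For (L5): $L(x,\cdot)$ is a sup of affine functions of $v$, hence convex. For (L6): superlinearity of $L$ in $v$ follows from boundedness of $H$ near $p=0$ — concretely, for any $\lambda>0$, $L(x,v)\geq \lambda|v| \cdot \frac{v}{|v|}\cdot\frac{v}{|v|}\cdots$; more cleanly, picking $p = \lambda\frac{v}{|v|}$ gives $L(x,v)\geq \lambda|v| - \sup_{|q|\leq\lambda}H(x,q)$, and by \eqref{H1} together with \eqref{H3b} the subtracted term is bounded by a constant depending only on $\lambda$, so $L(x,v)/|v|\geq \lambda - o(1)$ uniformly in $x$; letting $\lambda\to\infty$ gives \eqref{L6}.

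The main obstacle is (L3), the $x$-modulus of continuity of $L$ on $|v|\leq R$. Here I would fix $v$ with $|v|\leq R$ and $x,y\in\overline{\Omega}$, let $p^\ast$ be a maximizer for $L(x,v)$ — which by the localization step satisfies $|p^\ast|\leq M_R$ — and estimate
\begin{equation*}
L(x,v) - L(y,v) \leq \big(p^\ast\cdot v - H(x,p^\ast)\big) - \big(p^\ast\cdot v - H(y,p^\ast)\big) = H(y,p^\ast) - H(x,p^\ast) \leq \omega_{M_R}(|x-y|),
\end{equation*}
using the first line of \eqref{H3b} with the radius $M_R$. By symmetry the reverse inequality holds with the roles of $x,y$ swapped, giving $|L(x,v)-L(y,v)|\leq \omega_{M_R}(|x-y|) =: \tilde\omega_R(|x-y|)$. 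The key point to get right is that the maximizers can be taken in a ball whose radius depends only on $R$ (not on $x$), which is exactly what uniform superlinearity \eqref{H6} buys. Finally, joint continuity of $L$ on $\overline{\Omega}\times\mathbb{R}^n$ follows by combining (L3) (continuity in $x$, locally uniform in $v$) with the continuity of $v\mapsto L(x,v)$, which holds because a finite convex function on $\mathbb{R}^n$ is automatically continuous; an $\varepsilon/2$ argument then gives joint continuity.
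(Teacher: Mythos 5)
The paper actually \emph{omits} the proof of this lemma, saying only ``We omit the proof of this lemma referring the interested readers to [Cannarsa--Sinestrari].'' So your write-up supplies an argument where the paper gives none; it follows the standard Legendre-duality route, and it is essentially correct. Properties (L1), (L2), (L5) are exactly the one-line observations you give. For (L6), the fragment ``$L(x,v)\geq \lambda|v| \cdot \frac{v}{|v|}\cdots$'' is a stray non-sequitur, but you discard it and the clean version that follows (choose $p=\lambda v/|v|$ and bound $\sup_{|q|\le\lambda} H(x,q)$ via (H2) and (H3b)) is the right argument. The derivation of (L3) via a uniformly bounded maximizer $p^\ast$ and one application of the first line of (H3b) is also the standard and correct approach, as is the $\varepsilon/2$ gluing of (L3) with convexity-implies-continuity in $v$ to get joint continuity.

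One point worth tightening: the localization of maximizers, ``for $|v|\le R$ the maximizers lie in $\overline{B(0,M_R)}$ with $M_R$ depending only on $R$ and the modulus in (H6),'' is not a consequence of (H5)--(H6) alone. The argument is: if $|p^\ast|$ is large then $L(x,v)=p^\ast\cdot v-H(x,p^\ast)\le |p^\ast|\,R - \Lambda|p^\ast|$ by (H6) with $\Lambda>R$, which is very negative; to conclude $|p^\ast|$ is bounded one must pair this with a \emph{lower} bound on $L(x,v)$, namely $L(x,v)\ge -H(x,0)\ge -C_2$ from (H2), i.e.\ (L2). Without (H2), over an unbounded $\overline\Omega$ the quantity $H(x,0)$ (and hence $|p^\ast|$) could blow up in $x$, and the constant $M_R$ would not be uniform. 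In the paper's setting (H1), (H2) are always enforced alongside (H3b), so this is a bookkeeping issue rather than a substantive gap, but the dependence of (L3) on (H2) should be stated explicitly. (Incidentally, the displayed formula (L6) in the lemma has a typo, $\lim_{|p|\to\infty}$ should read $\lim_{|v|\to\infty}$; your argument proves the intended statement.)
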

We omit the proof of this lemma and refer the interested readers to \cite{PiermarcoCannarsa2004}. 

For each $x\in \overline{\Omega}$, we define the admissible set of paths as
\begin{equation*}
\mathcal{A}_x = \Big\lbrace \eta\in \mathrm{AC}\big([0,\infty);\mathbb{R}^n\big): \eta(0)=x\;\text{and}\; \eta(s)\in \overline{\Omega}\;\text{for all}\;s\geq 0 \Big\rbrace
\end{equation*}
where $\mathrm{AC}\big([0,\infty);\mathbb{R}^n\big)$ denotes the set of absolutely continuous curves from $[0,\infty)$ to $\mathbb{R}^n$. Note that $\mathcal{A}_x\neq \emptyset$ since $\eta(s)\equiv x$ for all $s\in [0,\infty)$ is an admissible path. 
From this, define the value function as
\begin{equation}\label{def.u}
u(x):= \inf_{\eta\in \mathcal{A}_x} J\left[x,\eta\right] 
\end{equation}
where the cost functional is defined as
\begin{equation*}
J\left[x,\eta\right] = \int_0^\infty e^{-s}L\big(\eta(s),-\dot{\eta}(s)\big)\;ds
\end{equation*}
for $\left(x,\eta\right)\in \overline{\Omega} \times \mathcal{A}_x$. Now we have the following classical dynamic programming principle.

\begin{thm}[Dynamic Programming Principle]\label{DPP} For any $t>0$, we have 
\begin{equation*}
u(x) = \inf_{\eta\in \mathcal{A}_x} \left\lbrace \int_0^t e^{-s}L\big(\eta(s),-\dot{\eta}(s)\big)\;ds + e^{-t}u\big(\eta(t)\big) \right\rbrace.
\end{equation*}
\end{thm}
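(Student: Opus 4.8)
The plan is to establish the two inequalities $\le$ and $\ge$ separately, in the standard way for the discounted-cost dynamic programming principle, with attention to the state-constraint condition $\eta(s)\in\overline{\Omega}$ and to the finiteness of $u$. First I would record that $u$ is well-defined and bounded: the constant path $\eta\equiv x$ lies in $\mathcal{A}_x$, so by $\mathrm{(L1)}$ we get $u(x)\le J[x,\eta]=\int_0^\infty e^{-s}L(x,0)\,ds\le C_1$, while $\mathrm{(L2)}$ gives $J[x,\xi]\ge -C_2$ for every $\xi\in\mathcal{A}_x$; hence $-C_2\le u\le C_1$ on $\overline{\Omega}$, and every cost integral below is bounded from below by the integrable function $-C_2 e^{-s}$, so all splittings at time $t$ and changes of variables are legitimate. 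Write $w(x)$ for the right-hand side of the claimed identity.

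For the bound $u(x)\le w(x)$, I would fix $\eta\in\mathcal{A}_x$, set $z=\eta(t)$, take an arbitrary $\xi\in\mathcal{A}_z$, and form the concatenation $\tilde\eta:=\eta$ on $[0,t]$ and $\tilde\eta(s):=\xi(s-t)$ for $s\ge t$, which lies in $\mathcal{A}_x$ because the two pieces agree at $s=t$. Splitting $J[x,\tilde\eta]$ at time $t$ and substituting $\tau=s-t$ in the tail gives
\begin{equation*}
u(x)\le J[x,\tilde\eta]=\int_0^t e^{-s}L\big(\eta(s),-\dot\eta(s)\big)\,ds+e^{-t}J[z,\xi].
\end{equation*}
Taking the infimum over $\xi\in\mathcal{A}_z$ and then over $\eta\in\mathcal{A}_x$ yields $u(x)\le w(x)$.

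For the reverse bound $u(x)\ge w(x)$, I would fix $\eta\in\mathcal{A}_x$ and observe that the shifted curve $\xi(\tau):=\eta(\tau+t)$ lies in $\mathcal{A}_{\eta(t)}$, so the same substitution gives $\int_t^\infty e^{-s}L\big(\eta(s),-\dot\eta(s)\big)\,ds=e^{-t}J[\eta(t),\xi]\ge e^{-t}u(\eta(t))$; hence
\begin{equation*}
J[x,\eta]=\int_0^t e^{-s}L\big(\eta(s),-\dot\eta(s)\big)\,ds+\int_t^\infty e^{-s}L\big(\eta(s),-\dot\eta(s)\big)\,ds\ge\int_0^t e^{-s}L\big(\eta(s),-\dot\eta(s)\big)\,ds+e^{-t}u(\eta(t))\ge w(x),
\end{equation*}
and taking the infimum over $\eta\in\mathcal{A}_x$ gives $u(x)\ge w(x)$, which completes the argument.

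The step I expect to need the most care is the admissibility of the concatenated curve $\tilde\eta$: one has to verify that gluing two absolutely continuous curves at a common endpoint value produces an absolutely continuous curve (true because its a.e.\ derivative is the concatenation of $\dot\eta|_{[0,t]}$ with the shift of $\dot\xi$, a locally integrable function) and that it still takes values in $\overline{\Omega}$ (immediate from the two pieces). Everything else is bookkeeping; in particular, when some cost integral equals $+\infty$ the corresponding inequality holds trivially, so no separate treatment of that case is needed.
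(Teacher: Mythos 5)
Your proof is correct, and it is the standard two-inequality argument (concatenation for $\le$, tail-shift for $\ge$) that appears in the references the paper cites for this result; the paper itself explicitly omits the proof of Theorem \ref{DPP}, deferring to \cite{Bardi1997,Capuzzo-Dolcetta1990,Soner1986}. You correctly handle the one-sided integrability coming from $L\ge -C_2$, the admissibility of the glued curve (absolute continuity and the state-constraint $\eta(s)\in\overline{\Omega}$), and the change of variable $\tau=s-t$, so there is nothing to add.
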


Using the Dynamic Programming Principle, one can prove that $u\in \mathrm{BUC}(\overline {\Omega})$ and indeed a viscosity solution to \eqref{state-def} as stated in the following theorems. 

\begin{thm}\label{thm:contu} Assume \eqref{H0}, \eqref{H1}, \eqref{H3c}, $(\mathrm{H5})$ and \eqref{H6}, then the function $u(x)$ defined by \eqref{def.u} is bounded and is uniformly continuous up to the boundary, which is $u\in \mathrm{BUC}(\overline{\Omega})$.
\end{thm}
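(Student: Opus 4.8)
The plan is to show that $u$ defined by the value function \eqref{def.u} is bounded and uniformly continuous on $\overline{\Omega}$ by exploiting the Dynamic Programming Principle from Theorem \ref{DPP} together with the structural properties (L1), (L2), (L3), (L6) of the Lagrangian. First I would establish boundedness: using the constant path $\eta(s)\equiv x$ and property (L1), we get $u(x)\leq \int_0^\infty e^{-s}L(x,0)\,ds\leq C_1$; for the lower bound, property (L2) gives $J[x,\eta]\geq -C_2\int_0^\infty e^{-s}\,ds = -C_2$ for every admissible $\eta$, so $-C_2\leq u(x)\leq C_1$, hence $u$ is bounded.

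The core of the argument is uniform continuity. The main obstacle is that, unlike in an unconstrained problem, one cannot freely translate an optimal (or near-optimal) path for $x$ to produce a competitor for a nearby point $y$, because the translated path might leave $\overline{\Omega}$; this is precisely where the boundary geometry matters. So the key step is to exploit a controllability/interior-cone type condition — the assumption \eqref{A1} or the relevant hypothesis for the prototype domains (P1), (P2) — to show that from any $y\in\overline{\Omega}$ near $x$ one can run a short, cheap path that reaches $x$ (or reaches the near-optimal trajectory of $x$) while staying inside $\overline{\Omega}$, with cost controlled by a modulus of $|x-y|$. Concretely, given $\epsilon>0$, pick a near-optimal $\eta_x\in\mathcal{A}_x$ with $J[x,\eta_x]\leq u(x)+\epsilon$; using the superlinearity bound (L6) together with the a priori Lipschitz bound on $u$ one first argues $\eta_x$ has controlled speed, then one splices in a short connecting arc from $y$ to $x$ (of length $O(|x-y|)$, staying in $\overline{\Omega}$), estimates its cost via (L1)/(L3) and the continuity of $L$, applies the DPP decomposition at a small time $t = t(|x-y|)$, and concludes $u(y)\leq u(x) + \widetilde\omega(|x-y|)$ for a modulus $\widetilde\omega$ independent of $x,y$; swapping the roles of $x$ and $y$ gives the reverse inequality, hence uniform continuity.

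I would organize it as: (1) boundedness via (L1)–(L2) and the constant path; (2) a lemma that near-optimal paths have uniformly bounded average speed on short time intervals, using (L6), superlinearity, and the boundedness of $u$; (3) the splicing/controllability step producing a short admissible connector between nearby points with cost $O(\widetilde\omega(|x-y|))$, which is where \eqref{A1} (or the explicit ball geometry of (P1), (P2)) enters; (4) combine via the DPP in Theorem \ref{DPP} to get the one-sided modulus estimate, then symmetrize. The hard part will be step (3): making the connector genuinely admissible (staying in $\overline{\Omega}$) near the boundary while keeping its Lagrangian cost small requires care — near an interior point a straight segment works, but near $\partial\Omega$ one must push slightly inward using the cone condition, and for prototype (P2), where $\Omega=B(0,1)$, one has to handle points approaching the unit sphere, which is exactly the source of the $\mathcal{O}(1/k)$ rate appearing later; the uniform continuity (as opposed to mere continuity) comes down to the modulus in the connector cost being independent of the base point, which the uniform bounds in (L1), (L3) and the uniform cone condition \eqref{A1} provide.
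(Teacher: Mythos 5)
The paper actually omits the proof of Theorem \ref{thm:contu} entirely, deferring to the classical references \cite{Bardi1997, Capuzzo-Dolcetta1990, Soner1986}, so there is no in-paper argument to compare against line by line. Your sketch, however, is essentially the standard Soner-type argument from those sources: boundedness via the constant path and (L1)--(L2), and uniform continuity by taking a near-optimal trajectory $\eta_x$, building an admissible competitor from a nearby $y$ that shadows $\eta_x$ while staying in $\overline{\Omega}$ (the cone condition \eqref{A1} or the convexity of the ball prototypes), and controlling the extra Lagrangian cost through (L3) and the speed bound on near-optimal paths coming from superlinearity and the $L^\infty$ bound on $u$. That is the right route, and you correctly flag that some geometric regularity of $\partial\Omega$ is the essential hidden ingredient, even though it is not listed among the stated hypotheses (for the paper's prototype domains it holds trivially).

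Two small points of caution. First, you invoke ``the a priori Lipschitz bound on $u$'' when bounding the speed of the near-optimal trajectory; since Lipschitz (indeed, mere uniform) continuity of $u$ is precisely what is being proved here, that phrasing is circular. What the argument actually needs at that stage is only the $L^\infty$ bound $-C_2\le u\le C_1$ just established, combined with (L2) and (L6)/(L7), to get $\int_0^\infty e^{-s}|\dot\eta(s)|^2\,ds\le C$ for an $\epsilon$-optimal $\eta$; the pointwise speed bound of Lemma \ref{bound_on_doteta} uses the gradient estimate on $u$ and should not be assumed at this point. Second, the ``splice a short connector from $y$ to $x$'' variant works cleanly for convex $\overline{\Omega}$ (as in (P1), (P2)), but for a general domain satisfying only \eqref{A1} the straight segment from $y$ to $x$ may exit $\overline{\Omega}$; the standard fix is not to connect $y$ to $x$ but to shadow the whole trajectory, i.e.\ build $\eta_y(s)=\eta_x(s)+\theta(s)$ with a small inward correction $\theta$ (using the vector field $\eta$ of \eqref{A1}) so that $\eta_y\in\mathcal{A}_y$, $|\eta_y-\eta_x|=O(|x-y|)$ and $|\dot\eta_y-\dot\eta_x|=O(|x-y|)$ uniformly on $[0,T]$, and then use (L3) plus the speed bound to compare costs. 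You do gesture at this (``push slightly inward using the cone condition''), so the idea is in place; just be aware that the connector-arc picture and the trajectory-shadowing picture are not interchangeable near a nonconvex boundary.
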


\begin{thm}\label{thm:HJ} The value function $u\in \mathrm{BUC}(\overline{\Omega})$ defined in \eqref{def.u} is a viscosity solution to the state-constraint Hamilton-Jacobi equation $u(x) + H(x,Du(x)) = 0$ in $\Omega$, i.e.,
\begin{equation*}
\begin{cases}
u(x) + H(x,Du(x)) \leq 0 \qquad\text{in}\;\Omega,\\
u(x) + H(x,Du(x)) \geq 0 \qquad\text{on}\;\overline{\Omega}.
\end{cases}
\end{equation*}
\end{thm}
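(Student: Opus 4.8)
The plan is to extract both one-sided viscosity inequalities from the Dynamic Programming Principle (Theorem \ref{DPP}), using the regularity $u\in\mathrm{BUC}(\overline\Omega)$ already established in Theorem \ref{thm:contu} and the Legendre duality between $H$ and $L$ furnished by $(\mathrm{H5})$, \eqref{H6}, namely $H(x,p)=\sup_{v\in\R^n}\{p\cdot v-L(x,v)\}$ together with the Fenchel--Young inequality $L(x,v)+H(x,p)\geq p\cdot v$.

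\emph{Subsolution in $\Omega$.} Fix $x\in\Omega$ and $\varphi\in\mathrm{C}^1(\Omega)$ with $u-\varphi$ attaining a local maximum over $\Omega$ at $x$; we may assume $\varphi(x)=u(x)$. For an arbitrary $v\in\R^n$, since $x$ is interior the straight line $\eta(s)=x-sv$ stays in $\overline\Omega$ for $s\in[0,t_0]$ with $t_0=t_0(v)>0$, so (after freezing it constant for $s\geq t_0$) it lies in $\mathcal{A}_x$. Plugging $\eta$ into the identity of Theorem \ref{DPP} for $0<t<t_0$, using $(u-\varphi)(\eta(t))\leq(u-\varphi)(x)=0$, rearranging and dividing by $t$ gives
\[
\frac{1-e^{-t}}{t}\,u(x)\;\leq\;\frac1t\int_0^t e^{-s}L\big(\eta(s),v\big)\,ds\;+\;e^{-t}\,\frac{\varphi(\eta(t))-\varphi(x)}{t}.
\]
Letting $t\to0^+$ yields $u(x)\leq L(x,v)-D\varphi(x)\cdot v$, i.e. $u(x)+D\varphi(x)\cdot v-L(x,v)\leq 0$ for every $v$; taking the supremum over $v\in\R^n$ and using the Legendre duality gives $u(x)+H(x,D\varphi(x))\leq0$.

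\emph{Supersolution on $\overline\Omega$.} Fix $x\in\overline\Omega$ and $\varphi\in\mathrm{C}^1(\overline\Omega)$ with $u-\varphi$ attaining a local minimum over $\overline\Omega$ at $x$, again normalized so $\varphi(x)=u(x)$. Suppose for contradiction that $u(x)+H(x,D\varphi(x))=-3\delta<0$ for some $\delta>0$. For each small $t>0$ pick, via Theorem \ref{DPP}, a path $\eta=\eta_t\in\mathcal{A}_x$ that is $\delta t$-optimal in the DPP. A preliminary step, and the technical heart of the argument, is to show $\sup_{0\leq s\leq t}|\eta_t(s)-x|\to0$ as $t\to0^+$: comparing with the admissible constant path $\eta\equiv x$ (whose cost is finite by $(\mathrm{L1})$) bounds $\int_0^t e^{-s}L(\eta_t(s),-\dot\eta_t(s))\,ds$, and superlinearity $(\mathrm{L6})$ of $L$ (equivalently \eqref{H6}) then forces $\int_0^t|\dot\eta_t(s)|\,ds\to0$, whence $|\eta_t(s)-x|\leq\int_0^t|\dot\eta_t|\,dr\to0$ uniformly in $s\leq t$. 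Granting this, use $(u-\varphi)(\eta(t))\geq(u-\varphi)(x)=0$, rewrite $e^{-t}\varphi(\eta(t))-\varphi(x)=\int_0^t e^{-s}\big(-\varphi(\eta(s))+D\varphi(\eta(s))\cdot\dot\eta(s)\big)\,ds$, and apply Fenchel--Young with $p=D\varphi(\eta(s))$, $v=-\dot\eta(s)$, namely $L(\eta(s),-\dot\eta(s))+D\varphi(\eta(s))\cdot\dot\eta(s)\geq-H(\eta(s),D\varphi(\eta(s)))$. The DPP inequality collapses to
\[
\delta t\;\geq\;\int_0^t e^{-s}\Big(-\varphi(\eta(s))-H\big(\eta(s),D\varphi(\eta(s))\big)\Big)\,ds .
\]
By the uniform closeness of $\eta(s)$ to $x$ and continuity of $\varphi$, $D\varphi$ and $H$, the integrand exceeds $2\delta$ for all small $t$ (since it equals $2\delta$ at $s$ where $\eta(s)=x$), so $\delta t\geq 2\delta e^{-t}t>\delta t$ for $t$ small, a contradiction. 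Hence $u(x)+H(x,D\varphi(x))\geq0$ on $\overline\Omega$.

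\emph{Main obstacle.} The delicate point is the equicontinuity estimate on near-optimal trajectories in the supersolution step, $\sup_{s\leq t}|\eta_t(s)-x|\to0$, which is precisely where the superlinearity of the Lagrangian is indispensable; the subsolution half is routine once one observes that interior points admit admissible straight-line variations, which is exactly why the subsolution inequality is only claimed in $\Omega$ and not up to $\partial\Omega$.
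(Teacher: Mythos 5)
Your proof is sound and, since the paper omits the proof of this theorem (it refers the reader to Bardi--Capuzzo-Dolcetta, Capuzzo-Dolcetta--Lions and Soner), your DPP-plus-Fenchel--Young argument is essentially the standard one from those references rather than something the paper supplies; so there is nothing in the paper to compare it against line by line. Two small points are worth flagging. First, the equicontinuity step deserves one more sentence of detail: from $\delta t$-optimality and the a priori bound $|u|\le C_H$ one gets $\int_0^t e^{-s}L(\eta_t,-\dot\eta_t)\,ds\le 2C_H+\delta t$; combined with the superlinear lower bound $L(x,v)\ge R|v|-C_R$ this yields $\int_0^t|\dot\eta_t|\le e^t(2C_H+\delta t+C_Rt)/R$, so $\limsup_{t\to0^+}\int_0^t|\dot\eta_t|\le 2C_H/R$ for every $R$, hence the limit is $0$. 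This is exactly what you invoke, but the inequality you get from ``comparing with the constant path'' alone only controls $u$, not the action integral of $\eta_t$; it is the near-optimality of $\eta_t$ that controls the action. Second, a minor arithmetic slip: the integrand $-\varphi(\eta(s))-H(\eta(s),D\varphi(\eta(s)))$ evaluated at a point where $\eta(s)=x$ equals $3\delta$, not $2\delta$; continuity then lets you keep it above $2\delta$ on the short time horizon, which is all you use, so the contradiction $\delta t\ge 2\delta(1-e^{-t})\ge 2\delta t e^{-t}>\delta t$ for $0<t<\ln 2$ stands. The subsolution half is fine as written, including the observation that interior points permit admissible straight-line variations and that this is the reason the subsolution inequality is only asserted in $\Omega$.
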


We omit the proofs of Theorems \ref{DPP}, \ref{thm:contu} and \ref{thm:HJ}. 
We refer to  \cite{Soner1986, Capuzzo-Dolcetta1990,Bardi1997} for those who are interested. 
\medskip

On the other hand, when $\Omega = \mathbb{R}^n$, it is known that the function $u(x)$ defined in \eqref{def.u} satisfies the Hamilton-Jacobi equation \eqref{u_eqn} in viscosity sense (see \cite{Bardi1997, Le2017} for instance). 

\begin{thm} For each $x\in  \mathbb{R}^n$, we define
\begin{equation}\label{rep_for}
u(x) = \inf_{\eta\in \mathcal{A}_{x}} \int_0^\infty e^{-s}L\left(\eta(s),-\dot{\eta}(s)\right)\;ds
\end{equation}
subject to $\mathcal{A}_{x} = \left\lbrace \eta\in\mathrm{AC}\big([0,\infty);\mathbb{R}^n\big): \eta(0) = x  \right\rbrace$. Then, $u\in \mathrm{BUC}(\mathbb{R}^n)$ is a viscosity solution to \eqref{u_eqn} and we have the following priori estimate:
\begin{equation}\label{est.u}
\Vert u\Vert_{L^\infty(\mathbb{R}^n)} + \vert Du \Vert_{L^\infty(\mathbb{R}^n)} \leq C_H.
\end{equation}
\end{thm}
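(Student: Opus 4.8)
The statement is the standard representation result for an infinite-horizon discounted control problem on all of $\R^n$, so the plan is to follow the classical scheme (as in \cite{Bardi1997,Le2017}), adapting the arguments already recorded for the state-constrained case in Theorems \ref{DPP}--\ref{thm:contu}. First I would record the Dynamic Programming Principle for \eqref{rep_for}: for every $t>0$,
\[
u(x)=\inf_{\eta\in\cA_{x}}\left\lbrace\int_0^t e^{-s}L\big(\eta(s),-\dot\eta(s)\big)\,ds+e^{-t}u\big(\eta(t)\big)\right\rbrace,
\]
whose proof is verbatim that of Theorem \ref{DPP}. The constant path $\eta\equiv x$ then gives $u(x)\le L(x,0)$, so $\mathrm{(L1)}$ yields $u\le C_1$, while $\mathrm{(L2)}$ gives $u\ge -C_2$, whence $\Vert u\Vert_{L^\infty(\R^n)}\le\max\{C_1,C_2\}$. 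Uniform continuity of $u$ is obtained as in Theorem \ref{thm:contu}, by transporting a near-optimal path for a starting point $y$ to one starting at a nearby $x$ over a short time interval and invoking $\mathrm{(L3)}$ together with the superlinearity $\mathrm{(L6)}$, which bounds the speeds of near-optimal paths by a constant depending only on $H$.

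Next I would check the two viscosity inequalities. For the subsolution inequality, let $\varphi\in\mathrm{C}^1(\R^n)$ touch $u$ from above at $x_0$; using the constant control $\eta(s)=x_0-sv$ in the DPP gives
\[
\varphi(x_0)\le\int_0^t e^{-s}L(x_0-sv,v)\,ds+e^{-t}\varphi(x_0-tv),
\]
and dividing by $t$ and letting $t\to0^+$ yields $\varphi(x_0)+D\varphi(x_0)\cdot v-L(x_0,v)\le0$ for every $v\in\R^n$; taking the supremum over $v$ gives $\varphi(x_0)+H(x_0,D\varphi(x_0))\le0$ by the Fenchel duality $H=L^{*}$. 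For the supersolution inequality there are two routes. The direct one: let $\varphi$ touch $u$ from below at $x_0$ and pick a $t^2$-optimal path $\eta_t$ in the DPP; $\mathrm{(L6)}$ and the $L^\infty$ bound on $u$ force $\sup_{s\in[0,t]}|\eta_t(s)-x_0|\to0$ as $t\to0^+$, so $u(\eta_t(t))\ge\varphi(\eta_t(t))$ for small $t$, and combining this with the Fenchel inequality $L(\eta_t(s),-\dot\eta_t(s))\ge-\dot\eta_t(s)\cdot D\varphi(\eta_t(s))-H(\eta_t(s),D\varphi(\eta_t(s)))$ and the identity $e^{-t}\varphi(\eta_t(t))-\varphi(x_0)=\int_0^t\frac{d}{ds}\big(e^{-s}\varphi(\eta_t(s))\big)\,ds$, then dividing by $t$ and sending $t\to0^+$, gives $\varphi(x_0)+H(x_0,D\varphi(x_0))\ge0$. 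Alternatively, the verification argument shows that any bounded viscosity subsolution $w$ of \eqref{u_eqn} satisfies $w(x)\le J[x,\eta]$ for every $\eta\in\cA_x$ (integrate $\frac{d}{ds}(e^{-s}w(\eta(s)))$ and let $t\to\infty$), hence $w\le u$; thus $u$ is the maximal bounded subsolution, and the Perron-type bumping argument of Theorem \ref{Perron} (equivalently Corollary \ref{cor:Inv_max}) forces $u$ to be a supersolution.

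Finally, the a priori estimate \eqref{est.u}: the $L^\infty$ bound is the one found above, and since $u$ is a bounded viscosity subsolution, every $p\in D^{+}u(x)$ satisfies $H(x,p)\le-u(x)\le\max\{C_1,C_2\}$, so the coercivity \eqref{H4} gives $|p|\le C_3$; a continuous function whose superdifferentials are bounded by $C_3$ everywhere is $C_3$-Lipschitz, so $u\in\W(\R^n)$ with $\Vert Du\Vert_{L^\infty}\le C_3$, and \eqref{est.u} holds with $C_H=\max\{C_1,C_2,C_3\}$. I expect the main obstacle to be the supersolution inequality together with the continuity of $u$: in the direct route one must quantify that near-minimizing paths cannot travel far over a short time, which is exactly where uniform superlinearity $\mathrm{(L6)}$ enters; everything else is routine and parallels the state-constrained case already treated.
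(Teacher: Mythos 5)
The paper does not prove this theorem; it simply cites \cite{Le2017} (``it is known that \dots''), so there is no in-paper argument to compare against. Your sketch is the standard one that those references use -- the Dynamic Programming Principle, constant-velocity test paths plus Fenchel duality $H=L^{*}$ for the subsolution inequality, near-optimal paths plus $(\mathrm{L6})$/$(\mathrm{L7})$ to localize for the supersolution inequality, and coercivity for the Lipschitz bound -- and it is correct. One small point worth tightening when you write it up: ``a continuous function whose superdifferentials are bounded by $C_3$ is $C_3$-Lipschitz'' should be replaced by the usual comparison with the cone $y\mapsto u(x_0)+(C_3+\varepsilon)|y-x_0|$, since $D^{+}u$ can be empty at some points; the conclusion is the same but that is the argument actually doing the work.
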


\begin{rem}\label{rem:removeH6} 
We may assume that $H$ is just coercive rather than superlinear. 
When a Hamiltonian is coercive, we still have that \eqref{est.u} holds for some $C=C_H>0$. 
Therefore, for $|p| \geq C$, we can modify $H$ so that \eqref{H6} holds. Furthermore, we can impose a quadratic growth rate on $H$ as following.
\begin{itemize}
\item[(H7)] There exist some positive constants $A,B$ such that
\begin{equation}\label{H7}
A^{-1}|v|^2 - B \leq H(x,p) \leq A|v|^2 + B \qquad\text{for}\; (x,p)\in \mathbb{R}^n\times\mathbb{R}^n. \tag{H7}
\end{equation}
\end{itemize}
It is easy to see from $\mathrm{(H7)}$ that we have $(4A)^{-1}|v|^2 - B \leq L(x,v) \leq 4A|v|^2+B$ for all $(x,v)\in \mathbb{R}^n\times\mathbb{R}^n$. By making $A$ bigger, we can assume the following.
\begin{itemize}
\item[(L7)] 
\begin{equation}\label{L7}
A^{-1}|v|^2 - B \leq L(x,v) \leq A|v|^2 + B \qquad\text{for}\;(x,v)\in \mathbb{R}^n\times\mathbb{R}^n. \tag{L7}
\end{equation}
\end{itemize}
\end{rem}

We give a proof for the existence of a minimizer with bounded velocity to \eqref{rep_for} for the sake of readers' convenience in Appendix. 
This is an extremely important fact in our analysis and is a key element in the proof of Theorem \ref{thm:exp-rate} (see Remark \ref{rem: bounded speed} for further discussions).
To establish this point, the following lemma on the subdifferentials of $L(x,v)$ in $v$ is needed. For continuously differentiable Lagrangians, it is obvious, but we state here a slightly more general version.

\begin{lem}\label{growth-L'-lem} Let $L:\mathbb{R}^n\times\mathbb{R}^n \rightarrow\mathbb{R}$ be continuous and satisfy $\mathrm{(L5)}$ and \eqref{L7}. There exists $C_L >0$ such that for all $v\in \mathbb{R}^n$, we have
\begin{equation}\label{growth-L'}
 |\xi|\leq C_L(1+|v|) \qquad\text{whenever}\;\xi\in D^-_v L(x,v).
\end{equation}
\end{lem}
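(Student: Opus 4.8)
The plan is to exploit convexity of $v \mapsto L(x,v)$ together with the lower and upper quadratic bounds in \eqref{L7}. First I would recall the standard fact that for a convex function $g:\mathbb{R}^n \to \mathbb{R}$, any subgradient $\xi \in D^-g(v)$ satisfies the subgradient inequality $g(w) \geq g(v) + \xi \cdot (w-v)$ for all $w \in \mathbb{R}^n$. Apply this with $g(\cdot) = L(x,\cdot)$ at the point $v$, and choose a cleverly placed test point $w$: namely $w = v + \frac{\xi}{|\xi|}$ (assuming $\xi \neq 0$, the case $\xi = 0$ being trivial since then $|\xi| = 0 \leq C_L(1+|v|)$). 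This gives
\begin{equation*}
L(x, v + \tfrac{\xi}{|\xi|}) \geq L(x,v) + \xi \cdot \tfrac{\xi}{|\xi|} = L(x,v) + |\xi|.
\end{equation*}

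Next I would bound the two sides using \eqref{L7}. On the left, $L(x, v + \frac{\xi}{|\xi|}) \leq A|v + \frac{\xi}{|\xi|}|^2 + B \leq A(|v|+1)^2 + B$. On the right, $L(x,v) \geq A^{-1}|v|^2 - B$. Combining,
\begin{equation*}
|\xi| \leq L(x, v + \tfrac{\xi}{|\xi|}) - L(x,v) \leq A(|v|+1)^2 + B - A^{-1}|v|^2 + B.
\end{equation*}
The right-hand side is a quadratic in $|v|$ whose leading term is $(A - A^{-1})|v|^2$, which is not obviously controlled by $1 + |v|$ — this is the one genuine obstacle. To fix it, I would not use the test point at unit distance but rather scale it so the quadratic terms cancel to leading order; the cleanest route is to instead choose $w$ on the ray through $v$ from the origin, or better, to use the midpoint-type trick: apply the subgradient inequality at two symmetric points $w_\pm = v \pm t \frac{\xi}{|\xi|}$ and add, or simply pick $w = -v$ (or $w=0$) to get $L(x,0) \geq L(x,v) - \xi\cdot v \geq A^{-1}|v|^2 - B - |\xi||v|$, hence combined with $L(x,0) \leq B$ we get $|\xi||v| \geq A^{-1}|v|^2 - 2B$. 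That alone is not enough for small $|v|$, so I would combine it with the unit-step estimate above but optimized: take $w = v + s\frac{\xi}{|\xi|}$, giving $|\xi| \leq \frac{1}{s}\big(A(|v|+s)^2 + B - A^{-1}|v|^2 + B\big)$ for every $s > 0$, then minimize over $s$ (choosing, say, $s = 1 + |v|$ or the exact minimizer $s \asymp 1 + |v|$) to obtain $|\xi| \leq C_L(1+|v|)$ with $C_L$ depending only on $A, B$.

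The main step to get right is therefore the choice of the scaling $s$ in the test point: with $w = v + s\,\xi/|\xi|$ the subgradient inequality yields $s|\xi| \le L(x,v+s\xi/|\xi|) - L(x,v) \le A(|v|+s)^2 + B - (A^{-1}|v|^2 - B) = A s^2 + 2As|v| + (A - A^{-1})|v|^2 + 2B$; dividing by $s$ and taking $s = 1+|v|$ gives $|\xi| \le A(1+|v|) + 2A|v| + (A-A^{-1})\frac{|v|^2}{1+|v|} + \frac{2B}{1+|v|} \le C_L(1+|v|)$ since $\frac{|v|^2}{1+|v|} \le |v| \le 1+|v|$ and $\frac{1}{1+|v|} \le 1$. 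Setting $C_L := 4A + A + 2B = 5A + 2B$ (or any sufficient constant depending only on $A$ and $B$) completes the proof; the case $\xi = 0$ is immediate, and continuity of $L$ together with $\mathrm{(L5)}$ guarantees $D^-_v L(x,v)$ is nonempty and equals the convex subdifferential at each point, so no regularity beyond the hypotheses is used.
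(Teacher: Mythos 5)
Your proof is correct, and it takes a genuinely different route from the paper's. The paper first proves a local Lipschitz bound for $|v|\leq 1$ via a convexity interpolation argument (write $v=\lambda u + (1-\lambda)w$ with $w$ at unit distance past $v$ from $u$, then use the bounds on $L$ over $B(0,2)$), and then extends to all $v$ by the rescaling $L_r(x,v)=r^{-2}L(x,rv)$, which preserves the quadratic bounds \eqref{L7} and scales the subdifferential by $1/r$. Your argument is a single, direct application of the subgradient inequality at the optimally scaled test point $w = v + s\,\xi/|\xi|$ with $s = 1+|v|$; this avoids both the two-point interpolation and the separate rescaling step, and produces a comparable (in fact, if you track constants carefully, slightly better) value of $C_L$. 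You also correctly identify and repair the pitfall in the naive unit-step estimate, where the quadratic terms fail to cancel, by noting that the choice $s\asymp 1+|v|$ makes the division by $s$ tame the surviving $|v|^2$ term. One small remark on constants: since \eqref{L7} forces $A\geq 1$ (otherwise the two quadratic bounds contradict each other for large $|v|$), the coefficient $A-A^{-1}$ is nonnegative, and a careful tally gives $C_L = 4A + 2B$ rather than your $5A+2B$; this is harmless, since any constant depending only on $A,B$ suffices. Finally, your invocation that for a finite convex function the viscosity subdifferential $D^-_v L(x,\cdot)$ coincides with the (nonempty) convex-analytic subdifferential is the standard fact that legitimizes the subgradient inequality, and it is exactly what the paper's Lipschitz-plus-rescaling argument is implicitly using as well.
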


For simplicity, let us assume further that 
\begin{itemize}
\item[(L8)] $(x,v)\mapsto L(x,v)$ is continuously differentiable on $\mathbb{R}^n\times \mathbb{R}^n$.
\end{itemize}
This assumption can be removed in the proof of Theorem \ref{thm:exp-rate} due to the fact that the estimate \eqref{exp:rate} does not depend on the regularity of $H$, hence, we can approximate $H$ by convex, smooth Hamiltonians.

\begin{thm}[Existence of a minimizer]\label{Existence of minimizer} Let $L(x,v)$ be a continuous Lagrangian satisfying $\mathrm{(L5)}$, \eqref{L7} and $\mathrm{(L8)}$. Then, for each $x\in \mathbb{R}^n$, there exists $\eta\in \mathcal{A}_{x}$ such that $J[x,\eta] = u(x)$ and also $\Vert e^{-s/2}\dot{\eta}(s)\Vert_{L^2(0,\infty)} \leq C_4$ where $C_4$ depends only on $C_H,A,B$.
\end{thm}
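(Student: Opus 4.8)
The plan is to use the direct method of the calculus of variations, with the discount weight $e^{-s}$ turning the natural space of velocities into a Hilbert space. Fix $x\in\R^n$ and choose a minimizing sequence $\eta_j\in\cA_{x}$, so that $J[x,\eta_j]\downarrow u(x)$; by the a priori estimate \eqref{est.u} we may assume $J[x,\eta_j]\le C_H+1$ for all $j$. The lower bound in \eqref{L7}, together with $\int_0^\infty e^{-s}\,ds=1$, gives
\begin{equation*}
C_H+1\ge J[x,\eta_j]\ge A^{-1}\int_0^\infty e^{-s}|\dot\eta_j(s)|^2\,ds-B,
\end{equation*}
so that $\|e^{-s/2}\dot\eta_j\|_{L^2(0,\infty)}^2\le A(C_H+B+1)=:C_4^2$, a bound depending only on $C_H,A,B$. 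Hence $\{\dot\eta_j\}$ is bounded in the Hilbert space $\cH:=L^2\big((0,\infty);e^{-s}\,ds;\R^n\big)$, and after passing to a subsequence $\dot\eta_j\rightharpoonup w$ weakly in $\cH$.

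Next I would construct the candidate minimizer. Set $\eta(t):=x+\int_0^t w(s)\,ds$. For $0\le t'\le t\le T$, Cauchy--Schwarz yields $|\eta_j(t)-\eta_j(t')|\le (e^t-e^{t'})^{1/2}C_4$, so $\{\eta_j\}$ is uniformly bounded and equicontinuous on each $[0,T]$; by Arzel\`a--Ascoli and a diagonal argument a further subsequence converges locally uniformly on $[0,\infty)$ to some continuous curve, which must equal $\eta$ (pass to the limit in $\eta_j(t)=x+\int_0^t\dot\eta_j(s)\,ds$, testing the weak convergence against the element $s\mapsto e^{s}\mathbf{1}_{[0,t]}(s)$ of $\cH$). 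Thus $\eta\in\mathrm{AC}([0,\infty);\R^n)$ with $\eta(0)=x$ and $\dot\eta=w$, so $\eta\in\cA_x$, and weak lower semicontinuity of the norm gives $\|e^{-s/2}\dot\eta\|_{L^2(0,\infty)}\le\liminf_j\|e^{-s/2}\dot\eta_j\|_{L^2(0,\infty)}\le C_4$, which is the stated estimate on the minimizer.

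It remains to show $J[x,\eta]\le u(x)$ (the reverse inequality being trivial from the definition of $u$), and the heart of the matter is a finite-horizon lower semicontinuity estimate. By convexity $\mathrm{(L5)}$ and differentiability $\mathrm{(L8)}$, the subgradient inequality $L(\eta_j(s),-\dot\eta_j(s))\ge L(\eta_j(s),-\dot\eta(s))+D_vL(\eta_j(s),-\dot\eta(s))\cdot(\dot\eta(s)-\dot\eta_j(s))$ holds pointwise; multiplying by $e^{-s}$ and integrating over $[0,T]$,
\begin{equation*}
\int_0^T e^{-s}L(\eta_j,-\dot\eta_j)\,ds\ge\int_0^T e^{-s}L(\eta_j,-\dot\eta)\,ds+\int_0^T e^{-s}D_vL(\eta_j,-\dot\eta)\cdot(\dot\eta-\dot\eta_j)\,ds.
\end{equation*}
Since $\eta_j\to\eta$ uniformly on $[0,T]$ and $L$ is continuous, the first term on the right converges to $\int_0^T e^{-s}L(\eta,-\dot\eta)\,ds$ by dominated convergence, using $|L(x,v)|\le A|v|^2+B$ from \eqref{L7} and $\dot\eta\in\cH$. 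For the second term, Lemma \ref{growth-L'-lem} gives $|D_vL(\eta_j(s),-\dot\eta(s))|\le C_L(1+|\dot\eta(s)|)$, so $D_vL(\eta_j,-\dot\eta)\to D_vL(\eta,-\dot\eta)$ strongly in $\cH(0,T)$ (dominated convergence, dominating function $C_L(1+|\dot\eta|)\in\cH$), while $\dot\eta-\dot\eta_j\rightharpoonup 0$ weakly in $\cH(0,T)$; the pairing of a strongly convergent against a weakly convergent sequence tends to $0$. Hence $\liminf_j\int_0^T e^{-s}L(\eta_j,-\dot\eta_j)\,ds\ge\int_0^T e^{-s}L(\eta,-\dot\eta)\,ds$. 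Finally, using $L\ge -C_2$ from $\mathrm{(L2)}$ we get $\int_T^\infty e^{-s}L(\eta_j,-\dot\eta_j)\,ds\ge -C_2e^{-T}$, so $u(x)=\lim_j J[x,\eta_j]\ge\int_0^T e^{-s}L(\eta,-\dot\eta)\,ds-C_2e^{-T}$; letting $T\to\infty$ (monotone convergence applied to $L+C_2\ge 0$) yields $u(x)\ge J[x,\eta]$, whence $J[x,\eta]=u(x)$. The main obstacle is precisely this lower semicontinuity step, where $\dot\eta_j$ converges only weakly and $L$ is nonlinear in $v$; the convexity-plus-differentiability subgradient device, combined with the growth control on $D_vL$ from Lemma \ref{growth-L'-lem}, is what carries it through.
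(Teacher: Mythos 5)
Your proof follows essentially the same route as the paper's: a minimizing sequence, weak $L^2(e^{-s}ds)$ compactness of the velocities, identification of the limiting curve, and then the convexity subgradient inequality combined with the growth bound on $D_vL$ from Lemma \ref{growth-L'-lem} to pass a weak--strong pairing to the limit. Your finite-horizon truncation to $[0,T]$ is a harmless refinement (the paper applies dominated convergence directly on $[0,\infty)$, which works equally well since $|L(x,v)|\le A|v|^2+B$ and $\dot\eta\in L^2(e^{-s}ds)$). One small slip: in the tail estimate you invoke $L\ge -C_2$ from $\mathrm{(L2)}$, but $\mathrm{(L2)}$ is not among the stated hypotheses of the theorem; you should instead use the lower bound $L(x,v)\ge A^{-1}|v|^2 - B\ge -B$ that follows directly from \eqref{L7}, giving $\int_T^\infty e^{-s}L\,ds\ge -Be^{-T}$ and the same conclusion.
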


The existence of minimizers of smooth Lagrangian is sufficient for our proof of Theorem \ref{thm:exp-rate} since the last estimate does not depend on the smoothness of $L$ or $H$. Clearly, a minimizer for a general continuous Lagrangian can be obtained via approximation of smooth Lagrangians (see Appendix).
\medskip

A minimizer to \eqref{rep_for} satisfies the following properties.

\begin{lem}\label{DPP_lemma} Let $x\in \mathbb{R}^n$ and $\eta$ be a corresponding minimizer. For any $t>0$, we have
\begin{equation}\label{DPP_lem}
u(x) = \int_0^t  e^{-s} L\big(\eta(s),-\dot{\eta}(s)\big)\;ds  + e^{-t}u\left(\eta(t)\right).
\end{equation}
Furthermore, for every $t,h>0$, we have 
\begin{equation}\label{char_4}
u(\eta(t)) = e^{t}\int_t^\infty e^{-s}L\big(\eta(s),-\dot{\eta}(s)\big)\;ds
\end{equation}
and
\begin{equation}\label{char_5}
e^{-t}u\left(\eta(t)\right) = \int_t^{t+h}  e^{-s} L\big(\eta(s),-\dot{\eta}(s)\big)\;ds  + e^{-(t+h)}u\left(\eta(t+h)\right).
\end{equation}
\end{lem}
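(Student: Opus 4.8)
The plan is to obtain all three identities from the optimality of $\eta$ together with the elementary fact that a time-translate of an admissible curve is again admissible; I will write $L_\eta(s):=L\big(\eta(s),-\dot\eta(s)\big)$ for brevity and use throughout that $u(x)=\int_0^\infty e^{-s}L_\eta(s)\,ds$ since $\eta$ is a minimizer.

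First I would prove \eqref{DPP_lem}. For the inequality $u(x)\le\int_0^t e^{-s}L_\eta(s)\,ds+e^{-t}u(\eta(t))$, given $\varepsilon>0$ I pick $\zeta\in\mathcal{A}_{\eta(t)}$ with $J[\eta(t),\zeta]\le u(\eta(t))+\varepsilon$ and concatenate: set $\gamma:=\eta$ on $[0,t]$ and $\gamma(s):=\zeta(s-t)$ on $[t,\infty)$. Then $\gamma\in\mathcal{A}_x$ --- it is absolutely continuous because $\zeta(0)=\eta(t)$ --- and a change of variables $r=s-t$ in the tail gives
\[
u(x)\le J[x,\gamma]=\int_0^t e^{-s}L_\eta(s)\,ds+e^{-t}J[\eta(t),\zeta]\le\int_0^t e^{-s}L_\eta(s)\,ds+e^{-t}\big(u(\eta(t))+\varepsilon\big),
\]
and letting $\varepsilon\to0$ yields the claim; alternatively this is just the dynamic programming principle of Theorem \ref{DPP} in the unconstrained case $\Omega=\mathbb{R}^n$. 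For the reverse inequality I split $u(x)=\int_0^t e^{-s}L_\eta(s)\,ds+\int_t^\infty e^{-s}L_\eta(s)\,ds$, substitute $s=t+r$ in the tail, and set $\zeta(r):=\eta(t+r)$; since $\zeta\in\mathcal{A}_{\eta(t)}$, the tail equals $e^{-t}J[\eta(t),\zeta]\ge e^{-t}u(\eta(t))$, so $u(x)\ge\int_0^t e^{-s}L_\eta(s)\,ds+e^{-t}u(\eta(t))$. Combining the two directions gives \eqref{DPP_lem}.

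Next, \eqref{char_4} would fall out for free: comparing the trivial splitting $u(x)=\int_0^t e^{-s}L_\eta(s)\,ds+\int_t^\infty e^{-s}L_\eta(s)\,ds$ with \eqref{DPP_lem} forces $\int_t^\infty e^{-s}L_\eta(s)\,ds=e^{-t}u(\eta(t))$, which is \eqref{char_4} after multiplying by $e^t$ (and incidentally shows that the shifted curve $r\mapsto\eta(t+r)$ is itself optimal for $u(\eta(t))$). Then \eqref{char_5} follows by applying \eqref{char_4} at both $t$ and $t+h$ and subtracting,
\[
e^{-t}u(\eta(t))=\int_t^\infty e^{-s}L_\eta(s)\,ds=\int_t^{t+h}e^{-s}L_\eta(s)\,ds+\int_{t+h}^\infty e^{-s}L_\eta(s)\,ds=\int_t^{t+h}e^{-s}L_\eta(s)\,ds+e^{-(t+h)}u(\eta(t+h)),
\]
or equivalently by applying \eqref{DPP_lem} over the interval $[0,h]$ to the shifted minimizer.

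I do not expect a genuine obstacle here; the only points that need a word of care are that the dynamic programming principle stated in Theorem \ref{DPP} for a general $\overline\Omega$ holds verbatim in the unconstrained setting of \eqref{rep_for}, and that concatenations and time-translates of absolutely continuous curves remain absolutely continuous and admissible --- both standard within the optimal control framework already in place.
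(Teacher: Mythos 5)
Your proposal is correct and follows essentially the same route as the paper: both proofs combine the dynamic programming inequality $u(x)\le\int_0^t e^{-s}L_\eta\,ds+e^{-t}u(\eta(t))$ with the tail bound $e^{-t}u(\eta(t))\le\int_t^\infty e^{-s}L_\eta\,ds$ coming from the time-shifted curve, and then observe that the chain of inequalities closes up to the known value $u(x)=\int_0^\infty e^{-s}L_\eta\,ds$, forcing equalities everywhere. The only cosmetic difference is that you prove the two directions of \eqref{DPP_lem} separately (spelling out the concatenation argument behind Theorem \ref{DPP}) whereas the paper collapses them into a single sandwich inequality; the substance is identical.
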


\begin{lem}\label{bound_on_doteta} Let $x\in \mathbb{R}^n$ and $\eta$ be a minimizer to \eqref{rep_for} associated with it. Then, there exists a constant $C_5>0$ depending only on $C_H,A,B$ such that $|\dot{\eta}(s)|\leq C_5$ for a.e. $s\in (0,\infty)$.
\end{lem}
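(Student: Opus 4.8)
The plan is to control the adjoint variable $p(s):=D_vL\bigl(\eta(s),-\dot\eta(s)\bigr)$ along the minimizer $\eta$ by the Lipschitz constant $C_H$ of $u$, and then to convert this into a pointwise bound on $|\dot\eta(s)|$ by inverting the Legendre transform and using the quadratic growth of $H$. For the first part I would show that $p(s)\in D^+u(\eta(s))$ for a.e.\ $s>0$, which, since $u$ is $C_H$-Lipschitz by \eqref{est.u}, forces $|p(s)|\le C_H$. Fix $\xi\in\mathbb{R}^n$ and, for small $h>0$, test the dynamic programming principle at $\eta(s)$ against the competitor $\zeta$ which on $[s,s+h]$ equals $\eta$ translated by $\bigl(1-\tfrac{\tau-s}{h}\bigr)h\xi$ and equals $\eta$ afterward, so that $\zeta(s)=\eta(s)+h\xi$, $\zeta(s+h)=\eta(s+h)$ and $-\dot\zeta=-\dot\eta+\xi$ on $[s,s+h]$. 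Subtracting \eqref{char_5} for $\eta$ from the suboptimality inequality for $\zeta$,
\[
u(\eta(s)+h\xi)-u(\eta(s))\le\int_s^{s+h}e^{-(\tau-s)}\bigl[L(\zeta(\tau),-\dot\eta(\tau)+\xi)-L(\eta(\tau),-\dot\eta(\tau))\bigr]\,d\tau .
\]
I would split the bracket into a velocity increment and a position increment. The velocity increment $L(\eta(\tau),-\dot\eta(\tau)+\xi)-L(\eta(\tau),-\dot\eta(\tau))$ is $\le D_vL(\eta(\tau),-\dot\eta(\tau)+\xi)\cdot\xi$ by convexity of $L(\eta(\tau),\cdot)$; using Lemma \ref{growth-L'-lem} and $\dot\eta\in L^2_{\mathrm{loc}}$ (Theorem \ref{Existence of minimizer}), dividing by $h$ and letting $h\to0^+$ at a Lebesgue point gives $D_vL(\eta(s),-\dot\eta(s)+\xi)\cdot\xi$, and replacing $\xi$ by $t\xi$ and letting $t\to0^+$ yields the bound $p(s)\cdot\xi$. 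The position increment $L(\zeta(\tau),-\dot\eta(\tau)+\xi)-L(\eta(\tau),-\dot\eta(\tau)+\xi)$ is $\le\tilde{\omega}_{|\dot\eta(\tau)|+|\xi|}(h|\xi|)$ by $\mathrm{(L3)}$ and, crucially, $\le A(|\dot\eta(\tau)|+|\xi|)^2+2B$ by \eqref{L7}; its average over $[s,s+h]$ then tends to $0$, since on $\{|\dot\eta|\le M\}$ it is uniformly small, while on $\{|\dot\eta|>M\}$ it is absorbed by Chebyshev's inequality together with Lebesgue differentiation once $M>|\dot\eta(s)|$. Altogether $D^+_\xi u(\eta(s))\le p(s)\cdot\xi$ for every $\xi$, i.e.\ $p(s)\in D^+u(\eta(s))$ and $|p(s)|\le C_H$.

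For the second part, since $L(x,\cdot)=H(x,\cdot)^*$ and $\mathrm{(H5)}$ holds, the identity $p(s)=D_vL(\eta(s),-\dot\eta(s))$ is equivalent to $-\dot\eta(s)\in\partial_pH(\eta(s),p(s))$ for a.e.\ $s$. Now for any $x\in\mathbb{R}^n$, any $q$ with $|q|\le C_H$, and any $q^*\in\partial_pH(x,q)$, the subgradient inequality evaluated at $q+t\,q^*/|q^*|$ together with \eqref{H7} gives $t|q^*|\le A(C_H+t)^2+2B$ for all $t>0$; optimizing in $t$ yields $|q^*|\le 2\sqrt{A\,(AC_H^2+2B)}+2AC_H=:C_5$, a constant depending only on $C_H,A,B$. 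Applying this with $x=\eta(s)$, $q=p(s)$ and $q^*=-\dot\eta(s)$ gives $|\dot\eta(s)|\le C_5$ for a.e.\ $s\in(0,\infty)$, which is the claim.

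The main obstacle is the first part: establishing $|p(s)|\le C_H$ without knowing in advance that $\dot\eta$ is bounded, so that the position-increment term must be absorbed using only $\dot\eta\in L^2_{\mathrm{loc}}$ and the quadratic growth \eqref{L7} of $L$ rather than any pointwise Lipschitz-in-$x$ control of $L$; the Lebesgue-point bookkeeping for this absorption step is where the real work lies. Once $|p(s)|\le C_H$ is in hand, the remaining estimate is routine convex analysis. Finally, when $L$ is only continuous one proves the lemma first for the smooth Lagrangians approximating $L$, noting that $C_5$ is independent of their regularity, and passes to the limit as in the proof of Theorem \ref{Existence of minimizer}.
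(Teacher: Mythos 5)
Your proof is correct, but it takes a genuinely different and more elaborate route than the paper's. The paper argues directly on the DPP identity \eqref{char_5}: it picks a $\mathrm{C}^1$ test function $\varphi$ touching $u$ from below at $\eta(t)$, writes $e^{-t}u(\eta(t))-e^{-(t+h)}u(\eta(t+h))\le e^{-t}\varphi(\eta(t))-e^{-(t+h)}\varphi(\eta(t+h))$, divides by $h$ and sends $h\to 0^+$ at Lebesgue points of $s\mapsto L(\eta(s),-\dot\eta(s))$ to get $L(\eta(t),-\dot\eta(t))\le \varphi(\eta(t))-D\varphi(\eta(t))\cdot\dot\eta(t)\le C_H+C_H|\dot\eta(t)|$, and closes with the quadratic lower bound in $\mathrm{(L7)}$, which yields $A^{-1}|\dot\eta(t)|^2-B\le C_H+C_H|\dot\eta(t)|$ and hence the claim. (The paper's step requires $D^-u(\eta(t))\neq\emptyset$; this is slightly delicate but unnecessary, since the same one-sided bound $u(\eta(t))-u(\eta(t+h))\le C_H|\eta(t)-\eta(t+h)|$ follows directly from Lipschitz continuity of $u$.) You instead first establish the adjoint estimate $p(s)=D_vL(\eta(s),-\dot\eta(s))\in D^+u(\eta(s))$, hence $|p(s)|\le C_H$, by a competitor-trajectory perturbation of the DPP, and then transfer the bound to $\dot\eta$ through Legendre duality $-\dot\eta(s)\in\partial_p H(\eta(s),p(s))$ together with the quadratic growth \eqref{H7} of $H$ rather than \eqref{L7} of $L$. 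Both arguments are sound; yours is longer and requires the Lebesgue-point bookkeeping to kill the position increment (which, as you note, is the real work), but it buys the additional information $|p(s)|\le C_H$ on the momentum, a fact the paper uses in Remark~\ref{remark:Euler-Lagrange} but does not prove at that level of generality. Your closing remark on reducing to smooth $L$ and passing to the limit matches the paper's strategy as well.
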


\begin{rem}\label{remark:Euler-Lagrange} We provide here a connection between a minimizer $\eta$ of $u(x) = J[x,\eta]$ and some properties in the view of the method of characteristics. If $H$ is assumed to be $\mathrm{C}^2$, then $L\in \mathrm{C}^2$ and $\eta$ is a weak solution to the Euler-Lagrange equation 
\begin{equation}\label{E-L}
D_xL(\eta(s),-\dot{\eta}(s)) - D_vL(\eta(s),-\dot{\eta}(s)) + \frac{d}{ds}\Big(D_vL(\eta(s),-\dot{\eta}(s))\Big) = 0.
\end{equation}
Assume that $\eta\in \mathrm{C}^2$ (it holds if, for instance $L\in \mathrm{C}^{2,\alpha}$ for some $\alpha\in (0,1)$). Then, one can define the momentum $\textbf{p}(s) = D_vL(\eta(s),-\dot{\eta}(s))$ and show that
\begin{equation}\label{characteristic}
u(\eta(t)) + H\big(\eta(t),\textbf{p}(t)\big) = 0
\end{equation}
for $t>0$. Indeed, for every fixed $x\in \mathbb{R}^n$, we recall that
\begin{equation}\label{ll}
v \in D_p^-H(x,p) \quad \Leftrightarrow\quad p \in  D^-_vL(x,v) \quad\Leftrightarrow\quad H(x,p) + L(x,v) = p\cdot v.
\end{equation}
Using \eqref{ll} we can deduce that 
\begin{equation*}
\begin{cases}
\qquad\qquad\frac{d}{ds}\left(e^{-s}\textbf{p}(s)\right) &= e^{-s}D_xL\left(\eta(s),\dot{\eta}(s)\right),\\
\;\,\quad\frac{d}{ds}\left(H\left(\eta(s),\textbf{p}(s)\right)\right) &= -\dot{\eta}(s)\cdot \textbf{p}(s),\\
\frac{d}{ds}\left(e^{-s}H\left(\eta(s),\textbf{p}(s)\right)\right) &= e^{-s}L\left(\eta(s),-\dot{\eta}(s)\right).
\end{cases}
\end{equation*}
From that, we can derive the characteristic ODEs for $s>0$, which are
\begin{equation*}
\begin{cases}
-\dot{\eta}(s) &= D_pH(\eta(s),\textbf{p}(s)),\\
\;\; \, \dot{\textbf{p}}(s) &= \textbf{p}(s) - D_xL\big(\eta(s),-\dot{\eta}(s)\big).
\end{cases}
\end{equation*}
This together with \eqref{char_4} yields that
\begin{equation*}
u(\eta(t)) + H\big(\eta(t),\textbf{p}(t)\big) = Ce^t \qquad\text{where}\qquad C = \lim_{a\rightarrow \infty} e^{-a}H\big(\eta(a),\textbf{p}(a)\big).
\end{equation*}
Lemma \ref{growth-L'-lem} together with Lemma \ref{bound_on_doteta} gives us a uniform bound on $\textbf{p}$, thus $C = 0$. Hence, \eqref{characteristic} follows.
\end{rem}

Now we give a proof for Theorem \ref{thm:exp-rate}. Recall that we have the value function
\begin{equation}\label{u_kk}
u_k(x) =  \inf_{\eta\in \mathcal{A}_{x}^k} \int_0^\infty e^{-s}L\left(\eta(s),-\dot{\eta}(s)\right)\;ds,
\end{equation}
where $\mathcal{A}^k_{x} = \Big\lbrace \eta\in \mathrm{AC}\big([0,\infty);\mathbb{R}^n\big): \eta(0) = x\;\text{and}\; \eta(s)\in \overline{B(0,k)}\;\text{for}\;s\geq 0 \Big\rbrace$. Then, $u_k$ solves the state-constraint problem \eqref{u_k}.

\begin{proof}[Proof of Theorem \ref{thm:exp-rate}] Let $k \in \N$ be given. We may assume that $H$ satisfies \eqref{H6} and \eqref{H7} up to modification for $|p|$ large enough. Also, since the final estimate does not depend on the smoothness of $L$, we can assume $H$ is smooth and thus $L$ is smooth without any loss of generality. Clearly, $\mathcal{A}_{x}^k \subset \mathcal{A}_{x}$ for any $x\in \overline{B(0,k)} $, which implies that $u_k(x) \geq u(x)$. 
\medskip

For $x\in {B(0,k)}$, let $\eta\in \mathcal{A}_{x}$ be a minimizer to \eqref{rep_for}, if $\eta(s)\in \overline{B(0,k)}$ for all $s>0$, then $\eta\in \mathcal{A}_{x}^k$ as well, hence $u(x) = u_k(x)$. Otherwise, there exists $t>0$ such that $\eta(t)\in \partial B(0,k)$ and $\eta(s)\in B(0,k)$ for all $s\in (0,t)$. By Lemma \ref{bound_on_doteta}, we have
\begin{equation*}
k =|\eta(t)| \leq |\eta(0)| + \int_0^{t} |\dot{\eta}(s)|\;ds \leq |x| + C_5t,
\end{equation*}
which implies that $t\geq \frac{k-|x|}{C_5}$. Let us define
\begin{equation*}
\gamma(s) = \begin{cases}
\eta(s) &\text{if}\; s\in [0,t],\\
\eta(t) &\text{if}\; s\in [t,\infty),
\end{cases}
\end{equation*}
so that $\gamma\in \mathcal{A}_{x}^k$.
Using Lemma \ref{DPP_lemma}, we have
\begin{align*}
u(x) &= \int_0^{t} e^{-s}L\left({\eta}(s),-\dot{\eta}(s)\right)\;ds + e^{-t}u(\eta(t)) \\
&\geq  \int_0^{t} e^{-s}L\left({\gamma}(s),-\dot{\gamma}(s)\right)\;ds - C_H e^{-t} \\
&\geq  \int_0^\infty e^{-s}L\left({\gamma}(s),-\dot{\gamma}(s)\right)\;ds - C_1 e^{-t}- C_He^{-t}\\
&\geq u_k(x) - \left((C_1+C_H) e^{\frac{|x|}{C_5}}\right) e^{-\frac{k}{C_5}}.
\end{align*}
Consequently, we obtain \eqref{exp:rate}. The conclusion for $|x|\leq R$ follows immediately.
\end{proof}

\begin{rem}\label{rem: bounded speed}
Here, we note that the constants in the proof above do not depend on the regularity of the Lagrangian. As long as a minimizer exists, we get the same exponential rate of convergence. See Appendix for a discussion on the existence of minimizers.
It is worth noting here that, for each $x\in \overline{B(0,k)}$, the existence of a minimizer  $\eta\in \mathcal{A}_{x}$  to  \eqref{rep_for} with bounded velocity is a nontrivial fact and plays an essential role in the proof above.
Moreover, the bound on the velocity of $\eta$ only depends on $C_H, A, B$.
\end{rem}

In the rest of this section, we provide two explicit examples to show that the rate $\mathcal{O}\left(e^{-\frac{k}{C}}\right)$ is indeed optimal.

\subsection{Examples with exponential rate of convergence}

\begin{prop}\label{example1} Let $H(p):\mathbb{R}\rightarrow\mathbb{R}$ be defined by $H(p) = |p-1|-1$ for $p\in [0,2]$ and $H(p)\geq 0$ elsewhere such that $H$ is continuous and coercive. Let $u_k$ be the solution to \eqref{u_k} on $[-k,k]$, then $u_k\rightarrow 0$ locally uniformly on $\mathbb{R}$ as $k\rightarrow\infty$. Here, $u\equiv 0$ is the unique solution to \eqref{u_eqn}. Furthermore, we have $u_k(k)=1$ for all $k\in \mathbb{N}$ and
\begin{equation*}
u_k(x) \geq e^{-2k} \;\text{on}\;[-k,k].
\end{equation*}
\end{prop}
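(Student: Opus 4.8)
The plan is to treat the one-dimensional state-constraint problem on $[-k,k]$ directly, first identifying $u$ and the qualitative convergence, then pinning down the exact boundary value $u_k(k)=1$, and finally producing the exponential lower bound by an explicit subsolution comparison. Since $H(1)=-1=\min_{[0,2]}H$ and $H\geq 0$ outside $[0,2]$, the constant function $u\equiv 0$ solves $u+H(Du)=0$ in $\R$ (it is a classical solution with $Du=0$ and $H(0)=0$), and by uniqueness it is the solution to \eqref{u_eqn}; local uniform convergence $u_k\to u$ then follows from the general theory recalled earlier (a priori Lipschitz bounds, Arzel\`a--Ascoli, stability and uniqueness), exactly as in Section~\ref{P1-doubling}. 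The a priori estimate $\|u_k\|_{L^\infty}+\|u_k'\|_{L^\infty}\leq C_H$ confines us to the region where $H$ is governed by the formula $H(p)=|p-1|-1$ on the relevant range of gradients.

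Next I would compute $u_k$ explicitly. By symmetry one expects $u_k$ even, and on $(0,k)$ one looks for the branch with $p=u_k'(x)\in[0,1]$, on which $H(p)=-(1-p+1)\dots$ — more precisely $H(p)=|p-1|-1=-p$ for $p\in[0,1]$ — so the equation $u_k+H(u_k')=0$ becomes $u_k = u_k'$, i.e.\ $u_k(x)=A e^{x}$ on $(0,k)$. The state-constraint (supersolution) condition at the boundary $x=k$ forces $u_k(k)\geq -\min H=1$; taking the maximal subsolution (which is the solution, by the remark after Theorem~\ref{CP continuous}) and checking the supersolution test at $x=0$ as in Proposition~\ref{prop:linear} shows the constant $A$ is determined by $u_k(k)=1$, giving $u_k(x)=e^{x-k}$ on $[0,k]$ and $u_k(x)=e^{-x-k}=e^{|x|-k}$ for $x\in[-k,k]$. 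One must verify the viscosity tests: in $(-k,k)\setminus\{0\}$ the function is $C^1$ and solves the equation classically with $|u_k'|\le 1<2$ so only the branch $H(p)=|p-1|-1$ is used; at $x=k$ (and $x=-k$) $u_k=1\ge-\min H$ handles the supersolution test and there is no subsolution test on the boundary; at $x=0$ only the supersolution test is needed, and for $p\in D^-u_k(0)$ one has $|p|\le e^{-k}\le 1$, whence $u_k(0)+H(p)=e^{-k}-|p|\ge 0$. Strictly speaking, since $H$ is only prescribed on $[0,2]$ and assumed continuous and coercive elsewhere, the explicit formula may depend on the precise extension; to be safe I would instead argue the lower bound $u_k\ge e^{-2k}$ robustly, as follows.

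For the lower bound I would exhibit an explicit subsolution $\psi_k$ of \eqref{u_k} with $\psi_k\ge e^{-2k}$ somewhere, or rather use the comparison characterization: $u_k$ is the \emph{maximal} subsolution of $u+H(Du)\le0$ in $(-k,k)$ among functions in $\mathrm{C}(\overline{B(0,k)})$. It therefore suffices to construct one subsolution $\psi_k$ with $\psi_k(x)\ge e^{-2k}$ on $[-k,k]$; then $u_k\ge\psi_k\ge e^{-2k}$. A natural candidate is $\psi_k(x)=c_k\cosh(x)$ or, simpler, $\psi_k(x)=c_k(e^{x}+e^{-x})/2$ with $c_k$ chosen so that $\psi_k'\in[0,1)$ on $[0,k]$: one needs $c_k\sinh(k)\le 1$, i.e.\ $c_k=1/\sinh(k)$, which is $\ge 2e^{-k}\ge e^{-2k}$ for $k\ge 1$, and then $\psi_k+H(\psi_k')=\psi_k-\psi_k'=c_k e^{-x}\ge 0>$ is the wrong sign — so instead take the increasing branch on $[0,k]$ only and reflect, or use $\psi_k(x)=e^{|x|-2k}$ directly: on $(-k,k)\setminus\{0\}$, $|\psi_k'|=\psi_k\le e^{-k}<2$ and $\psi_k+H(\psi_k')=\psi_k-|\psi_k'|=0$, and at $x=0$ the subsolution test uses $D^+\psi_k(0)=\varnothing$ (since $\psi_k$ has a downward corner), so $\psi_k$ is a subsolution in all of $(-k,k)$ and lies in $\mathrm{C}(\overline{B(0,k)})$; hence $u_k\ge\psi_k\ge e^{-2k}$ on $[-k,k]$. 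The boundary value $u_k(k)=1$ follows because $u_k(k)\ge1$ from the state-constraint supersolution inequality at $x=k$ (any $p\in D^-u_k(k)$ in the one-sided sense gives $u_k(k)+H(p)\ge0$, and since $u_k'\le C_H$ near $k$ one gets $u_k(k)\ge-\min H=1$), while $u_k(k)\le1$ from the a priori bound $u_k\le C_1$ combined with the explicit construction, or directly from comparison with the supersolution that is identically $1$ near the boundary.

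\textbf{Main obstacle.} The delicate point is the behavior at the two endpoints $x=\pm k$: one must argue carefully that the implicit state-constraint boundary condition forces $u_k(\pm k)=1$ exactly (not just $\ge1$), and that the explicit formula is insensitive to the unspecified extension of $H$ outside $[0,2]$ — this is handled by noting that the a priori gradient bound keeps all relevant gradients inside $[0,2]$ so the extension never enters, together with the maximal-subsolution characterization. The corner at $x=0$ is routine (no super/subdifferential obstruction in the favorable direction), and the exponential lower bound is then immediate from the subsolution $e^{|x|-2k}$ via comparison.
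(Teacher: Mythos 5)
Your argument contains a fundamental error: you implicitly assume $H$ is even in $p$, which leads you to propose even candidates $u_k(x)=e^{|x|-k}$ and $\psi_k(x)=e^{|x|-2k}$. But $H(p)=|p-1|-1$ on $[0,2]$ with $H\geq 0$ elsewhere is sharply asymmetric: $\min H$ is attained at $p=1$, and $H(p)\geq 0$ for all $p<0$. Consequently, on $(-k,0)$ you have $\psi_k'(x)=-e^{-x-2k}<0$, so $H(\psi_k'(x))\geq 0$ and therefore $\psi_k(x)+H(\psi_k'(x))\geq \psi_k(x)>0$, i.e.\ $\psi_k$ is \emph{not} a subsolution. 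The step ``$\psi_k+H(\psi_k')=\psi_k-|\psi_k'|$'' silently replaces $H(\psi_k')$ by $-|\psi_k'|$, which is valid only when $\psi_k'\in[0,1]$. Likewise your explicit formula $e^{|x|-k}$ fails the subsolution test on $(-k,0)$, so it cannot be the state-constraint solution; ``only the branch $|p-1|-1$ is used because $|u_k'|\leq 1$'' confuses $|u_k'|$ with $u_k'$.

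The paper's solution is the \emph{monotone} function $u_k(x)=e^{x-k}$, not an even one. On $(-k,k)$, $u_k'(x)=e^{x-k}\in(0,1]$, so $H(u_k')=-u_k'$ and $u_k+H(u_k')=0$ classically. At $x=k$ the supersolution test is trivial since $u_k(k)=1$ and $\min H=-1$. At $x=-k$, any test function touching from below has $\varphi'(-k)\leq u_k'(-k)=e^{-2k}$, and whether $\varphi'(-k)\in[0,e^{-2k}]$ (where $H(p)=-p\geq -e^{-2k}$) or $\varphi'(-k)<0$ (where $H\geq 0$), one gets $u_k(-k)+H(\varphi'(-k))\geq 0$. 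Then $u_k(k)=1$ and $u_k(x)=e^{x-k}\geq e^{-2k}$ on $[-k,k]$ are read off directly. Note also that your claim ``$u_k(k)\geq 1$ from the supersolution inequality alone'' is not justified: to conclude $u_k(k)\geq -\min H$ you would need $p=1$ to lie in $D^-u_k(k)$, which is not a priori known and in fact requires knowing $u_k$ near $x=k$ first. Your concern about the extension of $H$ outside $[0,2]$ is resolved for a different reason than you gave: the correct $u_k$ has $u_k'\in(0,1]$ in the interior, so the extension never enters the PDE, and at $x=k$ only the global bound $H\geq -1$ is used.
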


\begin{proof} It is clear that $u_k(x) = e^{x-k}$ solves $v(x)+H(v'(x)) = 0$ in $(-k,k)$ in the classical sense, and indeed, in viscosity sense. We need to verify that $u_k$ is a viscosity supersolution on $[-k,k]$. Let $u_k-\varphi$ has a local minimum at $x=-k$ for $\varphi(x)\in \mathrm{C}^1 (\R)$. Clearly, we can see that 
\begin{equation*}
\varphi'(-k) \leq u_k'(-k) = e^{-2k},
\end{equation*}
which implies
$e^{-2k} + H(\varphi'(-k)) \geq 0$. On the other hand, at $x=k$, one has
\begin{equation*}
u_k(k)+H(\varphi'(k)) =1+H(\varphi'(k)) \geq 0
\end{equation*}
since by definition of $H$, it is bounded below by $-1$. Therefore, $u_k(x) = e^{x-k}$ is the unique viscosity solution to \eqref{u_k}, and furthermore $e^{-2k}\leq u_k(x) \leq \left(e^{|x|}\right)e^{-k}$ for all $x\in [-k,k]$. In addition to that, we have $u_k(k) = 1$ for all $k \in \N$, hence, the convergence fails when $x = k$.
\end{proof}

\subsection{Optimal control formulations}

We give another example  from the optimal control theory point of view (see \cite{Soner1986}). 
Let us recall briefly the setting of optimal control as follows. Let $U$ be a compact metric space. 
We regard a control as a Borel measurable map $\alpha:[0,\infty)\mapsto U$. 
Let $\Omega$ be an open subset of $\mathbb{R}^n$ with the connected boundary satisfying \eqref{A1}.
 We also assume that $b = b(x,a): \overline{\Omega} \times U\rightarrow \mathbb{R}^n$, $f = f(x,a):\overline{\Omega}\times U\rightarrow \mathbb{R}$ satisfy 
\begin{align*}
&\sup_{a\in U} \left|b(x,a) - b(y,a)\right| \leq L(b)|x-y| & &\text{for all}\;x,y\in \overline{\Omega},\\
&\sup_{a\in U} \left|b(x,a)\right|\leq K(b)& &\text{for all}\;x\in \overline{\Omega},\\
&\sup_{a\in U} \left|f(x,a) - f(y,a)\right| \leq \omega_f(|x-y|)& &\text{for all}\;x,y \in \overline{\Omega}, \\
&\sup_{a\in U} \left|f(x,a)\right| \leq K(f)& &\text{for all}\;x \in \overline{\Omega}, 
\end{align*}
where $K(b), L(b),K(f)$ are positive constants and $\omega_f$ is a nondecreasing continuous function with $\omega_f(0^+) = 0$.
\medskip

For each $x\in \overline{\Omega}$ and a given control  $\alpha(\cdot):[0,\infty)\rightarrow U$, let $y^{x,\alpha}(t)$ be a controlled process (we will write $\alpha$ instead of $\alpha(\cdot)$ as a control for simplicity), which is a solution to
\begin{equation*}
\begin{cases}
\frac{d}{dt} y^{x,\alpha}(t) = b\left(y^{x,\alpha}(t),\alpha(t)\right) \qquad \text { for } t >0,\\
\;\;\; y^{x,\alpha}(0) = x.
\end{cases}
\end{equation*}
We denote the set of controls (strategies) $\al$ where $y^{x,\alpha}(t)\in \overline{\Omega}$ for all $t\geq 0$ and $y^{x,\alpha}$ solves the ODE above by $\mathcal{A}_x$. The value function is defined by
\begin{equation*}
u(x) = \inf_{\alpha \in \mathcal{A}_x} \int_0^\infty e^{-t} f\big(y^{x,\alpha}(t),\alpha(t)\big)\;dt.
\end{equation*}
Here, one can define the Hamiltonian associated with $b$ and $f$ as
\begin{equation*}
H(x,p) := \sup_{a\in U} \left\lbrace -b(x,a)\cdot p - f(x,a)\right\rbrace \in \mathrm{C}\left(\overline{\Omega}\times \mathbb{R}^n;\mathbb{R}\right).
\end{equation*}
It was proved in \cite{Soner1986} that $u$ is a viscosity solution to \eqref{state-def}.

\begin{prop}\label{ex33} Let $n=1$ and $U = [-1,1]$. Let us consider the following Hamiltonian defined as
\begin{equation*}
\displaystyle H(x,p) = \sup_{a \in [-1,1]} \Big\lbrace -a\cdot p -e^{-|x|} \Big\rbrace = |p| - e^{-|x|}, \qquad (x,p)\in \mathbb{R}\times\mathbb{R}.
\end{equation*}
Then, the solution to \eqref{u_k} is given by $u_k(x) = \frac{e^{-|x|}}{2} + \frac{e^{|x|-2k}}{2}$ for $x\in [-k,k]$, while the solution to \eqref{u_eqn} is $u(x) = \frac{e^{-|x|}}{2}$. Hence, the exponential rate of convergence is obtained.
\end{prop}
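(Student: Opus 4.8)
The plan is to bypass the optimal control representation and simply verify, by direct computation, that the two closed-form functions solve the corresponding state-constraint problems in the viscosity sense; uniqueness (Theorem~\ref{CP continuous} on $[-k,k]$, and the comparison principle on $\R$) then identifies them with $u_k$ and $u$, and the exponential rate drops out by subtraction.

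For the bounded problem, put $u_k(x)=\tfrac12\big(e^{-|x|}+e^{|x|-2k}\big)$ on $[-k,k]$. On $(-k,k)\setminus\{0\}$ this is $\mathrm{C}^1$, and for $x\in(0,k)$ one has $u_k'(x)=\tfrac12\big(e^{x-2k}-e^{-x}\big)<0$ because $x<k$; hence $|u_k'(x)|=\tfrac12\big(e^{-x}-e^{x-2k}\big)$ and substituting gives $u_k(x)+|u_k'(x)|-e^{-x}=0$, the case $x\in(-k,0)$ being symmetric. At $x=0$ the function has a concave corner — a local maximum with $u_k'(0^-)=\tfrac12(1-e^{-2k})>0>u_k'(0^+)$ — so $D^-u_k(0)=\emptyset$ and the supersolution test is vacuous, whereas for every $p\in D^+u_k(0)=\big[\tfrac12(e^{-2k}-1),\tfrac12(1-e^{-2k})\big]$ we get $u_k(0)+|p|-1\leq\tfrac12(1+e^{-2k})+\tfrac12(1-e^{-2k})-1=0$, so the subsolution test holds. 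At the endpoints $u_k(\pm k)=e^{-k}$ and $u_k'(\pm k)=0$; if $u_k-\varphi$ has a local minimum over $[-k,k]$ at $x=k$ then the one-sided condition forces $\varphi'(k)\geq u_k'(k)=0$, so $u_k(k)+H(k,\varphi'(k))=|\varphi'(k)|\geq0$, and symmetrically at $x=-k$. Since $|H(x,p)-H(y,p)|=\big|e^{-|y|}-e^{-|x|}\big|\leq|x-y|$ and $|H(x,p)-H(x,q)|\leq|p-q|$, assumption \eqref{H3a} holds and $[-k,k]$ satisfies \eqref{A1}; thus Theorem~\ref{CP continuous} shows that the function above is \emph{the} state-constraint solution of \eqref{u_k}.

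Running the same computation with $k$ replaced by $\infty$ shows that $u(x)=\tfrac12 e^{-|x|}$ solves \eqref{u_eqn} on $\R$ in the viscosity sense (classically off the origin, with the same type of concave corner at $0$), and by the comparison principle on $\R^n$ — or, alternatively, by the theorem identifying $u$ with the value function \eqref{rep_for} — it is the unique solution. Subtracting, $0\leq u_k(x)-u(x)=\tfrac12 e^{|x|-2k}=\big(\tfrac12 e^{|x|}\big)e^{-2k}$ for all $x\in[-k,k]$, which is precisely of the form $\big(Ce^{|x|/C}\big)e^{-k/C}$ predicted by Theorem~\ref{thm:exp-rate}; since this difference decays exactly exponentially in $k$, no polynomial (or sub-exponential) rate can hold in general, so the exponential rate is optimal.

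The computations are entirely routine; the only steps requiring any care are the supersolution verification at the concave corner $x=0$, where one exploits that $D^-u_k(0)$ is empty, and at the constrained endpoints $x=\pm k$, where the sign of $\varphi'$ forced by the local-minimum condition makes the supersolution inequality trivial. I do not anticipate a genuine obstacle — the whole point of the example is simply that it furnishes a transparent closed form whose error $u_k-u$ realizes the exponential rate of Theorem~\ref{thm:exp-rate}.
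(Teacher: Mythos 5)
Your verification is correct, but it takes a genuinely different route from the paper's. The paper lives inside the optimal control framework it just set up: it reads off $b(x,a)=a$, $f(x,a)=e^{-|x|}$, $U=[-1,1]$, guesses the optimal controls (move away from the origin at unit speed until you hit $\pm k$, then stop), and integrates $\int_0^\infty e^{-s-|y(s)|}\,ds$ along those trajectories to obtain the closed forms for $u_k$ and $u$. That argument is short but leans on an unproved assertion that those controls are optimal (the paper says ``it is easy to see''), and its point is precisely to illustrate the control-theoretic picture --- in particular that the constrained minimizer sits on the boundary after finite time. Your proof instead verifies the candidate formulas directly against the viscosity definitions (classical off $x=0$, the concave-corner case via $D^-u_k(0)=\emptyset$ and the superdifferential bound, the supersolution test at $x=\pm k$ where $u_k(\pm k)+H(\pm k,p)=|p|\ge 0$ holds for every $p$), and then invokes Theorem~\ref{CP continuous} together with \eqref{H3a} and \eqref{A1} to pin down uniqueness. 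What you gain is a fully self-contained PDE verification with no appeal to optimality of a guessed control; what you give up is the control-theoretic explanation of \emph{why} the formula looks the way it does, which is part of what the paper is trying to convey in this subsection. One small remark: at $x=\pm k$ the inequality $\varphi'(k)\ge u_k'(k^-)=0$ you derive is not actually needed, since $u_k(\pm k)+H(\pm k,p)=|p|\ge 0$ for all $p$; and your closing sentence about ``no polynomial rate'' should be read as ``no rate faster than exponential,'' i.e.\ the example shows that the bound $C e^{|x|/C}e^{-k/C}$ of Theorem~\ref{thm:exp-rate} cannot be improved to a super-exponential one.
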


\begin{proof} In the optimal control setting, the Hamiltonian above is obtained by considering $U=[-1,1]$, $b(x,a) = a$ and $f(x,a) = e^{-|x|}$. To find $u_k(x_0)$ and $u(x_0)$, one needs to find a control $\alpha(t)$ that minimizes
\begin{equation*}
\int_0^\infty e^{-s-|y(s)|}\;ds\qquad\text{subject to}\qquad \begin{cases}
\dot{y}(t) &= \alpha(t) \in [-1,1],\\
y(0) &= x_0.
\end{cases}
\end{equation*}
It is easy to see the following points:
\begin{itemize}
\item[$\mathrm{(i)}$] An optimal control for the unconstrained problem with $x_0\geq 0$ ($x_0<0$)  is $\alpha(t) \equiv 1$ ($\alpha(t) \equiv -1$, respectively).
\item[$\mathrm{(ii)}$] An optimal control for the constrained problem on $[-k,k]$ with $x_0\geq 0$ ($x_0<0$) is $\alpha(t) \equiv 1$ on $[0,k-x_0]$ and 0 elsewhere ($\alpha(t) \equiv -1$ on $[0,k+x_0]$ and 0 elsewhere, respectively).
\end{itemize}
Once we have the optimal controls, we can easily compute the value function and the result follows. In conclusion, for all $x\in [-k,k]$ we have
\begin{equation*}
0\leq u_k(x)-u(x)= \left(\frac{e^{|x|}}{2}\right)e^{-2k}.
\end{equation*}
In this example, the convergence holds everywhere in $[-k,k]$ with the rate $\mathcal{O}\left(e^{-k}\right)$.
\end{proof}

\begin{rem} 
One interesting fact to point out here is that the optimal path starting from $x_0$ for the state-constraint problem on $[-k,k]$ in Proposition \ref{ex33}
stays on the boundary $\pm k$ for all $t \geq  k - |x_0|$.
\end{rem}

\section{The case of bounded domain}\label{P2-bdd}
The second prototype case is considered in this section. 
Let us assume that (P2), $\eqref{H0}, \eqref{H1}$, $\eqref{H3c}$ and \eqref{H4} are enforced. 
Recall that $\Omega_k = B\left(0,1-\frac{1}{k}\right)$ for $k \in \N$, and $\Omega = B(0,1)$. 
Let $u_k\in \mathrm{Lip}\left(\overline{\Omega}_k\right)$ be the unique viscosity solution to
\begin{equation}\label{u_k-bdd}
\begin{cases}
u_k(x) + H(x,Du_k(x)) &\leq 0 \qquad\text{in}\;\Omega_k,\\
u_k(x) + H(x,Du_k(x)) &\geq 0 \qquad\text{on}\;\overline{\Omega}_k.
\end{cases}
\end{equation}
It is clear that we still have the following priori estimate
\begin{equation}\label{u_k-bdd-estimate}
\Vert u_k\Vert_{L^\infty(\overline{\Omega}_k)} + \Vert Du_k\Vert_{L^\infty(\overline{\Omega}_k)} \leq C_H.
\end{equation}

\begin{prop}\label{P2-conv} For each $k\in \mathbb{N}$, let $u_k$ be the unique solution to \eqref{u_k-bdd}. Then, there exists $u \in BUC (\overline{\Omega})$ such that $u_k\rightarrow u$ locally  uniformly on $\overline{\Omega}$ as $k$ grows to infinity. Moreover, $u$ has the same bounds as in \eqref{u_k-bdd-estimate} and solves
\begin{equation}\label{u-bdd}
\begin{cases}
u(x) + H(x,Du(x)) &\leq 0 \qquad\text{in}\; \Omega,\\
u(x) + H(x,Du(x)) &\geq 0 \qquad\text{on}\; \overline{\Omega}
\end{cases}
\end{equation}
in viscosity sense.
\end{prop}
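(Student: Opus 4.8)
The plan is to run the standard compactness/stability scheme, but with one crucial deviation: the supersolution property of the limit \emph{at} the boundary $\partial B(0,1)$ cannot be obtained by passing the supersolution test to the limit, since none of the approximating domains $\overline{\Omega}_k$ touches $\partial B(0,1)$. Instead I would identify $u$ as the maximal subsolution on $\overline{B(0,1)}$ and invoke Corollary \ref{cor:Inv_max}. Uniqueness for the limit problem then comes from Theorem \ref{CP continuous}.

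First I would fix a common compact set. By \eqref{u_k-bdd-estimate}, each $u_k$ is bounded by $C_H$ and $C_H$-Lipschitz on $\overline{\Omega}_k$; extend it to a $C_H$-Lipschitz function $\tilde u_k$ on the fixed compact set $\overline{B(0,1)}$ with $\|\tilde u_k\|_{L^\infty}\le C_H$, for instance by the McShane formula $\tilde u_k(x)=\inf_{y\in\overline{\Omega}_k}\big(u_k(y)+C_H|x-y|\big)$. The family $\{\tilde u_k\}$ is uniformly bounded and equi-Lipschitz on $\overline{B(0,1)}$, so by Arzel\`a--Ascoli there are a subsequence $\{k_j\}$ and $u\in\mathrm{Lip}\big(\overline{B(0,1)}\big)\subset\mathrm{BUC}(\overline{\Omega})$, obeying the same bounds as in \eqref{u_k-bdd-estimate}, with $\tilde u_{k_j}\to u$ uniformly on $\overline{B(0,1)}$; in particular $u_{k_j}\to u$ uniformly on every compact subset of $B(0,1)$.

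Next I would check that $u$ solves \eqref{u-bdd}. That $u$ is a viscosity subsolution of \eqref{HJ-static} in $B(0,1)$ follows from the usual stability argument: for $x_0\in B(0,1)$ and $\varphi\in C^1$ with $u-\varphi$ having a strict local maximum at $x_0$, for large $j$ one has $x_0\in\Omega_{k_j}$ and, by uniform convergence, a local maximum point $x_j\to x_0$ of $u_{k_j}-\varphi$ in $\Omega_{k_j}$; the subsolution inequality for $u_{k_j}$ and continuity of $H$ give $u(x_0)+H(x_0,D\varphi(x_0))\le 0$. Then I would show $u$ dominates every subsolution: if $v\in C(\overline{B(0,1)})$ satisfies $v+H(x,Dv)\le 0$ in $B(0,1)$, then for each fixed $k$ the restriction of $v$ is a subsolution in the ball $\Omega_k=B(0,1-\tfrac{1}{k})$, which satisfies \eqref{A1}, while $u_k$ is a Lipschitz supersolution on $\overline{\Omega}_k$ and \eqref{H3c} implies \eqref{H3b}; hence Theorem \ref{CP continuous} yields $v\le u_k$ on $\overline{\Omega}_k$, and letting $k=k_j\to\infty$ gives $v\le u$ on $B(0,1)$, hence on $\overline{B(0,1)}$ by continuity. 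Now Corollary \ref{cor:Inv_max} applies and shows $u$ is a viscosity supersolution of \eqref{HJ-static} on $\overline{B(0,1)}$, so $u$ solves \eqref{u-bdd}. Finally, since $B(0,1)$ satisfies \eqref{A1} and $u$ is Lipschitz, Theorem \ref{CP continuous} gives uniqueness for \eqref{u-bdd}; therefore the limit $u$ is independent of the subsequence, and the standard ``every subsequence has a further subsequence converging to $u$'' argument upgrades this to locally uniform convergence of the full sequence $u_k\to u$ on $\overline{B(0,1)}$.

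The hard part will be the supersolution property at points of $\partial B(0,1)$: a direct stability argument is unavailable there because the constrained sets $\overline{\Omega}_k$ stay strictly inside $B(0,1)$, so there is nothing to pass to the limit. The device that resolves this is the maximal-subsolution characterization in Corollary \ref{cor:Inv_max}, which in turn relies on having the comparison principle for each fixed $\Omega_k$; this is exactly where it matters that every $\Omega_k$ is a ball (hence satisfies \eqref{A1}) and that every $u_k$ is Lipschitz, so that the version of Theorem \ref{CP continuous} under \eqref{H3b} is applicable.
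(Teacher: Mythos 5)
Your proposal is correct and follows essentially the same route as the paper: both use Arzel\`a--Ascoli and stability for the subsolution property in $B(0,1)$, and both handle the supersolution test on $\overline{B(0,1)}$ (where direct stability fails) by first showing $v\le u_k$ on $\overline{\Omega}_k$ via the comparison principle Theorem \ref{CP continuous}, passing to the limit, and then invoking the maximal-subsolution characterization of Corollary \ref{cor:Inv_max}. Your McShane extension to $\overline{B(0,1)}$ and the explicit subsequence-uniqueness argument for full-sequence convergence are cosmetic refinements of the paper's nested-$\overline{B(0,r)}$ approach, not a different method.
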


\begin{proof} From a priori estimate \eqref{u_k-bdd-estimate}, by Arzel\`a-Ascoli's theorem and a diagonal argument we can extract a subsequence such that $u_{k_m}\rightarrow u$ uniformly on compact subsets of $\Omega$. By the stability of viscosity solutions we obtain that $u\in \mathrm{C}(\Omega)$ is a viscosity solution to
\begin{equation}\label{bdd_B(0,1)}
u(x) + H(x,Du(x)) = 0\qquad\text{in}\; \Omega.
\end{equation}
We deduce that $|u(x)|\leq C_B$ and $|u(x)-u(y)| \leq C_H|x-y|$ for $x,y\in \Omega$. We can extend $u\in \mathrm{Lip}(\overline{\Omega})$ with the same priori bound as in \eqref{u_k-bdd-estimate}. We need to show that $u$ is a viscosity supersolution to $u(x) + H(x,Du(x)) =  0$ on $\overline{\Omega}$. 

We can verify it using Corollary \ref{cor:Inv_max}. Indeed, let $v\in \mathrm{C}(\overline{\Omega})$ be a viscosity subsolution to \eqref{bdd_B(0,1)} in $\Omega$. Applying the comparison principle to $u_k(x) + H(x,Du_k(x)) \geq 0$ on $\overline{\Omega}_k$, we have that $v(x) \leq u_k(x)$ for $x\in \overline{\Omega}_k$. Now fixing $r\in (0,1)$, we have $v(x) \leq u_k(x)$ for all $x\in \overline{B(0,r)}$ and $r \leq 1-\frac{1}{k}$ if $k$ is large enough. Letting $k\rightarrow \infty$, we deduce that $v(x)\leq u(x)$ for $x\in \overline{B(0,r)}$. Since we have $u,v\in \mathrm{C}(\overline{B(0,1)})$, the inequality $v\leq u$ on $\overline{B(0,1)}$ follows. Hence, $u$ is a viscosity supersolution to \eqref{bdd_B(0,1)} by Corollary \ref{cor:Inv_max}.
\end{proof}

Now we are ready to give a proof for Theorem \ref{Conv-bdd2}. 
We note that star-shaped and scaling properties of $\{\Omega_k\}$ play an important role.

\begin{proof}[Proof of Theorem \ref{Conv-bdd2}] The fact that $u_k(x)\geq u(x)$ on $\overline{\Omega}_k$ is clear by the comparison principle. For $k\geq 2$, let us define 
\begin{equation*}
\tilde{u}_k(x):= \frac{k}{k-1}u_k\left(\frac{k-1}{k}x\right)\qquad \text{for}\;x\in \overline{B(0,1)}.
\end{equation*}
It is clear that $\tilde{u}_k$ is a viscosity subsolution to 
\begin{equation}\label{eq:u_k_scale_1}
\frac{k-1}{k}\tilde{u}_k(x) + H\left(\frac{k-1}{k}x, D\tilde{u}_k(x)\right) = 0 \qquad\text{in }  B(0,1).
\end{equation}
From \eqref{u_k-bdd-estimate} and \eqref{H3c}, there exists $\tilde{C}_H$ such that $|H(x,p) - H(x,p)|\leq \tilde{C}_H|x-y|$ for all $x,y\in \overline{\Omega}$ and $|p| \leq C_H$. Therefore, by using \eqref{eq:u_k_scale_1} we have
\begin{align*}
\tilde{u}_k(x) + H\left(x,D\tilde{u}_k(x)\right) &\leq \frac{1}{k}\tilde{u}_k(x) +H\left(x,D\tilde{u}_k(x)\right) -  H\left(\frac{k-1}{k}x,D\tilde{u}_k(x)\right)\leq \frac{C_H+\tilde{C}_H}{k}
\end{align*}
for all $x\in B(0,1)$. By the comparison principle and the fact that $u$ solves \eqref{u-bdd} in the viscosity sense, we deduce that
\begin{equation*}
\tilde{u}_k(x)-\frac{C_H+\tilde{C}_H}{k} \leq u(x) \qquad\text{for all}\; x\in \overline{B(0,1)}.
\end{equation*}
Consequently, we obtain the conclusion $u_k(x) \leq u(x) + \frac{C}{k}$ for $x\in \overline{\Omega}_k$ where the constant $C$ can be chosen as $C = 2C_H+\tilde{C}_H$.
\end{proof}

\begin{rem}\label{rem:P2} \quad
\begin{itemize}
\item[(i)] It is clear from the proof that prototype condition (P2) can be relaxed as following. \medskip
\begin{itemize}
\item[(P2')] Assume $0 \in \Omega_k$ for all $k \in \N$, $\Omega = \bigcup_{k \in \N} \Omega_k$ is bounded, and the comparison principle for the state-constraint problem holds on $\Omega_k,\Omega$.
Assume further that, for $k \in \N$,
\[
\left(1-\frac{1}{k}\right)\Omega \subset \Omega_k.
\]
\end{itemize}
\item[(ii)] Theorem \ref{Conv-bdd2} can also be proved using the doubling variable method with the following auxiliary function (see \cite{Capuzzo-Dolcetta1990})
\begin{equation*}
\Phi^{k}(x,y): = \tfrac{k+1}{k-1} u_k\left(\tfrac{k-1}{k+1}x\right) - u(y) - C_H k^{2}|x-y|^2
\end{equation*}
for $(x,y)\in \left(1+\tfrac{1}{k}\right)\overline{\Omega}\times \overline{\Omega}$.
\end{itemize}
\end{rem}

The following remark shows that the rate $\mathcal{O}\left(\frac{1}{k}\right)$ is indeed optimal.

\begin{rem} \label{rem: optimal for bounded domain}
Let $H$ be defined as in Proposition \ref{example1}, we see that $u_k(x) = e^{x-\left(1-\frac{1}{k}\right)}$ solves \eqref{u_k-bdd} and $u(x) = e^{x-1}$ solves \eqref{u-bdd}, therefore
\begin{equation*}
0\leq u_k(x) -u(x) =e^{x-1}\left(e^{\frac{1}{k}}-1\right)\leq \frac{2}{k}
\end{equation*}
for $x\in \left[-\left(1-\frac{1}{k}\right),1-\frac{1}{k}\right]$. Besides, $e^{\frac{1}{k}} - 1 \geq \frac{1}{k}$, and so, $\mathcal{O}\left(\frac{1}{k}\right)$ is optimal.
\end{rem}

\section{Discussions}\label{Sec:discussion}
We give here some further discussions along the line with the topics considered in the paper.
Firstly, when our Hamiltonian is given as $H(x,p)=a(x)K(p)$ in the first prototype (P1), we get an exponential rate of convergence provided that the assumption \eqref{H0} is enforced (Theorem \ref{npo}). 
Without this assumption, we have an example with a polynomial rate of convergence whose power can be increased or decreased as much as we want. 

\begin{ex} Let us consider $n=1$, $H(x,p) = \left(\frac{1+|x|}{m}\right)K(p)$ for $m>1$ and $K:\mathbb{R}\rightarrow\mathbb{R}$ defined by 
\begin{equation}\label{eqn:ex5}
K(p)= \begin{cases}
-|p| & \text{for}\; |p|\leq 1,\\
|p| - 2 & \text{for}\; |p|\geq 1.
\end{cases}
\end{equation}
The unique viscosity solution to \eqref{u_k} is
\begin{equation*}
u_k(x) = \frac{(1+|x|)^m}{m(1+k)^{m-1}} \qquad\text{for}\; x\in [-k,k].
\end{equation*}
Clearly, $u_k(x)\rightarrow 0$ locally uniformly with rate $\mathcal{O}\left(\frac{1}{k^{{m-1}}}\right)$ for any given $m>1$. 
We should note that the limit $0$ is not a unique solution to \eqref{u_eqn}. 
Another solution to  \eqref{u_eqn} is $u(x) =m^{-1}(1+|x|)^m$, but it does not belong to $\BUC(\R)$.
\end{ex}

\begin{ex}\label{ex22} 
Assume $n=1$, $H(x,p)=K(p) + V(x)$ where $V(x) = e^{-|x|}$ and $K:\mathbb{R}\rightarrow\mathbb{R}$ defined by
\begin{equation*}
K(p)= \begin{cases}
-|p| &\quad \text{for}\; |p|\leq 1,\\
|p|-2&\quad \text{for}\; |p|\geq 1.
\end{cases}
\end{equation*}
The unique state-constraint viscosity solution to \eqref{u_k} is
\begin{equation*}
\displaystyle u_k(x) = -\frac{1}{2}e^{-|x|} + \left(e^{-k}-\frac{1}{2}e^{-2k}\right)e^{|x|}, \qquad x\in [-k,k],
\end{equation*}
and the unique viscosity solution to \eqref{u_eqn} is
\begin{equation*}
u(x) = -\frac{1}{2}e^{-|x|}, \qquad x\in \mathbb{R}.
\end{equation*}
We have $u_k\rightarrow u$ locally uniformly in $\mathbb{R}$ with rate $\mathcal{O}(e^{-k})$.
\end{ex}





Secondly, prototype condition (P1) can be relaxed as follows.
\begin{rem}\label{rem:P1} \quad
It is clear from the proofs of our main results (Theorems \ref{thm:approx1/k^2}, \ref{npo}, \ref{thm:exp-rate}, and  Corollary \ref{cor1}) that prototype condition (P1) can be relaxed as following. \medskip
\begin{itemize}
\item[(P1')] Assume $\Omega_k$ is bounded, $B(0,k) \subset \Omega_k$, and the comparison principle for the state-constraint problem holds on $\Omega_k$ for all $k \in \N$.
Of course, $\Omega = \bigcup_{k \in \N} \Omega_k = \R^n$ here.
\end{itemize}

\end{rem}

Thirdly, there are some open questions we are not able to answer yet.

\begin{quest} \label{Q1}
In the first prototype {\rm(P1)} case, what is the optimal rate of convergence of $u_k$ to $u$ in the general nonconvex setting?
\end{quest}

\noindent A more specific question is as follows.

\begin{quest}\label{Q2}
Assume {\rm(P1)}, and $H(x,p) = K(p)+V(x)$, where $K \in \Lip(\R^n)$ is coercive and nonconvex, and $V\in\mathrm{BUC}(\mathbb{R}^n)$.
Is it true that we always have an exponential rate of convergence of $u_k$ to $u$?
\end{quest}

\section{Appendix} \label{appendix}

\subsection{Proofs of some lemmas}

\begin{proof}[Proof of Lemma \ref{growth-L'-lem}] We first prove the result for all $(x,v)$ with $|v|\leq 1$, then by scaling we get the result for all $(x,v)$. Using \eqref{L7} we have $-B \leq L(x,v) \leq 4A+B$ for all $(x,v)$ with $|v|\leq 2$. For $u,v\in B(0,1)$ with $u\neq v$, let $w = v + |v-u|^{-1}(v-u)$. Then, $|w|< |v| + 1 < 2$. Let $\lambda = (1+|u-v|)^{-1}\in (0,1)$, we have $v = \lambda u + (1-\lambda)w$. By the convexity, one obtains
\begin{align*}
L(x,v) - L(x,u) &\leq (1-\lambda)\big(L(x,w) - L(x,u)\big) \leq (4A+2B)|u-v|.
\end{align*}
By symmetry, we deduce that $|L(x,u) - L(x,v)| \leq (4A+2B)|u-v|$ for all $(x,v)$ with $|v|\leq 1$. In other words, we have that $|\xi| \leq 4A+2B$ whenever $\xi \in D^-_v L(x,v)$ for $(x,v)\in \mathbb{R}^n\times \overline{B(0,1)}$. Now for $r>1$, we define $L_r(x,v) = r^{-2}L(x,rv)$ for $(x,v)\in \mathbb{R}^n \times \mathbb{R}^n$. We observe that
\begin{equation*}
A^{-1}|v|^2 - B\leq A^{-1}|v|^2-B r^{-2}\leq   L_r(x,v)
\leq A|v|^2 + B r^{-2}\leq A|v|^2+B
\end{equation*}
for all $(x,v)$. For $v\in \mathbb{R}^n$ with $|v|\geq 1$, let $r=2|v|>1$ and $u = \frac{v}{2|v|} \in B(0,1)$ so that $v = ru$. Since $\xi\in D_v^-L(x,v)$ implies $\frac{\xi}{r}\in D_u^-L_r\left(x,u\right)$, we have $\left|\xi\right| \leq (4A+2B)|r| = (8A+4B)|v|$.
\end{proof}

\begin{proof}[Proof of Theorem \ref{Existence of minimizer}] Let $\{\eta_k\}_{k=1}^\infty \subset\mathcal{A}_{x}$ be a minimizing sequence in $\mathrm{AC}([0,\infty))$ such that $\lim_{k\rightarrow\infty} J\left[x,\eta_k\right] = u(x)$. From the uniform boundedness of $u$ and the quadratic bounds of $L(x,v)$, we have
\begin{equation*}
\left\Vert e^{-\frac{s}{2}}\dot{\eta}_k(s) \right\Vert_{L^2\left((0,\infty);\mathbb{R}^n\right)} \leq C_4.
\end{equation*}
Here, $C_4$ can be chosen as $(A(2C_H+B))^\frac{1}{2}$. By the weak compactness of $L^2$, there exists $g$ such that $e^{-\frac{s}{2}}g(s) \in L^2\big((0,\infty);\mathbb{R}^n\big)$ and a subsequence $\{k_j\}$ $\rightarrow \infty$ such that $e^{-\frac{s}{2}}\dot{\eta}_{k_j} \rightharpoonup e^{-\frac{s}{2}}g$ weakly in $L^2\big((0,\infty);\mathbb{R}^n\big)$ as $j\rightarrow\infty$. 
\medskip

Writing $g$ as $e^{\frac{s}{2}}g \cdot e^{-\frac{s}{2}}$ and using the Cauchy-Schwartz inequality, we get $g\in L^1_{\mathrm{loc}}\big((0,\infty);\mathbb{R}^n\big)$. For $t>0$, we let $\eta(t) = x+\int_0^t g(s)\;ds$. Clearly, $\eta \in \mathcal{A}_{x}$ and one obtains that $\eta_{k_j}\rightarrow \eta$ pointwise with $\dot{\eta}=g$ almost everywhere. On the other hand, the convexity of $L$ implies
\begin{equation*}
L\left(\eta_{k_j}(s),-\dot{\eta}_{k_j}(s)\right) \geq L\left(\eta_{k_j}(s),-\dot{\eta}(s)\right) - D_vL\left(\eta_{k_j}(s),-\dot{\eta}(s)\right)\cdot \left(\dot{\eta}_{k_j}(s) - \dot{\eta}(s)\right).
\end{equation*}
Therefore, 
\begin{align*}
\int_0^\infty e^{-s} L\left(\eta_{k_j}(s),-\dot{\eta}_{k_j}(s)\right)ds &\geq \int_0^\infty e^{-s} L\left(\eta_{k_j}(s),-\dot{\eta}(s)\right)ds \nonumber\\
&\qquad  + \int_0^\infty e^{-s/2}D_vL\left(\eta_{k_j}(s),-\dot{\eta}(s)\right)\cdot e^{-s/2}\left(\dot{\eta}_{k_j}(s) - \dot{\eta}(s)\right)ds. 
\end{align*}
Since $|D_vL\left(\eta_{k_j}(s),-\dot{\eta}(s)\right)| \leq C_L(1+|\dot{\eta}(s)|)$ for a.e. $s\in (0,\infty)$, it is clear that 
\begin{equation*}
e^{-s/2}D_vL\left(\eta_{k_j}(s),-\dot{\eta}(s)\right) \rightarrow e^{-s/2}D_vL\left(\eta(s),-\dot{\eta}(s)\right)
\end{equation*}
in $L^2((0,\infty);\mathbb{R}^n)$ and thus
\begin{equation*}
\int_0^\infty e^{-s}D_vL\left(\eta_{k_j}(s),-\dot{\eta}(s)\right)\cdot\left(\dot{\eta}_{k_j}(s) - \dot{\eta}(s)\right)ds
\end{equation*}
converges to 0 as $k$ goes to infinity, which yields that $J[x,\eta] \leq u(x)$. Hence $J[x,\eta] = u(x)$.
\end{proof}

\begin{proof}[Proof of Lemma \ref{DPP_lemma}] By the definition of $u$ in \eqref{def.u}, we have 
\begin{equation*}
u\big(\eta(t)\big) \leq \int_0^\infty  e^{-s} L\big(\gamma(s),-\dot{\gamma}(s)\big)\;ds = e^t \int_t^\infty e^{-\xi}L\left(\eta(\xi),-\dot{\eta}(\xi)\right)\;d\xi,
\end{equation*}
where $\gamma(s) = \eta(t+s)$ for $s\geq 0$. Thus,
\begin{equation}\label{char_3}
e^{-t}u\big(\eta(t)\big) \leq \int_t^\infty e^{-\xi}L\left(\eta(\xi),-\dot{\eta}(\xi)\right)\;d\xi.
\end{equation}
By the dynamic programming principle and \eqref{char_3}, we have
\begin{align*}
u\big(\eta(0)\big) &\leq \int_0^t  e^{-s}L \big(\eta(s),-\dot{\eta}(s)\big)\;ds  + e^{-t}u\big(\eta(t)\big) \leq \int_0^\infty  e^{-s} L\big(\eta(s),-\dot{\eta}(s)\big)\;ds =  u\big(\eta(0)\big).
\end{align*}
Therefore, \eqref{DPP_lem}, \eqref{char_4} and \eqref{char_5} follow.
\end{proof}

\begin{proof}[Proof of Lemma \ref{bound_on_doteta}] For every $t,h>0$, by Lemma \ref{DPP_lemma} we have that
\begin{equation*}
\frac{e^{-t}u\big(\eta(t)\big) - e^{-(t+h)}u\big(\eta(t+h)\big)}{h}  = \frac{1}{h}\int_t^{t+h}  e^{-s} L\big(\eta(s),-\dot{\eta}(s)\big)\;ds .
\end{equation*}
Let $\varphi\in \mathrm{C}^1(\mathbb{R})$ such that $u-\varphi$ has a local min at $\eta(t)$ and $u\big(\eta(t)\big) = \varphi\big(\eta(t)\big)$, then
\begin{equation*}
\frac{e^{-t}u\big(\eta(t)\big) - e^{-(t+h)}u\big(\eta(t+h)\big)}{h} \leq \frac{e^{-t}\varphi\big(\eta(t)\big) - e^{-(t+h)}\varphi\big(\eta(t+h)\big)}{h}.
\end{equation*}
Therefore,
\begin{equation*}
\frac{1}{h}\int_t^{t+h}  e^{-s} L\Big(\eta(s),-\dot{\eta}(s)\Big)\;ds  \leq \frac{e^{-t}\varphi\big(\eta(t)\big) - e^{-(t+h)}\varphi\big(\eta(t+h)\big)}{h}.
\end{equation*}
Since $\eta(t)$ is differentiable a.e. in $(0,\infty)$, at those $t$ where $\eta(t)$ is differentiable, let $h\rightarrow 0^+$ we deduce that
\begin{equation*}
e^{-t}L\big(\eta(t),-\dot{\eta}(t)\big) \leq -\frac{d}{dt}\Big(e^{-t}\varphi\big(\eta(t)\big)\Big) = e^{-t}\varphi\big(\eta(t)\big) - e^{-t}D\varphi\big(\eta(t)\big)\cdot\dot{\eta}(t).
\end{equation*}
Thus, for a.e. $t>0$ where $\eta$ is differentiable, we have
\begin{equation*}
L\big(\eta(t),-\dot{\eta}(t)\big) \leq \varphi\big(\eta(t)\big) - D\varphi\big(\eta(t)\big)\cdot\dot{\eta}(t).
\end{equation*}
By \eqref{L7} and  a priori estimate \eqref{est.u} for a.e. $t\in (0,\infty)$ we have that 
\begin{equation*}
A^{-1}|\dot{\eta}(t)|^2 - B \leq \varphi\big(\eta(t)\big) - D\varphi\big(\eta(t)\big)\cdot\dot{\eta}(t) \leq C_H+C_H|\dot{\eta}(t)|.
\end{equation*}
This shows that $|\dot{\eta}(t)|\leq C_5$ for a.e. $t\in (0,\infty)$, and $C_5$ only depends on $C_H,A,B$. 
\end{proof}

It is worth emphasizing again here that the bound $C_5$ on the velocity of $\eta$ only depends on $C_H, A, B$, which can be seen clearly from the last chain of inequalities in the above proof.
In fact, one can choose explicitly that $C_5 = (2AB+2AC_H+A^2C_H^2)^{1/2}$.

\subsection{Existence of minimizers in the general case}
We show that one can remove the smoothness of $L$ in Theorem \ref{Existence of minimizer} under the assumption $\mathrm{(L3)}$.

\medskip
Let us consider mollifiers in $\mathbb{R}^{2n}$ defined as $\{\eta_{\varepsilon}\}_{\varepsilon>0}$ such that $\eta_\varepsilon(x) = \frac{1}{\varepsilon^{2n}}\eta\left(\frac{x}{\varepsilon}\right)$ for $x\in \mathbb{R}^{2n}$ where $\eta\in \mathrm{C}_c^\infty(\mathbb{R}^{2n})$ satisfying $0\leq \eta \leq 1$, $\mathrm{supp}\;(\eta) \subset B_{\mathbb{R}^{2n}}(0,1)$ and $\int_{\mathbb{R}^{2n}} \eta(x)\;dx = 1$.

For each $\varepsilon>0$ we define the convolution $L^\varepsilon = \eta_\varepsilon*L \in \mathrm{C}^\infty(\mathbb{R}^n\times\mathbb{R}^n)$. It is easy to see that $L^\varepsilon$ is bounded below, $\mathrm{(L5)}, \mathrm{(L6)}$ are preserved to $L^\varepsilon$ and $\mathrm{(L7)}$ now becomes:
\begin{itemize}
\item[(L7$^\varepsilon$)] There exist positive constants 
$A_\varepsilon, B_\varepsilon$ such that $A^{-1}_\varepsilon - B^{-1}_\varepsilon \leq L^\varepsilon(x,v) \leq A_\varepsilon|v|^2 + B_\varepsilon$ for all $(x,v)\in \mathbb{R}^n\times\mathbb{R}^n$, and $A_\varepsilon\rightarrow A, B_\varepsilon\rightarrow B$ as $\varepsilon\rightarrow 0$.
\end{itemize}
By Theorem \ref{Existence of minimizer}, there exists a minimizer $\gamma_\varepsilon$ in $\mathcal{A}_x$ such that 
\begin{equation*}
u^\varepsilon(x):= \inf_{\zeta\in \mathcal{A}_x} \int_0^\infty e^{-s}L^\varepsilon\big(\zeta(s),-\dot{\zeta}(s)\big)\;ds  = \int_0^\infty e^{-s}L^\varepsilon\big(\gamma_\varepsilon(s),-\dot{\gamma}_\varepsilon(s)\big)\;ds.
\end{equation*}
Let $H^\varepsilon$ be the Legendre transform of $L^\varepsilon$.
Then, we can show that $u^\varepsilon$ is the unique solution to $u^\varepsilon(x)+H^\varepsilon(x,Du^\varepsilon(x)) = 0$ in $\mathbb{R}^n$.  It is easy to see that $H^\varepsilon\rightarrow H$ locally uniformly in $\mathbb{R}^n\times\mathbb{R}^n$, therefore by stability of viscosity solutions, $u^\varepsilon\rightarrow u$ locally uniformly in $\mathbb{R}^n$ as $\varepsilon\rightarrow 0$.
\medskip

We indeed have that $\gamma^\varepsilon$ is smooth according to Remark \ref{remark:Euler-Lagrange}. Furthermore, Theorem \ref{Existence of minimizer} yields that $\Vert e^{-\frac{s}{2}}\dot{\gamma}_\varepsilon(s)\Vert_{L^2} \leq C$ and $|\dot{\gamma}_\varepsilon| \leq C$ pointwise in $(0,\infty)$. Therefore, we can define $\gamma\in \mathcal{A}_x$ such that (up to subsequence) $\gamma_\varepsilon\rightarrow \gamma$ locally uniformly on $[0,\infty)$ and $e^{-\frac{s}{2}}\dot{\gamma}_\varepsilon \rightharpoonup e^{-\frac{s}{2}}\dot{\gamma}$ weakly in $L^2$. Since $L^\varepsilon\rightarrow L$ uniformly on a compact set and $\left\lbrace\ -\dot{\gamma}_\varepsilon(s)\right\rbrace _{\varepsilon>0}$ is bounded, we obtain that
\begin{equation*}
L^\varepsilon\big(\gamma_\varepsilon(s),-\dot{\gamma}_\varepsilon(s)\big) = L\big(\gamma(s),-\dot{\gamma}_\varepsilon(s)\big) + \tilde{\omega}_{C_5}(\varepsilon)+\tilde{\omega}_{C_5}(|\gamma_\varepsilon(s)-\gamma(s)|)
\end{equation*}
using $\mathrm{(L3)}$. Therefore, it suffices to show that
\begin{equation}\label{exis_app1}
\int_0^\infty e^{-s} L(\gamma(s),-\dot{\gamma}(s))\;ds \leq \liminf_{\varepsilon\rightarrow 0}\int_0^\infty e^{-s}L\big(\gamma(s),-\dot{\gamma}_\varepsilon(s)\big)\;ds.
\end{equation}
For simplicity, let $d\mu = e^{-s}ds$ be a probability measure on $[0,\infty)$. It is easy to see that the functional $I:L^2(\mu)\rightarrow \mathbb{R}$ maps $f\mapsto \int_0^\infty L(\gamma(s),f(s))d\mu(s)$ is convex and lower semicontinuous, thus it is also weakly lower semicontinuous. Now since $\dot{\gamma}_\varepsilon \rightharpoonup \dot{\gamma}$ weakly in $L^2(d\mu)$, we obtain \eqref{exis_app1} and thus $\gamma$ is a minimizer for $u(x)$.

\begin{rem} Inequality \eqref{exis_app1} for the  Cauchy problem (finite time horizon) is proved using a different argument by H. Ishii in \cite{Ishii2008} under more general assumptions. Such inequalities are crucial for the analysis of large time behavior of solutions to the time-dependent problems.
\end{rem}



\bibliography{zzzzlibrary}{}

\begin{thebibliography}{10}

\bibitem{AltaroviciAlbert2013}
{\sc {Altarovici, A.}, {Bokanowski, O.}, and {Zidani, H.}}
\newblock A general {H}amilton--{J}acobi framework for non-linear
  state-constrained control problems.
\newblock {\em ESAIM: COCV 19}, 2 (2013), 337--357.

\bibitem{Alvarez1997}
{\sc Alvarez, O., Lasry, J.-M., and Lions, P.-L.}
\newblock Convex viscosity solutions and state constraints.
\newblock {\em Journal de Mathématiques Pures et Appliquées 76}, 3 (1997),
  265 -- 288.

\bibitem{Armstrong2015a}
{\sc Armstrong, S.~N., and Tran, H.~V.}
\newblock Viscosity solutions of general viscous {H}amilton--{J}acobi
  equations.
\newblock {\em Mathematische Annalen 361}, 3 (Apr 2015), 647--687.

\bibitem{Bardi1997}
{\sc {Bardi}, M., and Capuzzo-Dolcetta, I.}
\newblock {\em Optimal Control and Viscosity Solutions of
  {H}amilton--{J}acobi--{B}ellman Equations}.
\newblock Modern Birkhäuser Classics. Birkhäuser Basel, 1997.

\bibitem{Barles1994}
{\sc Barles, G.}
\newblock {\em Solutions de viscosité des équations de {H}amilton--{J}acobi}.
\newblock Mathématiques et Applications. Springer-Verlag Berlin Heidelberg,
  1994.

\bibitem{BokanowskiOlivier2011}
{\sc {Bokanowski, O.}, {Forcadel, N.}, and {Zidani, H.}}
\newblock Deterministic state-constrained optimal control problems without
  controllability assumptions.
\newblock {\em ESAIM: COCV 17}, 4 (2011), 995--1015.

\bibitem{Camilli1996}
{\sc Camilli, F., and Falcone, M.}
\newblock Approximation of optimal control problems with state constraints:
  Estimates and applications.
\newblock In {\em Nonsmooth Analysis and Geometric Methods in Deterministic
  Optimal Control\/} (New York, NY, 1996), B.~S. Mordukhovich and H.~J.
  Sussmann, Eds., Springer New York, pp.~23--57.

\bibitem{Cannarsa1991}
{\sc Cannarsa, P., Gozzi, F., and Soner, H.~M.}
\newblock A boundary-value problem for {H}amilton--{J}acobi equations in
  {H}ilbert spaces.
\newblock {\em Applied Mathematics and Optimization 24}, 1 (Jul 1991),
  197--220.

\bibitem{PiermarcoCannarsa2004}
{\sc {Cannarsa, P.}, and {Sinestrari, C.}}
\newblock {\em Semiconcave Functions, {H}amilton--{J}acobi Equations, and
  Optimal Control}.
\newblock Birkhäuser Boston, Boston, MA, 2004.

\bibitem{Capuzzo-Dolcetta1990}
{\sc Capuzzo-Dolcetta, I., and Lions, P.-L.}
\newblock {H}amilton--{J}acobi equations with state constraints.
\newblock {\em Transactions of the American Mathematical Society 318}, 2
  (1990), 643--683.

\bibitem{CarlierGuillaume2010}
{\sc {Carlier, G.}, and {Tahraoui, R.}}
\newblock {H}amilton--{J}acobi--{B}ellman equations for the optimal control of
  a state equation with memory.
\newblock {\em ESAIM: COCV 16}, 3 (2010), 744--763.

\bibitem{ClarkeFrancisH.2002}
{\sc {Clarke, F. H.}, {Rifford, L.}, and {Stern, R. J.}}
\newblock Feedback in state constrained optimal control.
\newblock {\em ESAIM: COCV 7\/} (2002), 97--133.

\bibitem{Feldman2007}
{\sc Feldman, M., and McCuan, J.}
\newblock Constructing convex solutions via {P}erron's method.
\newblock {\em Annali Dell'Universita' Di Ferrara 53}, 1 (May 2007), 65--94.

\bibitem{Frankowska2013}
{\sc Frankowska, H., and Mazzola, M.}
\newblock Discontinuous solutions of {H}amilton--{J}acobi--{B}ellman equation
  under state constraints.
\newblock {\em Calculus of Variations and Partial Differential Equations 46}, 3
  (Mar 2013), 725--747.

\bibitem{ishii1987}
{\sc Ishii, H.}
\newblock Perron's method for {H}amilton--{J}acobi equations.
\newblock {\em Duke Math. J. 55}, 2 (06 1987), 369--384.

\bibitem{Ishii2008}
{\sc Ishii, H.}
\newblock Asymptotic solutions for large time of {H}amilton--{J}acobi equations
  in {E}uclidean {$n$} space.
\newblock {\em Ann. Inst. H. Poincar\'{e} Anal. Non Lin\'{e}aire 25}, 2 (2008),
  231--266.

\bibitem{Ishii1996}
{\sc Ishii, H., and Koike, S.}
\newblock A new formulation of state constraint problems for first-order
  {P}{D}{E}s.
\newblock {\em SIAM Journal on Control and Optimization 34}, 2 (1996),
  554--571.

\bibitem{Ishii2017a}
{\sc Ishii, H., Mitake, H., and Tran, H.~V.}
\newblock The vanishing discount problem and viscosity mather measures. part 2:
  Boundary value problems.
\newblock {\em Journal de Mathématiques Pures et Appliquées 108}, 3 (2017),
  261 -- 305.

\bibitem{Kim2000}
{\sc Kim, K.-E.}
\newblock {H}amilton--{J}acobi--{B}ellman equation under states constraints.
\newblock {\em Journal of Mathematical Analysis and Applications 244}, 1
  (2000), 57 -- 76.

\bibitem{Kocan1998}
{\sc Kocan, M., and Soravia, P.}
\newblock A viscosity approach to infinite-dimensional {H}amilton--{J}acobi
  equations arising in optimal control with state constraints.
\newblock {\em SIAM Journal on Control and Optimization 36}, 4 (1998),
  1348--1375.

\bibitem{Lasry1989}
{\sc Lasry, J.~M., and Lions, P.-L.}
\newblock Nonlinear elliptic equations with singular boundary conditions and
  stochastic control with state constraints. {I}. the model problem.
\newblock {\em Mathematische Annalen 283}, 4 (1989), 583--630.

\bibitem{Le2017}
{\sc {Le}, N.~Q., {Mitake}, H., and {Tran}, H.~V.}
\newblock {\em Dynamical and Geometric Aspects of Hamilton-Jacobi and
  Linearized Monge-Ampère Equations}, vol.~2183 of {\em Lecture Notes in
  Mathematics}.
\newblock Springer International Publishing, 2017.

\bibitem{Leonori2008}
{\sc Leonori, T., and Porretta, A.}
\newblock The boundary behavior of blow-up solutions related to a stochastic
  control problem with state constraint.
\newblock {\em SIAM Journal on Mathematical Analysis 39}, 4 (2008), 1295--1327.

\bibitem{Mitake2008}
{\sc Mitake, H.}
\newblock Asymptotic solutions of {H}amilton--{J}acobi equations with state
  constraints.
\newblock {\em Applied Mathematics and Optimization 58}, 3 (Dec 2008),
  393--410.

\bibitem{WilliamH.Sandholm2018}
{\sc Sandholm, W.~H., Tran, H.~V., and Arigapudi, S.}
\newblock {H}amilton--{J}acobi--{B}ellman equations with semilinear costs and
  state constraints, with applications to large deviations.
\newblock {\em preprint\/} (July 2018).

\bibitem{Sedrakyan2016}
{\sc Sedrakyan, H.}
\newblock Stability of solutions to {H}amilton--{J}acobi equations under state
  constraints.
\newblock {\em Journal of Optimization Theory and Applications 168}, 1 (Jan
  2016), 63--91.

\bibitem{Soner1986}
{\sc Soner, H.}
\newblock Optimal control with state-space constraint {I}.
\newblock {\em SIAM Journal on Control and Optimization 24}, 3 (1986),
  552--561.

\bibitem{Soner1986a}
{\sc Soner, H.}
\newblock Optimal control with state-space constraint. {II}.
\newblock {\em SIAM Journal on Control and Optimization 24}, 6 (1986),
  1110--1122.

\end{thebibliography}
\bibliographystyle{acm}

\end{document}